\documentclass[12pt,twoside]{amsart}

\usepackage{hyperref}
\usepackage{amsthm, amsmath, amscd, amssymb,centernot,txfonts}
\usepackage[all]{xy}
\usepackage[left=2cm,top=2.5cm,bottom=3cm,right=3cm]{geometry}

\setlength{\headheight}{15.2pt}


\setlength\parskip{.1in}
\setlength\parindent{0.2in}

\theoremstyle{plain}
\numberwithin{equation}{section}
\newtheorem{theorem}{Theorem}[section]
\newtheorem{proposition}[theorem]{Proposition}
\newtheorem{lemma}[theorem]{Lemma}
\newtheorem{corollary}[theorem]{Corollary}

\newtheorem{remark}[theorem]{Remark}
\newtheorem{example}[theorem]{Example}
\newcommand{\mlkbr}{M_L \otimes K \otimes B^{\otimes r}}
\newcommand{\mlkbrr}{M_L \otimes K \otimes B^{\otimes r+1}}
\newcommand{\mlkb}{M_L \otimes K \otimes B}
\newcommand{\kb}{K \otimes B}
\newcommand{\te}{\otimes}

\begin{document}

\title{Syzygies of surfaces of general type}

\author[P. Banagere]{P. Banagere}
\email{banagere@gmail.com}

\author[Krishna Hanumanthu]{Krishna Hanumanthu}
\address{Department of Mathematics, University of Kansas, Lawrence, KS 66045}
\address{Chennai Mathematical Institute, H1 SIPCOT IT Park, Siruseri, Kelambakkam 603103, India}
\email{krishna@cmi.ac.in}

\subjclass[2010]{13D02, 14C20, 14J29}
\thanks{The second author was partially supported by Robert D. Adams Visiting Assistant Professorship fund at the University of Kansas.}

\maketitle

\begin{abstract}
We prove new results on projective normality, normal presentation and higher syzygies for a surface of general type $X$ embedded by adjoint line bundles $L_r = K + rB$, where  $B$ is a base point free, ample line bundle. Our main results determine the $r$ for which $L_r$ has $N_p$ property. In corollaries, we will relate the bounds on $r$ to the regularity of $B$. Examples in the last section show that several results are optimal. 


\end{abstract}

\section*{Introduction}

The topic of syzygies of algebraic varieties has evoked considerable interest due to, among other things,  the potential interactions it provides between commutative algebra and algebraic geometry. Projective normality and normal presentation of varieties have been studied since the time of Italian geometers. 
Mark Green brought a new perspective by viewing the classical results on projective normality and normal presentation for curves as particular cases of a more general phenomenon: the linearity of syzygies associated to the minimal free resolution of the homogeneous coordinate ring of the variety embedded in a projective space. It is hard in general to write down the full minimal free resolution, but the linearity of the syzygies has attracted attention.
This leads to the notion of $N_p$ property. 

We give a quick introduction to $N_p$ property.  

Let $k$ be an algebraically closed field of characteristic 0. All our varieties are projective, smooth and defined over $k$. 

Let $L$ be a very ample line bundle on a projective variety $X$. Then $L$ determines an embedding of $X$ into the projective space $\mathbb{P}\big{(}H^0(X,L)\big{)}$.  We denote by $S$ the homogeneous coordinate ring of this projective space. Then the {\it section ring} 
$R(L)$ of $L$ is defined as $\bigoplus_{n=0}^{\infty} H^0(X, L^{\otimes n})$ and it is a finitely generated graded $S$-module.  One looks at the minimal graded free resolution of $R(L)$ over $S$:
$$...\rightarrow E_i \rightarrow ... \rightarrow E_2  \rightarrow E_1 \rightarrow E_0 \rightarrow R \rightarrow 0$$
where $E_i  = \bigoplus S(-a_{i,j})$ for all $i \geq 0$ and $a_{i,j}$ are some nonnegative integers.

We say that $L$ has $N_0$ property if $E_0 = S$. This simply means that the embedding determined by $L$ is projectively normal. It is also common to say, in this case, that $L$ is {\it normally generated}. 

$L$ is said to have $N_1$ property if $E_0 = S$ and $a_{1,j} = 2$ for all $j$. In this case, we also say that $L$ is {\it normally presented.}  Geometrically, this means that the embedding is cut out by quadrics. 

For $p \geq 2$, we say that $L$ has $N_p$ property of $E_0 = S$ and $a_{i,j} = i+1$ for all $i = 1, \ldots ,p$. 



Given a very ample line bundle $L$, it is natural to ask for which $p \ge 0$, $L$ has $N_p$ property. This was studied in the case of curves by Green in  \cite{12}. His result shows that if the curve has genus $g$, then $L$ has $N_p$ property if deg($L) \ge 2g+1+p$ (see also \cite{GL}).  This result suggests the possibility of a numerical characterization of $N_p$ property. 

The notion of degree of a line bundle provides a good measure for the positivity of a line bundle on a curve. But, in general, there is no such simple and straight forward measure on surfaces and higher dimensional varieties. A way out is to narrow down the linear systems to adjoint linear systems, that is, linear systems of the form $K\te B^{\te r}$, where $K=K_X$ is a canonical divisor of $X$ and $B$ is an ample line bundle on $X$. \textit{Mukai's conjecture} deals with precisely such line bundles and asks for the linearity of syzygies associated to the embeddings given by $K\te B^{\te r}$. 

On a surface, given an ample line bundle $B$, Reider \cite{13} proved that 
$L = K_X \otimes B^{\otimes r}$ is very ample for $r \ge 4$. Then it is natural to ask if it is projectively normal.  
More generally, Mukai conjectured that on a surface, if $B$ is an ample line bundle, then 
$K_X \otimes B^{\otimes r}$ has $N_p$ property for $r \ge  p+4$. 
If $B$ is very ample, the situation is tractable, and there is a nice general result of Ein and Lazarsfeld \cite{5} for projective varieties of all dimensions. If we assume $B$ to be only ample and base point free, many difficulties that one sees in the ample case still persist and it is still a challenge to prove results along the lines of Mukai's conjecture. 

To a large extent, the situation for surfaces of Kodaira dimension less than 2  is clarified. Projective normality for rational surfaces was studied in \cite{H}. \cite{9} looks at higher syzygies on rational surfaces and a strong conjecture, which implies Mukai's conjecture, is made and proved for rational surfaces. Syzygies of ruled surfaces are investigated in \cite{1, 6,7,Pa1,Pa2}. Projective normality on Enriques surfaces is studied in \cite{GLM}. In \cite{4}, effective bounds toward Mukai's conjecture are obtained for surfaces of Kodaira dimension zero.  
See \cite{4,8} for a summary and other results.

Many technical difficulties, unseen for surfaces of smaller Kodaira dimension, arise for the case of surfaces of general type because $K$ becomes more positive. 
In \cite{2}, Mukai's conjecture is proved for large families of line bundles on surfaces of general type. More precisely, $N_p$ property was studied for line bundles of the form $K \te B^{\te r}$ with $B$ ample and base point free under the hypothesis that $B^2 \ge B \cdot K$ (or $B-K$ is nef, for $p \ge 2$).

In this article, we deal with general results about higher syzygies for surfaces of general type and ample and base point free line bundles $B$ for which $B^2 < B\cdot K$.

We prove projective normality and normal presentation for $K + rB$  under the hypothesis $B^2 \ge \frac{a}{b}(B \cdot K)$ for two integers $a < b$. This is a generalization of the assumption $B^2 \ge B \cdot K$ made in \cite{2} to establish $N_0$ and $N_1$ properties. Under the hypothesis that $nB-K$ is nef for an integer $n \ge 2$, we prove $N_p$ property for $K+rB$ where $r$ is a linear function of slope 2 in $n$ and $p$. Finer results are obtained under the  hypothesis that $X$ is regular. 



The line bundles $B$ satisfying $B-K$ nef form large families in the Pic, but this still leaves out infinitely many families for which $B-K$ is not nef. 
A natural question is to look at those families of line bundles $B$ for which $(n+1)B- K$ is nef but $nB-K$ is not nef for some integer $n \ge 1$. It must be noted that there are large classes of examples of surfaces of general type with huge moduli on which there are infinitely many line bundles for which these technical conditions are satisfied. We describe such examples in the last section of this paper. A Mukai type result for such line bundles not only generalizes the earlier results but also yields an asymptotic version of Mukai's conjecture for all surfaces of general type. Our results also provide a context for earlier results and point towards further progress for a complete solution to the conjecture of Mukai.

Castelnuovo-Mumford regularity is a notion that interests both algebraic geometers and commutative algebraists.  For a base point free ample line bundle $B$ on a projective variety $X$, the integers $r$ such that $B^{\te r}$ has $N_p$ property are determined in terms of the Castelnuovo-Mumford regularity of $B$ in \cite[Theorem 1.3]{4}. 
We prove analogous results for adjoint line bundles on 
surfaces of general type $X$. The results do not follow from just general arguments using 
Castelnuovo-Mumford regularity because of the positivity of $K$ (especially for surfaces of general type that are regular) and require different methods.  

The methods that are developed to handle the problems of the kind mentioned above include vector bundle techniques on curves and surfaces; semi-stability of vector bundles on curves and surfaces, multiplication maps of global sections and proving some positivity statements for vector bundles. Other techniques involve proving some inequalities for intersections numbers on surfaces, homological algebra and use of the notion of Castelnuovo-Mumford regularity. The general results that are true for all surfaces of general type use some subtle inductive statements in homological algebra and the so-called Castelnuovo-Mumford lemma. But for regular surfaces of general type, a delicate analysis of linear systems on surfaces gives finer results. 
Here one can reduce the problem of multiplication maps on surfaces to multiplication maps of certain special curves on the surface, the choice of which is not always canonical. Then vector bundle techniques involving stability arguments, and the general methods mentioned above give finer results. 

The structure of the paper is as follows. 

In Section \ref{prelims}, we list some results that are needed later on. Many of the results are classical and stated without proof. 

In Section \ref{N0}, we establish projective normality and normal presentation under the hypothesis that $B^2 \ge \frac{a}{b}(B \cdot K)$ for $a < b$.  
In subsection \ref{reg1}, we obtain better results on regular surfaces by restricting to curves.

In Section \ref{higher}, we study $N_p$ property for $p \ge 2$ under the hypothesis that $nB-K$ is nef or  $nB-2K$ is nef for some $n \geq 2$.  
In \ref{reg2}, we prove stronger results for regular surfaces. 

Finally in Section \ref{examples}, we give 	examples of infinitely many families of surfaces of general type when our hypotheses are satisfied. Some of these examples show that our arguments are optimal. Given $n \ge 2$, we construct examples of infinite families of surfaces and line bundles $B$ on them such that $(n+1)B-K$ is nef and $nB-K$ is \textit{not} nef, thereby proving that our results do not follow from \cite{2}.   


\noindent
{\bf Notation:} Unless otherwise stated, $X$ represents a smooth minimal surface of general type over an algebraically closed field $k$ of characteristic 0. $K$ or $K_X$ denotes its canonical divisor. We will use the multiplicative notation of line bundles and additive notation of divisors interchangeably. Thus when $L$ is a line bundle, $L^{\otimes r}$ and $rL$ are the same.

We write $H^i(L)$ for the cohomology group $H^i(X,L)$. The dimension of $H^i(L)$ as  a vector space over $k$ is denoted by 
$h^i(L)$. $A \cdot B$ refers to the intersection number of two divisors $A$ and $B$. A line bundle $L$ is called \textit{nef} if $L \cdot C \ge 0$ for every irreducible curve $C \subset X$. 
$L$ is called \textit{big} if for some $m > 0$, the mapping defined by $L^{\te m}$ is birational onto its image in the projective space.  

\subsection*{Acknowledgment} We thank the referee for a careful reading and several comments that improved the exposition.

\section{Preliminaries}\label{prelims}
In this section, we state several results that will be used repeatedly in the sequel.  We only prove Lemmas \ref{1.1} and \ref{horace}. The others are from various sources, but we state them here for completeness. 

Given a vector bundle $F$ on a projective variety $X$ that is generated by its global sections, we have the canonical surjective map:\\
\begin{eqnarray}
H^0(F) \otimes {\mathcal O}_{X} \rightarrow F.
\end{eqnarray}

Let $M_F$ be the kernel of this map. We have then the natural exact sequence:
\begin{eqnarray}\label{cansurj}
0 \rightarrow M_F \rightarrow H^0(F) \otimes {\mathcal O}_{X} \rightarrow F \rightarrow 0.
\end{eqnarray}

We are going to study $N_p$ property for adjoint line bundles on a surface of general type. The following characterization of $N_p$ property will be used. 

\begin{theorem}\label{np}\textup{\cite[Lemma 1.6]{5}}
Let $L$ be a very ample line bundle on a projective variety $X$. Assume that $H^1(L^{\otimes k}) = 0$ for all $k \ge 1$. Then $L$  satisfies $N_p$ property 
if and only if $H^1(M^{\otimes a}_L \otimes L^{\otimes b}) = 0$ for all $1 \le a \le p+1$ and $b \ge 1$. 
\end{theorem}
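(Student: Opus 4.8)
The plan is to deduce the statement from Green's dictionary between the $N_p$ property and the vanishing of Koszul cohomology. Since $L$ is very ample, $L$ satisfies $N_p$ (for $p\ge 0$) exactly when the Koszul cohomology groups $K_{i,j}(X,L)$ vanish for all $0\le i\le p$ and all $j\ge 2$, the case $i=0$ recording projective normality ($N_0$); recall that $K_{i,j}(X,L)$ is the cohomology at the middle term of
\[
\textstyle\bigwedge^{i+1}H^0(L)\otimes H^0(L^{j-1})\longrightarrow\bigwedge^{i}H^0(L)\otimes H^0(L^{j})\longrightarrow\bigwedge^{i-1}H^0(L)\otimes H^0(L^{j+1}).
\]
Thus the task is to translate these vanishings into the vanishing of $H^1$ of twists of $M_L$.

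The mechanism is the sequence \eqref{cansurj} with $F=L$: applying $\bigwedge^{a+1}$ and tensoring by $L^{b}$ gives, because $L$ is a line bundle, short exact sequences
\[
0\longrightarrow\textstyle\bigwedge^{a+1}M_L\otimes L^{b}\longrightarrow\bigwedge^{a+1}H^0(L)\otimes L^{b}\longrightarrow\bigwedge^{a}M_L\otimes L^{b+1}\longrightarrow 0
\]
for all $a,b\ge 0$. I would argue by induction on $p$. The base case $p=0$ is immediate: taking $a=0$ and $b=j-1$ above and using $H^1(L^{j-1})=0$ gives $H^1(M_L\otimes L^{j-1})\cong\operatorname{coker}\!\big(H^0(L)\otimes H^0(L^{j-1})\to H^0(L^{j})\big)$, so ``$H^1(M_L\otimes L^{b})=0$ for all $b\ge 1$'' is exactly surjectivity of the multiplication maps $H^0(L)\otimes H^0(L^{b})\to H^0(L^{b+1})$ for $b\ge 1$, i.e.\ $N_0$. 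For the inductive step, assume $N_{p-1}$ is equivalent to ``$H^1(\bigwedge^{a}M_L\otimes L^{b})=0$ for $1\le a\le p$, $b\ge 1$''. Granting these vanishings, the long exact cohomology sequences of the displayed short exact sequences, together with $H^1(L^{k})=0$ for $k\ge 1$, let one compare the Koszul complex above with the global sections of an exact complex of sheaves and read off $K_{p,j}(X,L)\cong H^1(\bigwedge^{p+1}M_L\otimes L^{j-1})$ for $j\ge 2$. Hence $K_{p,j}=0$ for all $j\ge 2$ becomes $H^1(\bigwedge^{p+1}M_L\otimes L^{b})=0$ for all $b\ge 1$, and together with the inductive hypothesis this yields the desired equivalence for $N_p$, with exterior powers, in the range $1\le a\le p+1$, $b\ge 1$.

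It remains to pass from the exterior powers $\bigwedge^{a}M_L$ to the tensor powers $M_L^{\otimes a}$ in the statement. Since $\operatorname{char}k=0$, $\bigwedge^{a}M_L$ is a direct summand of $M_L^{\otimes a}$, so the vanishing of all $H^1(M_L^{\otimes a}\otimes L^{b})$ in the asserted range forces the vanishing of all $H^1(\bigwedge^{a}M_L\otimes L^{b})$ there, hence $N_p$. For the converse I would tensor \eqref{cansurj} by $M_L^{\otimes a-1}\otimes L^{b}$ to obtain
\[
0\longrightarrow M_L^{\otimes a}\otimes L^{b}\longrightarrow H^0(L)\otimes M_L^{\otimes a-1}\otimes L^{b}\longrightarrow M_L^{\otimes a-1}\otimes L^{b+1}\longrightarrow 0,
\]
and feed the exterior-power vanishings through the characteristic-zero decomposition of $M_L^{\otimes a}$ into Schur summands, together with an induction on $a$, to get $H^1(M_L^{\otimes a}\otimes L^{b})=0$ throughout $1\le a\le p+1$, $b\ge 1$.

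I expect the main obstacle to be the index bookkeeping in the middle step: one must organise the chain of exterior-power sequences so that $K_{i,j}(X,L)$ is matched with $H^1(\bigwedge^{i+1}M_L\otimes L^{j-1})$ on the nose, and verify that each intermediate $H^1$ that has to vanish for the identification to go through is either already covered by the inductive hypothesis or is of the form $\bigwedge^{\bullet}H^0(L)\otimes H^1(L^{k})$ with $k\ge 1$, and so is killed by the assumption $H^1(L^{k})=0$. The conceptual inputs — Green's identification of $N_p$ with Koszul vanishing and the characteristic-zero splitting $\bigwedge^{a}M_L\hookrightarrow M_L^{\otimes a}$ — are standard, so the real work is the cohomological chase and making the induction precise.
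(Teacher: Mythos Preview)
The paper does not give its own proof of this statement; it is quoted as a preliminary from Ein--Lazarsfeld \cite[Lemma~1.6]{5}. Your sketch is precisely the standard argument from that reference: identify $N_p$ with the vanishing of Koszul cohomology, unwind the Koszul groups as $H^1(\bigwedge^{a}M_L\otimes L^{b})$ by chasing the exterior-power sequences derived from the defining sequence of $M_L$ (using $H^1(L^{k})=0$ at each stage), and then pass between exterior and tensor powers via the direct-summand splitting in characteristic zero. For the direction actually used throughout the paper (tensor-power vanishing $\Rightarrow N_p$) your plan is correct and coincides with the cited source.

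One caution on the converse. Your proposed route --- the tensored sequence together with the Schur decomposition of $M_L^{\otimes a}$ --- is not yet a proof. The Schur decomposition alone does not help, since knowing $H^1(\bigwedge^{c}M_L\otimes L^{b})=0$ for $c\le p+1$ says nothing directly about the other Schur summands $S_\lambda M_L$ of $M_L^{\otimes a}$; and the tensored sequence you display reduces $H^1(M_L^{\otimes a}\otimes L^{b})=0$ to the surjectivity of
\[
H^0(L)\otimes H^0(M_L^{\otimes a-1}\otimes L^{b})\longrightarrow H^0(M_L^{\otimes a-1}\otimes L^{b+1}),
\]
which you have not linked back to the exterior-power hypothesis. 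This direction is never invoked in the paper, so the gap is harmless for the applications here, but if you want a genuine ``if and only if'' you should spell out how the inductive hypothesis on $M_L^{\otimes a-1}$ combines with the vanishing for $\bigwedge^{a}M_L$ to force that multiplication map to surject.
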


The proof of Lemma 1.6, \cite{5} in fact shows that this result holds for ample and base point free line bundles $L$. We will use this version in this paper. 

The following useful remarks will be used repeatedly. Let $F$ be a globally generated vector bundle and let $A$ be any line bundle on a projective variety $X$. 
\begin{remark}\label{00}{\rm 
$H^1(M_F \otimes A) = 0$ if the following two conditions hold.
\begin{itemize}
\item The multiplication map $H^0(F) \otimes H^0(A) \rightarrow H^0(F \otimes A)$ is surjective.
\item $H^1(A) = 0$. 
\end{itemize}

This is easy to see: tensor the sequence (\ref{cansurj}) by $A$ and take global sections:

$..\rightarrow H^0(F) \otimes H^0(A) \rightarrow H^0(F \otimes A) \rightarrow H^1(M_F \otimes A)
\rightarrow H^0(F) \otimes H^1(A) \rightarrow ...$}
\end{remark}
\begin{remark}\label{000}{\rm 
$H^2(M_F \otimes A) = 0$ if the following two conditions hold.
\begin{itemize}
\item $H^1(F \otimes A) = 0$.
\item $H^2(A) = 0$.
\end{itemize}
This is easy to see: tensor the sequence (\ref{cansurj}) by $A$ and take global sections:

$..\rightarrow H^1(F \otimes A) \rightarrow H^2(M_F \otimes A)
\rightarrow H^0(F) \otimes H^2(A) \rightarrow ...$}
\end{remark}

\begin{lemma}\label{1.1}
Let $X$ be a surface of general type. Let $B$ be an ample base point free divisor on $X$ with $B^2 \geq \frac{a}{b}(B \cdot K)$ for positive integers $a < b$. Suppose that $H^1(mB) = 0$ for some $m > \frac{b}{a}$. Then $H^1(lB) = 0$ for all $l \geq m$.
\end{lemma}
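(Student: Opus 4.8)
The plan is to induct on $l \geq m$. The base case $l = m$ is the hypothesis. For the inductive step, assume $H^1(lB) = 0$ for some $l \geq m$; I want to deduce $H^1((l+1)B) = 0$. The natural tool is the restriction sequence: pick a general member $C \in |B|$ (which exists and is a smooth irreducible curve since $B$ is ample and base point free, by Bertini), giving
\begin{equation}
0 \to lB \to (l+1)B \to (l+1)B|_C \to 0.
\end{equation}
Taking cohomology, since $H^1(lB) = 0$ by the inductive hypothesis, it suffices to show that the restriction map $H^1((l+1)B) \to H^1((l+1)B|_C)$ is injective — equivalently, since $H^1(lB) = 0$, that $H^1((l+1)B|_C) = 0$. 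So the whole problem reduces to vanishing of an $H^1$ on the curve $C$.

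On the curve $C$, we have $\deg\big((l+1)B|_C\big) = (l+1)B^2$ and, by adjunction, $\deg(K_C) = (K+B)\cdot C = K\cdot B + B^2$. By Serre duality on $C$, $H^1\big((l+1)B|_C\big)^\vee \cong H^0\big(K_C - (l+1)B|_C\big)$, and this vanishes as soon as $\deg(K_C) - (l+1)B^2 < 0$, i.e. as soon as $K\cdot B + B^2 < (l+1)B^2$, i.e. $K \cdot B < l\, B^2$. Here is where the hypotheses enter: since $B^2 \geq \frac{a}{b}(B\cdot K)$, we have $B\cdot K \leq \frac{b}{a} B^2$, so it is enough that $\frac{b}{a} B^2 < l\, B^2$, i.e. $l > \frac{b}{a}$, which holds because $l \geq m > \frac{b}{a}$. (Note $B^2 > 0$ since $B$ is ample.) This closes the induction.

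The main subtlety I anticipate is not in the cohomological bookkeeping but in making sure the numerical inequality is strict in the right place — one must use $m > \frac{b}{a}$ rather than $m \geq \frac{b}{a}$, since the Serre duality vanishing on $C$ needs the strict inequality $\deg(K_C) < \deg\big((l+1)B|_C\big)$, and one should double check whether $C$ being possibly reducible or non-reduced for small $B$ causes trouble (it does not, by Bertini, since $B$ is base point free, but one might alternatively avoid choosing $C$ altogether and argue directly). A cleaner variant avoiding the curve restriction is to note that $(l+1)B - K$ is big and nef for $l > \frac{b}{a}$ — indeed $((l+1)B - K)\cdot B = (l+1)B^2 - K\cdot B \geq (l+1)B^2 - \frac{b}{a}B^2 > 0$ and one checks positivity against other curves — and then invoke Kawamata–Viehweg vanishing to get $H^1\big(K + ((l+1)B - K)\big) = H^1((l+1)B) = 0$ directly, without induction; but since the paper works in the adjoint setting and wants an elementary statement, the inductive curve-restriction argument above is likely the intended route and is the one I would write up.
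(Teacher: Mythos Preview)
Your proposal is correct and follows essentially the same approach as the paper: induct on $l$, restrict to a smooth curve $C \in |B|$, use the exact sequence $0 \to lB \to (l+1)B \to (l+1)B|_C \to 0$, and kill $H^1((l+1)B|_C)$ by a degree count on $C$ combined with Serre duality and the hypothesis $B^2 \geq \tfrac{a}{b}(B\cdot K)$, $m > \tfrac{b}{a}$. The paper's degree estimate is written slightly differently (it bounds $lB^2$ via $(l-1)B^2 + B^2 \geq \tfrac{ma}{b}(B\cdot K) + B^2 > 2g(C)-2$), but the content is identical to your inequality $K\cdot B < l\,B^2$.
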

\begin{proof} Let $C \in |B|$ be a smooth irreducible curve. 
Let $D = lB|_C$ for some $l \geq m+1$. Then we have
$deg(D) = lB^2 = (l-1)B^2 + B^2 \geq \frac{ma}{b} B \cdot K + B^2 > B\cdot K + B^2 = 2g(C)-2.$
So $deg(K_C - D) < 0$ and $H^1(D) = H^0(K_C - D) = 0$.
Now consider the short exact sequence:  
$0 \rightarrow \mathcal{O}(-B) \rightarrow \mathcal{O} \rightarrow {\mathcal{O}}_C \rightarrow 0$. 

Tensoring with $(m+1)B$ and taking global sections, we obtain:

$..\rightarrow H^1(mB) \rightarrow H^1((m+1)B)  \rightarrow H^1((m+1)B|_C) \rightarrow ..$

$H^1(mB) = 0$, by assumption and we have shown above that $H^1((m+1)B|_C) = 0$. So 
$H^1((m+1)B) = 0$. Now it follows easily, by induction, that $H^1(lB) = 0$ for all $l \geq m+1$.
\end{proof}
\begin{corollary}
Let $X$ be a surface of general type. Let $B$ be an ample base point free divisor on $X$ with $B^2 \geq \frac{1}{n}(B \cdot K)$ for a positive integer $n \ge 2$. Suppose that $H^1(mB) = 0$ for some $m \geq n+1$. Then $H^1(lB) = 0$ for all $l \geq m$.
\end{corollary}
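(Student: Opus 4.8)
This is an immediate special case of Lemma \ref{1.1}. The plan is to apply that lemma with $a = 1$ and $b = n$. First I would check the hypotheses of Lemma \ref{1.1} are met: since $n \ge 2$ we have $a = 1 < n = b$ as required of the pair of positive integers, and the inequality $B^2 \ge \frac{1}{n}(B \cdot K)$ is exactly $B^2 \ge \frac{a}{b}(B \cdot K)$ in this notation. Next, the numerical condition ``$m > \frac{b}{a}$'' in Lemma \ref{1.1} becomes $m > n$, which is the same as $m \ge n+1$ for the integer $m$; this is precisely our hypothesis. Hence Lemma \ref{1.1} applies verbatim and gives $H^1(lB) = 0$ for all $l \ge m$.

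There is essentially no obstacle here: the only thing to be careful about is matching the strict inequality $m > \frac{b}{a} = n$ with the stated integral hypothesis $m \ge n+1$, and making sure $n \ge 2$ is what guarantees $a < b$ so that Lemma \ref{1.1} is genuinely applicable (the borderline case $n = 1$ is excluded for exactly this reason). No new computation or construction is needed beyond invoking the lemma.

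\begin{proof}
Apply Lemma \ref{1.1} with $a = 1$ and $b = n$. Since $n \ge 2$, we have $a < b$, and the hypothesis $B^2 \ge \frac{1}{n}(B \cdot K)$ is $B^2 \ge \frac{a}{b}(B \cdot K)$. Moreover $m \ge n+1 > n = \frac{b}{a}$, so the hypothesis $H^1(mB) = 0$ with $m > \frac{b}{a}$ of Lemma \ref{1.1} holds. We conclude $H^1(lB) = 0$ for all $l \ge m$.
\end{proof}
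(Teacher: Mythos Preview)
Your proof is correct and is essentially identical to the paper's own argument, which simply reads ``Follows immediately from the lemma by setting $a = 1$, $b = n$.'' You have merely spelled out the verification that the hypotheses of Lemma \ref{1.1} are met, which is fine.
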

\begin{proof} Follows immediately from the lemma by setting $a = 1, b =n$.\end{proof}

\begin{lemma}\label{1.2}
Let $X$ be a surface with nonnegative Kodaira dimension and let $B$ be an ample line bundle. If $B^2 \ge \frac{a}{b} (B \cdot K)$ for positive integers $a < b$, then 
$B\cdot K \ge \frac{a}{b} (K^2)$.
\end{lemma}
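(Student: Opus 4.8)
The plan is to use the Hodge index theorem, exactly as one expects for an inequality of this shape relating $B^2$, $B\cdot K$ and $K^2$. Since $X$ is minimal of nonnegative Kodaira dimension, $K$ is nef and $K^2 \ge 0$; in particular $B\cdot K \ge 0$ because $B$ is ample (a nef divisor meets an effective one nonnegatively). The ampleness of $B$ also gives $B^2 > 0$, so the Hodge index theorem applies to the pair $(B,K)$ and yields
\[
(B\cdot K)^2 \ge B^2\cdot K^2 .
\]

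Next I would simply substitute the hypothesis. Multiplying $B^2 \ge \frac{a}{b}(B\cdot K)$ by $K^2 \ge 0$ preserves the inequality, so $B^2\cdot K^2 \ge \frac{a}{b}(B\cdot K)\,K^2$, and combining with the previous line,
\[
(B\cdot K)^2 \ge \frac{a}{b}(B\cdot K)\,K^2 .
\]
If $B\cdot K>0$, dividing through by $B\cdot K$ gives $B\cdot K \ge \frac{a}{b}K^2$, which is the claim.

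It remains to dispose of the degenerate case $B\cdot K = 0$. Here the Hodge index inequality forces $0 \ge B^2\cdot K^2$, and since $B^2>0$ this gives $K^2\le 0$, hence $K^2=0$; the desired inequality then reads $0\ge 0$. So there is really no serious obstacle: the argument is a short application of Hodge index together with the standard facts that $K$ is nef and $K^2\ge 0$ on a minimal surface of nonnegative Kodaira dimension, and the only point meriting a moment's care is the boundary case $B\cdot K=0$.
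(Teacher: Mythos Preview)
Your proof is correct. The paper does not give an independent argument but simply reduces to \cite[Lemma~2.2]{2} via the substitution $B' = bB$, $m = a$; that cited lemma is itself a Hodge-index statement, so your direct self-contained application of the Hodge index theorem is essentially the same mathematics, just unpacked rather than quoted. One small remark: the implicit assumption that $X$ is minimal (so that $K$ is nef and $K^2 \ge 0$) is indeed needed and is consistent with the paper's standing conventions.
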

\begin{proof}
This follows easily from \cite[Lemma 2.2]{2}. Indeed, let $B' = bB$ and $m = a$. Then apply \cite[Lemma 2.2]{2} to $B', m$ to obtain the lemma. 
\end{proof}

\begin{lemma}\label{horace}
Let $E$ and $L_1, L_2,...,L_r$ be coherent sheaves on a variety $X$. Consider the multiplication maps

$\psi: H^0(E) \otimes H^0(L_1 \otimes ...\otimes L_r) \rightarrow H^0(E \otimes L_1 \otimes ... \otimes L_r)$,

$\alpha_1: H^0(E) \otimes H^0(L_1) \rightarrow H^0(E \otimes L_1)$,

$\alpha_2: H^0(E \otimes L_1) \otimes H^0(L_2) \rightarrow H^0(E \otimes L_1 \otimes L_2)$,

...,

$\alpha_r: H^0(E\otimes L_1 \otimes ... \otimes L_{r-1}) \otimes H^0(L_r) \rightarrow H^0(E \otimes L_1 \otimes ... \otimes L_{r})$.

If $\alpha_1$,...,$\alpha_{r}$ are surjective, then so is $\psi$.
\end{lemma}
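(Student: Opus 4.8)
The plan is to prove the statement by induction on $r$, building up the multiplication map one factor at a time and using the surjectivity hypotheses in sequence. The base case $r=1$ is trivial: $\psi=\alpha_1$. For the inductive step, suppose the result holds for $r-1$ factors. Apply the inductive hypothesis to the sheaf $E\otimes L_1$ together with the sheaves $L_2,\dots,L_r$: since $\alpha_2,\dots,\alpha_r$ are surjective, we conclude that the multiplication map
$$
\psi': H^0(E\otimes L_1)\otimes H^0(L_2\otimes\cdots\otimes L_r)\rightarrow H^0(E\otimes L_1\otimes\cdots\otimes L_r)
$$
is surjective. On the other hand, $\alpha_1$ is surjective by hypothesis, so tensoring with $H^0(L_2\otimes\cdots\otimes L_r)$ shows that the map
$$
\alpha_1\otimes\mathrm{id}: H^0(E)\otimes H^0(L_1)\otimes H^0(L_2\otimes\cdots\otimes L_r)\rightarrow H^0(E\otimes L_1)\otimes H^0(L_2\otimes\cdots\otimes L_r)
$$
is surjective (tensoring a surjection of $k$-vector spaces with a fixed vector space preserves surjectivity).

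The key point is then to assemble these into a commutative diagram. Consider
$$
\begin{CD}
H^0(E)\otimes H^0(L_1)\otimes H^0(L_2\otimes\cdots\otimes L_r) @>{\alpha_1\otimes\mathrm{id}}>> H^0(E\otimes L_1)\otimes H^0(L_2\otimes\cdots\otimes L_r)\\
@V{\mathrm{id}\otimes\mu}VV @VV{\psi'}V\\
H^0(E)\otimes H^0(L_1\otimes\cdots\otimes L_r) @>{\psi}>> H^0(E\otimes L_1\otimes\cdots\otimes L_r)
\end{CD}
$$
where $\mu: H^0(L_1)\otimes H^0(L_2\otimes\cdots\otimes L_r)\rightarrow H^0(L_1\otimes\cdots\otimes L_r)$ is the multiplication map on sections. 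This diagram commutes because both composites send a decomposable tensor $s\otimes t_1\otimes t'$ to the product section $s\cdot t_1\cdot t'$ in $H^0(E\otimes L_1\otimes\cdots\otimes L_r)$; this is just associativity and commutativity of the ring structure on sections. Since the top arrow and the right arrow are surjective, the composite along the top-right is surjective, hence so is the composite along the left-bottom, and therefore $\psi$ is surjective. This completes the induction.

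I do not expect any genuine obstacle here: the statement is essentially formal, a bookkeeping exercise about compatibility of multiplication maps. The only mild subtlety worth spelling out is the direction in which the induction is applied — one must feed $E\otimes L_1$ (not $E$) into the inductive hypothesis, using precisely the maps $\alpha_2,\dots,\alpha_r$ — and the observation that a surjection of vector spaces remains surjective after tensoring with an arbitrary vector space over $k$, which is what lets us promote $\alpha_1$ to $\alpha_1\otimes\mathrm{id}$. With the diagram in place, the conclusion is immediate from the fact that if a composite of two maps is surjective then the second map is surjective.
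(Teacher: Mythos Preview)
Your proof is correct and takes essentially the same approach as the paper: both arguments factor the target map through the chain $\alpha_1,\dots,\alpha_r$ via a commutative diagram and conclude surjectivity of $\psi$ from surjectivity of the composite. The only cosmetic difference is that the paper writes out a single large diagram with all $r$ steps at once and does a direct chase, whereas you package the same chase as an induction on $r$; the content is identical.
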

\begin{proof}
We have the following commutative diagram where $id$ denotes the identity morphism:
\begin{displaymath}
\xymatrix @R=2pc @C=3pc{
H^0(E) \otimes H^0(L_1) \otimes ... \otimes H^0(L_r) \ar[r]^{\alpha_1 \otimes id} \ar[d]^{\phi} & H^0(E \otimes L_1) \otimes H^0(L_2) \otimes ... \otimes H^0(L_r) \ar[d]^{\alpha_2 \otimes id} \\
H^0(E) \otimes H^0(L_1 \otimes ... \otimes L_r)    \ar[dd]^{\psi} & H^0(E \otimes L_1 \otimes L_2) \otimes H^0(L_3) \otimes ... \otimes H^0(L_r)  \ar[d]^{\alpha_3 \otimes id} \\
   &...\ar[d]^{\alpha_{r-1} \otimes id}\\
H^0(E \otimes L_1 \otimes ... \otimes L_r)   &\ar[l]_{\alpha_r} H^0(E \otimes L_1 \otimes ... \otimes L_{r-1}) \otimes H^0(L_r)}
\end{displaymath}

Since $\alpha_1, \alpha_2,...,\alpha_r$ are surjective and this diagram is commutative, a simple diagram chase shows that  $\psi$ is surjective.
\end{proof}

We will now state some results without proof which will be used often in this paper. 
The first one is the Castelnuovo - Mumford lemma.  We remark that though Mumford stated this fact under the hypothesis that $E$ is ample and base point free, the proof works with only the base point free assumption. 

\begin{lemma}\label{key} \textup{\cite[Theorem 2]{3}}
Let $E$ be a base-point free line bundle on a projective variety $X$ and let $F$ be a coherent sheaf on $X$. If $H^i(F \otimes E^{-i}) = 0$ for $i \geq 1$, then the multiplication map 
$$H^0(F \otimes E^{\otimes i}) \otimes H^0(E) \rightarrow H^0(F \otimes E^{\otimes i+1})$$
is surjective for all $i \geq 0$. 
\end{lemma}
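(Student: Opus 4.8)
\emph{Proof proposal.} The plan is to run the classical Mumford-style argument: induct on the dimension of $\operatorname{Supp}(F)$ and cut down by a general member of $|E|$. The one point that must be arranged before starting is that, since $X$ is an arbitrary projective variety, the restriction map $H^0(E)\to H^0(E|_D)$ is \emph{not} surjective in general, so after restricting to a divisor $D\in|E|$ one can only use the image of $H^0(E)$ there, which is a proper linear subsystem. I would therefore strengthen the statement before inducting: for every base-point-free linear subsystem $W\subseteq H^0(E)$ (i.e.\ one whose sections generate $E$), if $H^i(F\otimes E^{-i})=0$ for all $i\ge 1$ then the map $H^i(F\otimes E^i)\otimes W\to H^i(F\otimes E^{i+1})$ is surjective for all $i\ge 0$; the Lemma is the case $W=H^0(E)$.

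Fix a general $s\in W$; since $k$ is infinite and $W$ is base-point-free, $D:=Z(s)$ may be chosen so that $s$ is a non-zero-divisor on $F$, and then $0\to\mathcal{O}_X\xrightarrow{\,s\,}E\to E|_D\to 0$ stays exact after tensoring by $F\otimes E^n$ for every $n\in\mathbb{Z}$. Write $F_D=F|_D$, $E_D=E|_D$, and $W'=\operatorname{image}\big(W\to H^0(E_D)\big)$; then $\dim\operatorname{Supp}(F_D)<\dim\operatorname{Supp}(F)$ and $W'$ is base-point-free on $D$. The argument then has three parts. \emph{(i) Transfer of the hypothesis.} From $0\to F\otimes E^{-i-1}\to F\otimes E^{-i}\to F_D\otimes E_D^{-i}\to 0$ and the hypothesis one gets $H^i(F_D\otimes E_D^{-i})=0$ for $i\ge 1$, so $(D,F_D,E_D)$ satisfies the same assumption. \emph{(ii) Propagation of vanishing.} $H^i(F\otimes E^n)=0$ whenever $i\ge 1$ and $i+n\ge 0$; this I would prove by a secondary induction on $i+n$, the base case $i+n=0$ being the hypothesis, the step using $0\to F\otimes E^{n-1}\to F\otimes E^n\to F_D\otimes E_D^n\to 0$ together with propagation for $F_D$ (available from the outer induction). \emph{(iii) The multiplication map.} Apply cohomology to $0\to F\otimes E^i\xrightarrow{\,s\,}F\otimes E^{i+1}\xrightarrow{\,r\,}F_D\otimes E_D^{i+1}\to 0$. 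Because $s\in W$, the image of $\mu_i^W\colon H^i(F\otimes E^i)\otimes W\to H^i(F\otimes E^{i+1})$ already contains $s\cdot H^i(F\otimes E^i)=\ker r$, so it is enough to show $r$ maps $\operatorname{Im}\mu_i^W$ onto all of $H^i(F_D\otimes E_D^{i+1})$. By (ii), $H^{i+1}(F\otimes E^i)=0$ and $H^{i+1}(F\otimes E^{i-1})=0$, so $r$ is surjective and $H^i(F\otimes E^i)\to H^i(F_D\otimes E_D^i)$ is surjective; since multiplication commutes with restriction to $D$, $r(\operatorname{Im}\mu_i^W)$ equals the image of $H^i(F_D\otimes E_D^i)\otimes W'\to H^i(F_D\otimes E_D^{i+1})$, which is everything by the outer induction applied to $(D,F_D,E_D,W')$. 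The base case $\dim\operatorname{Supp}(F)=0$ is immediate: there a general $s\in W$ is nowhere zero on $\operatorname{Supp}(F)$, so $F\xrightarrow{\,s\,}F\otimes E$ is an isomorphism near the support, hence $\mu_i^W$ is onto (and $H^i$ vanishes for $i\ge 1$).

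The main obstacle is exactly the phenomenon that forces the strengthening: on a general $X$ one cannot lift sections of $E|_D$ to sections of $E$, so the naive restriction argument fails, and one must carry a base-point-free sublinear system through the entire induction, checking at each step that it restricts to a base-point-free sublinear system on $D$. Everything else---choosing $s$ generic enough that the Koszul sequence remains exact, and the bookkeeping of the double induction (outer on $\dim\operatorname{Supp}(F)$, inner on $i+n$)---is routine.
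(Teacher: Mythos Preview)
The paper does not give its own proof of this lemma: it is listed among results ``state[d] without proof'' and attributed to Mumford \cite[Theorem~2]{3}. Your proposal is precisely the classical Mumford argument (induction on $\dim\operatorname{Supp} F$, cutting by a general member of a base-point-free linear series), including the standard strengthening to an arbitrary base-point-free sublinear system $W\subseteq H^0(E)$ so that the induction survives restriction to $D$. The argument is correct; note in passing that once your step~(ii) is established, the surjectivity for $i\ge 1$ is vacuous since both $H^i(F\otimes E^i)$ and $H^i(F\otimes E^{i+1})$ vanish, so all the content lies in $i=0$.
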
		

We refer to this result as CM lemma in this paper.

\begin{lemma}\label{kv}
{\bf \text (Kawamata - Viehweg vanishing)} Let $X$ be a nonsingular projective variety over the complex number field $\mathbb{C}$. Let $D$ be a nef and big divisor on $X$. Then $$H^i(K_X \otimes D) = 0, \text {~for~} i > 0.$$ 
\end{lemma}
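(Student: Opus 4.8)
The plan is to deduce this deep vanishing from the classical Kodaira vanishing theorem --- which handles the case of an \emph{ample} divisor --- by way of Kodaira's lemma and a cyclic covering construction; since $X$ is defined over $\mathbb{C}$, the Hodge-theoretic ingredients are available directly.

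The first step is to reduce to the case in which the ``error term'' of $D$ is a simple normal crossings divisor. Because $D$ is big, Kodaira's lemma yields $D\sim_{\mathbb{Q}}A+E$ with $A$ an ample $\mathbb{Q}$-divisor and $E\ge 0$ an effective $\mathbb{Q}$-divisor; incorporating the nefness of $D$, for every small rational $\varepsilon>0$ one has
$$D=(1-\varepsilon)D+\varepsilon D\;\sim_{\mathbb{Q}}\;\big((1-\varepsilon)D+\varepsilon A\big)+\varepsilon E\;=\;A_\varepsilon+\varepsilon E ,$$
where $A_\varepsilon$ is ample (nef plus ample) and $\varepsilon E$ has arbitrarily small coefficients. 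Replacing $X$ by a log resolution $\mu\colon X'\to X$ of the pair $(X,E)$ changes neither side of the assertion: Grauert--Riemenschneider gives $R^{j}\mu_{*}\omega_{X'}=0$ for $j>0$ and $\mu_{*}\omega_{X'}=\omega_X$, so the projection formula and the Leray spectral sequence yield $H^i(X',\omega_{X'}\otimes\mu^{*}D)=H^i(X,K_X\otimes D)$, while $\mu^{*}D$ stays nef and big. Hence one may assume $\operatorname{Supp}(E)$ is simple normal crossings.

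Next I would clear denominators by a suitable integer $N\ge 2$, use Bertini to pick a general smooth member of a very ample system built from $NA_\varepsilon$, and assemble from it and from $N\varepsilon E$ a branch divisor $B$ that has simple normal crossings support and multiplicities $<N$. Forming the degree-$N$ cyclic cover $\pi\colon Y\to X$ determined by $\mathcal{O}_X(D)$ and $B$, resolving $\nu\colon\widetilde Y\to Y$, and combining Grauert--Riemenschneider for $\nu$ with the Esnault--Viehweg formula
$$(\pi\circ\nu)_{*}\omega_{\widetilde Y}\;\cong\;\bigoplus_{j=0}^{N-1}\omega_X\otimes\mathcal{O}_X\!\Big(jD-\big\lfloor\tfrac{j}{N}B\big\rfloor\Big) ,$$
one exhibits $H^i(X,K_X\otimes D)$ as a direct summand of a cohomology group on the smooth projective variety $\widetilde Y$, which carries a natural nef and big line bundle. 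The vanishing of that group for $i>0$ is then the sought Kodaira-type statement on $\widetilde Y$, supplied by Hodge theory --- the $E_1$-degeneration of the Hodge-to-de Rham spectral sequence, in the logarithmic form due to Deligne.

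The main obstacle is exactly this endgame and the combinatorics it rests on: one must coordinate the branch divisor $B$, the covering degree $N$, and the perturbation $\varepsilon$ so that $B$ is simple normal crossings (which is what forces the log-resolution step), so that the round-downs $\lfloor\tfrac{j}{N}B\rfloor$ leave the summand recording $K_X\otimes D$ in the form needed upstairs, and so that everything descends compatibly to $X$. A naive induction on $\dim X$ is of no help, since $\widetilde Y$ has the same dimension as $X$; this is precisely why the Hodge-theoretic degeneration (equivalently, Kawamata's covering trick, which packages it) is genuinely required and the plain Kodaira theorem alone does not suffice. A completely different route avoids covers altogether: put a singular Hermitian metric on $\mathcal{O}_X(D)$ whose curvature current is strictly positive --- possible since $D$ is nef and big, after a Demailly-type regularization --- and apply Nadel's vanishing theorem.
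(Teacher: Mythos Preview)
The paper does not give a proof of this lemma at all: it is quoted as background and the reader is referred to the original papers of Kawamata and Viehweg. There is therefore nothing in the paper to compare your argument against.

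That said, your outline is a faithful sketch of the classical proof in those sources: Kodaira's lemma to write $D\sim_{\mathbb{Q}}A_\varepsilon+\varepsilon E$ with $A_\varepsilon$ ample and $\varepsilon E$ small effective, passage to a log resolution, a cyclic (or Kawamata) cover to kill the fractional part, and Hodge-theoretic degeneration to finish. One point deserves tightening. After replacing $X$ by the log resolution $\mu\colon X'\to X$, you write ``hence one may assume $\operatorname{Supp}(E)$ is simple normal crossings''; but the pullback $\mu^{*}A_\varepsilon$ is only nef and big, not ample, so the decomposition you need on $X'$ is not simply the pullback of the one on $X$. The standard fix is to subtract a small effective $\mu$-exceptional divisor: there is such an $F$ with $-F$ relatively ample, so $\mu^{*}A_\varepsilon-\delta F$ is ample for $0<\delta\ll 1$, and then
\[
\mu^{*}D\;\sim_{\mathbb{Q}}\;(\mu^{*}A_\varepsilon-\delta F)\;+\;(\varepsilon\,\mu^{*}E+\delta F)
\]
is the desired ample-plus-SNC-effective decomposition on $X'$ (provided the resolution was chosen so that $\mu^{-1}E\cup\operatorname{Exc}(\mu)$ is SNC). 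With this adjustment your plan goes through, and your closing remark about the analytic route via singular metrics and Nadel vanishing is also correct as an alternative.
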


For a proof, see \cite{10} or \cite{11}. We will refer to this result simply as K-V vanishing.

The next three results will be used to prove the surjectivity of multiplication maps on regular surfaces. 

\begin{lemma}\label{redcurves}
Let $X$ be a regular surface. Let $E$ be a vector bundle and let $C$ be a divisor such that $L = \mathcal{O}_X(C)$ is a base point free divisor and  $H^1(E \otimes L^{-1})= 0$. If the multiplication map $H^0(E \otimes \mathcal{O}_C) \otimes H^0(L \otimes \mathcal{O}_C) \rightarrow H^0(E \otimes L \otimes \mathcal{O}_C)$
is surjective, then the map $H^0(E) \otimes H^0(L) \rightarrow H^0(E \otimes L)$ is also surjective. 
\end{lemma}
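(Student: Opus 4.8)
The strategy is to exploit the short exact sequence associated to the divisor $C$, namely
\begin{equation*}
0 \rightarrow \mathcal{O}_X(-C) \rightarrow \mathcal{O}_X \rightarrow \mathcal{O}_C \rightarrow 0,
\end{equation*}
in order to transfer the surjectivity hypothesis from the curve $C$ to the whole surface $X$. First I would tensor this sequence by $E$ (respectively by $L$, and by $E \otimes L$) and take the long exact sequence in cohomology. Tensoring by $E$ gives $H^0(E) \twoheadrightarrow H^0(E \otimes \mathcal{O}_C)$ precisely because the next term is $H^1(E \otimes \mathcal{O}_X(-C)) = H^1(E \otimes L^{-1}) = 0$, which is exactly the hypothesis. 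Tensoring by $L$ gives $H^0(L) \rightarrow H^0(L \otimes \mathcal{O}_C)$, whose surjectivity follows once we know $H^1(L \otimes L^{-1}) = H^1(\mathcal{O}_X) = 0$, i.e. the regularity of $X$. Tensoring by $E \otimes L$ gives the exact sequence
\begin{equation*}
H^0(E \otimes L) \rightarrow H^0(E \otimes L \otimes \mathcal{O}_C) \rightarrow H^1(E \otimes L \otimes L^{-1}) = H^1(E),
\end{equation*}
so the restriction $H^0(E \otimes L) \rightarrow H^0(E \otimes L \otimes \mathcal{O}_C)$ need not be surjective, but its cokernel injects into $H^1(E)$.

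The second step is a diagram chase. I would set up the commutative square whose rows are the two multiplication maps $H^0(E) \otimes H^0(L) \rightarrow H^0(E \otimes L)$ and $H^0(E \otimes \mathcal{O}_C) \otimes H^0(L \otimes \mathcal{O}_C) \rightarrow H^0(E \otimes L \otimes \mathcal{O}_C)$, and whose vertical maps are the restriction maps just discussed. The left vertical map is surjective (tensor product of the two surjections established above), and the bottom multiplication map is surjective by hypothesis; hence the composite $H^0(E) \otimes H^0(L) \rightarrow H^0(E \otimes L \otimes \mathcal{O}_C)$ is surjective. Chasing the square, this shows the image of the top multiplication map $H^0(E) \otimes H^0(L) \rightarrow H^0(E \otimes L)$ surjects onto $H^0(E \otimes L \otimes \mathcal{O}_C)$, i.e. together with the image of $H^0(E \otimes L \otimes \mathcal{O}_X(-C))$ it spans all of $H^0(E \otimes L)$.

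The final step closes the induction. The section $1 \in H^0(\mathcal{O}_X)$, multiplied into things, lets one identify $H^0(E \otimes L \otimes \mathcal{O}_X(-C))$ with a subspace of $H^0(E \otimes L)$; but more efficiently, I would twist the original sequence $0 \to \mathcal{O}_X(-C) \to \mathcal{O}_X \to \mathcal{O}_C \to 0$ by $E \otimes L$ and combine it with the surjectivity just obtained onto $H^0(E \otimes L \otimes \mathcal{O}_C)$: since the image of the multiplication map maps onto $H^0(E\otimes L\otimes\mathcal O_C)$, any $s \in H^0(E \otimes L)$ differs from something in that image by an element of $H^0(E \otimes L \otimes \mathcal{O}_X(-C)) = H^0(E \otimes L(-C))$. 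It then remains to see that $H^0(E \otimes L(-C)) = H^0(E)$ lies in the image of the multiplication map — and indeed a section of $E$ times the tautological section of $L$ vanishing on $C$ (i.e. the element of $H^0(L)$ cutting out $C$) lands there. Feeding this back shows the top multiplication map is surjective.

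The main obstacle, and the only place the hypotheses are genuinely used, is bookkeeping the role of the distinguished section $C \in |L|$: one must be careful that the chosen section of $H^0(L)$ vanishing along $C$ together with the restriction $H^0(L) \to H^0(L\otimes\mathcal O_C)$ being surjective really does let every section of $E \otimes L$ be written as a multiplication-image plus a multiple (by the section vanishing on $C$) of a section of $E$. In other words, the subtlety is checking that $H^0(E \otimes L(-C))$, which a priori only surjects from $H^0(E) \otimes H^0(L(-C))$, is absorbed — this is immediate once one notes $L(-C) = \mathcal{O}_X$, so $H^0(E \otimes L(-C)) = H^0(E) = H^0(E) \otimes H^0(\mathcal{O}_X)$, and multiplication by the canonical section of $L$ vanishing on $C$ realizes it inside the image. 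Everything else is formal: the two cohomological surjections and one commutative square.
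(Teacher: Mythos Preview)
Your argument is correct and is exactly the standard proof of this fact. The paper itself does not spell out a proof but simply refers to \cite[Observation 2.3]{4}; your write-up is essentially the content of that reference, so there is nothing to compare.
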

\begin{proof}
This is not difficult: e.g., see \cite[Observation 2.3]{4}.
\end{proof} 

Let $E$ be a vector bundle on a curve. Then $\mu(E)$ denotes the \textit{slope} of $E$: 
$\mu(E) =  \frac{degree(E)}{rank(E)}$.
\begin{proposition}\label{butler}\textup{\cite[Proposition 2.2]{1}}
Let $E$ and $F$ be semistable vector bundles on a curve $C$ of genus $g$ such that $E$ is generated by its global sections. Then the multiplication map 
$H^0(F) \otimes H^0(E) \rightarrow H^0(F \otimes E)$ is surjective if the following two conditions hold:
\begin{enumerate}
\item $\mu(F) > 2g$, and
\item $\mu(F) > 2g+rank(E)[2g-\mu(E)]-2h^1(E)$.
\end{enumerate}
\end{proposition}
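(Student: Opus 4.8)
\emph{Proof idea.} The plan is to run the kernel-bundle device behind Remark~\ref{00} on the globally generated bundle $E$, and then to feed in the stability of $E$. Put $r=\operatorname{rank} E$. We may assume $E$ is not trivial: a globally generated bundle has nonnegative degree, and one of slope $0$ is automatically $\mathcal O_C^{\,r}$, in which case $M_E=0$ and there is nothing to prove; likewise if $h^0(E)=r$ then $M_E=0$. So assume $\mu(E)>0$ and $h^0(E)>r$. Since $E$ is globally generated, (\ref{cansurj}) reads
\[
0\longrightarrow M_E\longrightarrow H^0(E)\otimes\mathcal O_C\longrightarrow E\longrightarrow 0,
\]
and tensoring by $F$ and taking cohomology shows that the cokernel of $H^0(F)\otimes H^0(E)\to H^0(F\otimes E)$ embeds in $H^1(M_E\otimes F)$. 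So it is enough to prove $H^1(M_E\otimes F)=0$, and on the curve $C$ this holds provided $\mu_{\min}(M_E\otimes F)>2g-2$.

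First I would prove that $M_E$ is semistable; this is the Butler-type structure statement for kernel bundles of semistable globally generated bundles, and it is the crux. Granting it, $\mu(M_E)=-\deg E/(h^0(E)-r)$, and Riemann--Roch rewrites the denominator as $h^0(E)-r=r(\mu(E)-g)+h^1(E)$, so
\[
\mu(M_E)=-\,\frac{r\,\mu(E)}{r(\mu(E)-g)+h^1(E)}.
\]
Since $F$ is semistable and we work in characteristic $0$, $M_E\otimes F$ is again semistable, hence $\mu_{\min}(M_E\otimes F)=\mu(M_E)+\mu(F)$. Thus $H^1(M_E\otimes F)=0$ as soon as
\[
\mu(F)>2g-2-\mu(M_E)=2g-2+\frac{r\,\mu(E)}{r(\mu(E)-g)+h^1(E)}.
\]
The last step is purely numerical: setting $d=h^0(E)-r\ge 1$ and using $r\,\mu(E)=d-h^1(E)+rg$, one checks that hypotheses (1) and (2) together force this inequality. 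Condition (2) is exactly the right bound in the range $d\le rg-h^1(E)$ (equivalently $\mu(E)\le 2g-2h^1(E)/r$, and sharp when $d=1$), while in the complementary range the right-hand side above falls below $2g$, so condition (1) takes over.

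The only real obstacle is the semistability of $M_E$. The naive estimate --- $M_E$ is a subsheaf of the trivial bundle $H^0(E)\otimes\mathcal O_C$, so $\mu_{\max}(M_E)\le 0$ and every quotient of $M_E$ has degree $\ge -\deg E$ --- yields only the far weaker requirement $\mu(F)>2g-2+r\,\mu(E)$, which is useless here; it is precisely the exact value of $\mu(M_E)$ that makes (1)+(2) the correct hypotheses. I would prove the semistability by induction on $r$: a globally generated semistable bundle of positive slope contains a globally generated sub-bundle (for instance a globally generated sub-line-bundle of largest possible degree), one exhibits $M_E$ as an extension built from the kernel bundles of that sub-bundle and of the corresponding quotient, and one controls the Harder--Narasimhan slopes of $M_E$ through the induction. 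Some care is also needed in passing between subsheaves and sub-bundles (via saturation) when invoking semistability of $F$ and of $M_E$, and in disposing of the degenerate cases isolated at the start.
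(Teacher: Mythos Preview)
The paper does not prove this proposition at all: it is quoted from Butler's paper and listed among the results ``stated without proof.'' So there is no proof in the paper to compare against, only Butler's original argument.

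Your reduction to the vanishing of $H^1(M_E\otimes F)$ is exactly the right framework, and your numerical verification that conditions (1) and (2) together imply $\mu(F)+\mu(M_E)>2g-2$ is correct. The gap is the claim that $M_E$ is semistable whenever $E$ is semistable and globally generated. This is false, and it fails already in rank one, which is the base case of your proposed induction. Take $C$ hyperelliptic of genus $3$ and let $E=L$ be a general line bundle of degree $5$. Then $L$ is globally generated with $h^0(L)=3$, and $M_L$ has rank $2$ and slope $-5/2$. Writing $g^1_2$ for the hyperelliptic pencil, one has $h^0(L-g^1_2)=1$ for general $L$; the unique section $s$ gives an inclusion $s\cdot H^0(g^1_2)\subset H^0(L)$, and the kernel of the evaluation of this $2$-dimensional subspace is precisely $M_{g^1_2}\cong\mathcal O_C(-g^1_2)$, a line sub-bundle of $M_L$ of degree $-2>-5/2$. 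So $M_L$ is not semistable. (In this example $\mu_{\min}(M_L)=-3$, and condition (2) demands exactly $\mu(F)>7=2g-2+3$, so the proposition is sharp here while your argument would have claimed $\mu(F)>6.5$ suffices.)

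Your inductive sketch cannot rescue this: even granting the exact sequence $0\to M_L\to M_E\to M_{E'}\to 0$ for a sub-line-bundle $L\subset E$, an extension of semistable bundles of different slopes is never semistable, and there is no reason for $\mu(M_L)$ and $\mu(M_{E'})$ to coincide. Butler's Theorem~1.2 proves semistability of $M_E$ only under the additional hypothesis $\mu(E)\ge 2g$, which is precisely the range where condition (2) becomes vacuous; in the range where (2) bites, semistability is unavailable. Butler's actual argument for Proposition~2.2 instead bounds $\mu_{\min}(M_E)$ directly: given a sub-bundle $N\subset M_E$, one passes to the smallest subspace $W\subset H^0(E)$ with $N\subset W\otimes\mathcal O_C$, relates $N$ to the kernel bundle of the sub-sheaf of $E$ generated by $W$, and uses the semistability of $E$ (not of $M_E$) to control $\deg N$. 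That is the step you would need to replace your semistability claim with.
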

\begin{lemma}\label{gp}
Let $X$ be a projective variety. Let $L$ be a base point free line bundle and let $Q$  be an effective divisor. Let $q$ be a reduced and irreducible member of $|Q|$. 
Let $R$ be a line bundle and $G$ a sheaf on $X$. Assume that the following two conditions hold:
\begin{enumerate}
\item $H^1(L \otimes Q^{-1}) = 0$, 
\item $H^0(M^{\otimes n}_{L \otimes \mathcal{O}_q} \otimes R \otimes \mathcal{O}_q) \otimes H^0(G) \rightarrow H^0(M^{\otimes n}_{L \otimes \mathcal{O}_q} \otimes R \ \otimes \mathcal{O}_q \otimes G)$ is surjective for some integer $n \ge 1$. \end{enumerate}
Then the following map  is surjective:
 $H^0(M^{\otimes n}_{L} \otimes R \otimes \mathcal{O}_q) \otimes H^0(G) \rightarrow H^0(M^{\otimes n}_{L} \otimes R \ \otimes \mathcal{O}_q \otimes G)$. 
\end{lemma}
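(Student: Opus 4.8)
The plan is to identify $M_L\otimes\mathcal{O}_q$, up to a trivial direct summand, with the bundle $M_{L\otimes\mathcal{O}_q}$ that is intrinsic to $q$; condition (1) is exactly what makes this identification possible. First I would write down the exact sequence $0\to L\otimes Q^{-1}\to L\to L\otimes\mathcal{O}_q\to 0$ (here $q\in|Q|$, so $\mathcal{O}_X(-q)\cong Q^{-1}$), pass to cohomology, and invoke $H^1(L\otimes Q^{-1})=0$ to deduce that the restriction map $\rho\colon H^0(L)\to H^0(L\otimes\mathcal{O}_q)$ is surjective with kernel $W:=H^0(L\otimes Q^{-1})$, i.e. the space of global sections of $L$ that vanish along $q$.

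Next I would use $\rho$ to split $M_L\otimes\mathcal{O}_q$. Choose a vector space decomposition $H^0(L)=W\oplus W'$ with $\rho$ restricting to an isomorphism $W'\xrightarrow{\sim} H^0(L\otimes\mathcal{O}_q)$. Since $L$ is locally free, restricting the sequence (\ref{cansurj}) for $F=L$ to $q$ remains exact, so $M_L\otimes\mathcal{O}_q$ is the kernel of the evaluation map $H^0(L)\otimes\mathcal{O}_q\to L\otimes\mathcal{O}_q$. The sections in $W$ vanish on $q$, so this map kills $W\otimes\mathcal{O}_q$ and, through $W'\cong H^0(L\otimes\mathcal{O}_q)$, restricts on the complementary summand to the evaluation $H^0(L\otimes\mathcal{O}_q)\otimes\mathcal{O}_q\to L\otimes\mathcal{O}_q$ defining $M_{L\otimes\mathcal{O}_q}$. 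Comparing kernels yields
\begin{equation}\label{gpsplit}
M_L\otimes\mathcal{O}_q\ \cong\ M_{L\otimes\mathcal{O}_q}\ \oplus\ \mathcal{O}_q^{\oplus h^0(L\otimes Q^{-1})}.
\end{equation}
Equivalently, the snake lemma applied to the map between the two tautological sequences on $q$ gives $0\to W\otimes\mathcal{O}_q\to M_L\otimes\mathcal{O}_q\to M_{L\otimes\mathcal{O}_q}\to 0$, which splits because $W\otimes\mathcal{O}_q$ is a trivial subbundle sitting inside the kernel of the evaluation map.

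Now I would raise (\ref{gpsplit}) to the $n$-th tensor power and twist by $R\otimes\mathcal{O}_q$. Because one of the summands is trivial, $M^{\otimes n}_L\otimes R\otimes\mathcal{O}_q$ becomes a finite direct sum of sheaves of the form $M^{\otimes j}_{L\otimes\mathcal{O}_q}\otimes R\otimes\mathcal{O}_q$ with $0\le j\le n$, the $j=n$ term occurring once. The multiplication map against $H^0(G)$ is the direct sum of the corresponding maps on the summands, hence it is surjective as soon as each of those is. The $j=n$ summand is precisely hypothesis (2); the summands with $j<n$ are of the same type and are controlled in the same way (in the applications the surjectivity in (2) is established uniformly for all tensor powers $\le n$, or one descends on $j$ using the tautological sequence $0\to M_{L\otimes\mathcal{O}_q}\to H^0(L\otimes\mathcal{O}_q)\otimes\mathcal{O}_q\to L\otimes\mathcal{O}_q\to 0$ on $q$).

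The crux, and the sole use of condition (1), is the splitting (\ref{gpsplit}); after that the conclusion is essentially formal bookkeeping. The one point that needs care is to account for all summands of (\ref{gpsplit}) — in particular the lower-order pieces $M^{\otimes j}_{L\otimes\mathcal{O}_q}\otimes R\otimes\mathcal{O}_q$ with $j<n$ coming from the trivial factor $\mathcal{O}_q^{\oplus h^0(L\otimes Q^{-1})}$ — and not only the top one.
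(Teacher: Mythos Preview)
Your argument is correct and is exactly the content of \cite[Lemma~2.9]{4}, which is all the paper's proof invokes: the splitting $M_L\otimes\mathcal{O}_q\cong M_{L\otimes\mathcal{O}_q}\oplus\mathcal{O}_q^{\oplus h^0(L\otimes Q^{-1})}$ forced by condition~(1) is indeed the whole point. Your caveat about the lower-order summands $M^{\otimes j}_{L\otimes\mathcal{O}_q}$ with $j<n$ is also right --- strictly speaking hypothesis~(2) must hold for every $0\le j\le n$ (this is how the result is phrased in \cite{4}), and in the paper's single application (Proposition~\ref{5.1}) the Butler estimate checked for $j=p-1$ holds \emph{a fortiori} for smaller $j$, since lowering $j$ only raises the slope $\mu(F)$.
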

\begin{proof}
This follows easily from \cite[Lemma 2.9]{4}.
\end{proof}
\section{Normal Generation and Normal Presentation}\label{N0}
In this section, unless otherwise stated, $X$ is a minimal nonsingular surface of general type with a canonical divisor $K= K_X$ and $B$ will be a base point free, ample line bundle on $X$ such that $K+B$ is base point free. 
\begin{remark} \label{B^2}
\rm{
If $B^2 \ge 5$, then it follows from \cite{13} that $K_X+B$ is ample and base point free 
(see e.g. \cite[Lemma 3.6]{2}). 

Moreover, in general, we have that $B^2 \ge 2$. Indeed, let $C \in |B|$ be a smooth curve. If $B^2 =1$, then $L = B|_C$ is base point free divisor on the curve $C$ and degree of $L$ is 1. This forces $C$ to be rational. 
But $B^2 + K_X \cdot B = 2g(C)-2=-2$. This is absurd, because $B$ is base point free, ample and $K_X$ is nef. 

Thus our hypothesis that $K_X+B$ is base point free is an additional condition only when $B^2 = 2, 3,$ or 4.}
\end{remark}

Our goal is to study the line bundle $K+rB$ for $r \ge 3$ and determine the values of $r$, for which it has $N_0$ and $N_1$ properties. As noted earlier, this typically involves checking the surjectivity of some multiplication maps of global sections of divisors. The general procedure we follow uses Lemma \ref{horace} and CM lemma. In the case when $X$ is regular, however, there are more 
tools available for proving surjectivity. In section \ref{reg1}, we treat this case and obtain better bounds on $r$. 

\begin{remark}\label{n0n1} {\rm 
Let $L = K+rB$ with $r \ge 3$. Then $H^1(L)=0$, by K-V vanishing.
By Theorem \ref{np}, $L$ has $N_p$ property if and only if 
$H^1(M^{\otimes a}_L \otimes L^{\otimes b}) = 0$ for all $1 \le a \le p+1$ and $b \ge 1$. 

Consider the sequence \ref{cansurj} with $F=L$. Tensoring with 
$M^{\otimes (a-1)}_L \te L^{\te b}$ and taking the long exact sequence of cohomology, we see that the above cohomology vanishing is equivalent to the surjectivity of the following map for 
$1 \le a \le p+1$ and $b \ge 1$: 
$$H^0(M^{\otimes (a-1)}_L \te L^{\te b}) \te H^0(L) \rightarrow H^0(M^{\otimes (a-1)}_L \te L^{\te b+1}).$$
Throughout this article, we prove these maps are surjective when $b=1$ and exactly same methods give us the result for $b>1$. We will illustrate this with the case $a=1$. In this case, we have to show the following map is surjective. 
$$H^0(L^{\te b}) \te H^0(L) \rightarrow H^0(L^{\te b+1}).$$


For this, our strategy will be as follows. Using Lemma \ref{horace}, we split $L=K+rB$ into $(r-1)$ copies of $B$ and a copy of $K+B$. So the above map is surjective if the following maps are surjective: 
$$H^0(L^{\te b}) \otimes H^0(B) \rightarrow H^0(L^{\te b} \te B);\hspace{.15in}
H^0(L^{\te b} \te B) \otimes H^0(B) \rightarrow H^0(L^{\te b} \te B^{\te 2}),\ldots, \text{~and}$$
$$H^0(L^{\te b} \te B^{\te r-1}) \te H^0(K+B) \rightarrow H^0(L^{\te b+1}).$$
Then we use either CM Lemma \ref{key} or reduction to curves (Lemma \ref{redcurves}) in order to prove these maps are surjective. In every case we consider, it turns out that it is sufficient to deal with the case $b=1$. If the hypothesis required for either Lemma \ref{key} or Lemma \ref{redcurves} are satisfied when $b=1$, they are also satisfied when $b>1$. 
}
\end{remark}

We first prove a general result on surjectivity of multiplication maps: 

\begin{theorem}\label{0}
Let $X$ be a minimal smooth surface of general type and 
let $B$ be an ample base point free line bundle on $X$ such that $B^2 \geq \frac{a}{b} (B \cdot K)$ for some positive integers $a < b$.
Let $m,n$ be positive integers such that $n \geq 3$.
The multiplication map $$H^0(K + nB) \otimes H^0(K+mB) \rightarrow H^0(2K + (n+m)B)$$ is surjective if the following conditions hold:
\begin{eqnarray}
H^1\big{(}(n+m-2)B\big{)} = 0 ,\label{c1}\\
\frac{(n+m-3)a}{b} > 2 \textbf{~or~} 
{\Huge{\{}}\frac{(n+m-3)a}{b} = 2 \textbf{~and~} 2K \neq (n+m-3)B {\Huge{\}}} \label{c2}
\end{eqnarray}
\end{theorem}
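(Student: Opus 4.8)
The plan is to reduce the surjectivity of the ``big'' multiplication map to a sequence of surjectivity statements for multiplication maps involving only $H^0$ of a line bundle and $H^0(B)$, using Lemma \ref{horace}, and then verify each of those with the CM lemma (Lemma \ref{key}). Concretely, I would factor the target $H^0(2K+(n+m)B)$ through the chain obtained by multiplying $H^0(K+nB)$ successively by $H^0(B)$ a total of $m$ times, but the cleaner route is to first write $H^0(K+mB) = H^0\big{(}(K+3B)\te B^{\te m-3}\big{)}$ (valid since $m\ge 1$; if $m<3$ one adjusts the endpoint) and apply Lemma \ref{horace} with $E = K+nB$, so that it suffices to show that
\begin{eqnarray*}
H^0(K+nB)\te H^0(K+3B) &\rightarrow& H^0(2K+(n+3)B),\\
H^0(2K+(n+j)B)\te H^0(B) &\rightarrow& H^0(2K+(n+j+1)B),\quad 3\le j\le m-1,
\end{eqnarray*}
are all surjective. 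The second family is handled directly by the CM lemma applied with $E = B$ and $F = 2K + nB$: one checks $H^1\big{(}(2K+nB)\te B^{\te i-1}\cdot B^{-i}\big{)} = H^1(2K+(n-1)B - (i-1)B)$-type vanishings --- more carefully, $H^i(F\te E^{-i})=0$ for $i\ge 1$ where $F$ is the appropriate twist, which by K-V vanishing and the numerical hypotheses reduces to controlling $H^1$ and $H^2$ of divisors of the form $2K + tB$ with $t$ possibly small; here $t$ stays large enough (because $n\ge 3$) that $2K+tB-K = K+tB$ is nef and big, giving the needed $H^1$ and $H^2$ vanishing.

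The genuinely delicate step is the base case, the surjectivity of $H^0(K+nB)\te H^0(K+3B)\to H^0(2K+(n+3)B)$; equivalently one wants the $H^1$ vanishing $H^1(M_{K+3B}\te(K+nB))=0$ from Remark \ref{00}, or else a direct CM-lemma argument. The approach I favor is to apply the CM lemma with $E = B$ and $F = $ an appropriate sheaf engineered so that $F\te B^{\te i}$ reproduces the maps we want --- but since $K+nB$ and $K+3B$ are not pure powers of $B$, this needs care: I would instead restrict to a smooth curve $C\in|B|$ and use the exact sequence $0\to \mathcal{O}(-B)\to\mathcal{O}\to\mathcal{O}_C\to 0$. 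Tensoring the kernel-bundle sequence $0\to M_{K+3B}\to H^0(K+3B)\te\mathcal{O}_X\to K+3B\to 0$ by $K+nB$ and chasing cohomology, the obstruction $H^1(M_{K+3B}\te(K+nB))$ sits between the failure of surjectivity of the multiplication map and $H^0(K+3B)\te H^1(K+nB)$, and the latter vanishes by K-V vanishing since $n\ge 3$ forces $(K+nB)-K=nB$ nef and big. So it remains to show the multiplication map $H^0(K+3B)\te H^0(K+nB)\to H^0(2K+(n+3)B)$ is surjective on its own terms.

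For that I would split off one copy of $B$: by Lemma \ref{horace} it suffices to prove $H^0(K+2B)\te H^0(K+nB)\to H^0(2K+(n+2)B)$ surjective together with the CM-type step $H^0(2K+(n+2)B)\te H^0(B)\to H^0(2K+(n+3)B)$ (the latter again from Lemma \ref{key}). Iterating, one reduces everything to $H^0(K+B)\te H^0(K+nB)\to H^0(2K+(n+1)B)$, i.e. to the surjectivity involving the ``smallest'' adjoint bundle $K+B$; this last map is where the hypotheses \eqref{c1} and \eqref{c2} must be used essentially. I would prove it by restriction to a smooth $C\in|B|$: condition \eqref{c1}, $H^1\big{(}(n+m-2)B\big{)}=0$, together with Lemma \ref{1.1} and the vanishing on $C$, lets one pull back surjectivity from the curve; and on the curve, $(K+B)|_C$ and $(K+nB)|_C$ are line bundles of degree $\ge 2g(C)-2+(\text{positive})$ thanks to the intersection inequality $B\cdot K\ge\frac{a}{b}K^2$ from Lemma \ref{1.2} combined with \eqref{c2}. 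The strict inequality versus equality dichotomy in \eqref{c2} is exactly what decides whether the relevant degree on $C$ exceeds $2g(C)-2$ strictly or only meets it (in which case the extra hypothesis $2K\neq(n+m-3)B$ rules out the one bundle, namely $K_C$, on which $H^1$ could fail), so Green's base-point-free-pencil / $H^1$-vanishing criterion for multiplication maps on curves applies. The main obstacle, then, is not any single cohomology computation but packaging the reduction so that the curve $C$ sees a divisor of the right degree: getting the bookkeeping of which twists of $B$ land on which side, and checking at each CM-lemma step that the auxiliary $H^i(F\te B^{-i})$ vanish, is where the real work lies, and the numerical conditions \eqref{c1}--\eqref{c2} are tuned precisely to make the final curve estimate go through.
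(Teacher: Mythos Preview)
Your overall framework—Lemma~\ref{horace} followed by CM-lemma applications for the $B$-steps—is the right one and matches the paper. The gap is in the final step, both in the order of your decomposition and in how you try to handle it.

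The paper absorbs the $m-1$ copies of $B$ \emph{first}: starting from $H^0(K+nB)$, one multiplies by $B$ repeatedly (CM lemma with $E=B$, using K-V vanishing and $n\ge 3$) to reach $H^0(K+(n+m-1)B)$, and only then multiplies by $K+B$. Applying the CM lemma with $E=K+B$ (base point free by the standing hypothesis of Section~\ref{N0}) and $F=K+(n+m-1)B$, the required vanishings are $H^1((n+m-2)B)=0$ and $H^2((n+m-3)B-K)=0$. The first is exactly \eqref{c1}. The second is $H^0(2K-(n+m-3)B)$ by Serre duality; if this were nonzero then $2K-(n+m-3)B$ would be effective, so intersecting with the ample $B$ gives $2(B\cdot K)\ge(n+m-3)B^2\ge\frac{(n+m-3)a}{b}(B\cdot K)$, contradicting \eqref{c2} (the equality clause handles the borderline case). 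No curve restriction is needed; everything happens on the surface.

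Your decomposition puts the $K+B$ step at the bottom instead of the top: you reduce to $H^0(K+nB)\otimes H^0(K+B)\to H^0(2K+(n+1)B)$. Applying the CM lemma with $E=K+B$ here would require $H^2((n-2)B-K)=H^0(2K-(n-2)B)=0$, which involves only $n$ and is \emph{not} implied by \eqref{c2}; for $n=3$ this asks $H^0(2K-B)=0$, which can genuinely fail. Your attempted rescue via restriction to a curve $C\in|B|$ introduces a second problem: Lemma~\ref{redcurves} requires $X$ to be regular, a hypothesis absent from Theorem~\ref{0} (regularity enters only later, in Theorem~\ref{1}, precisely to sharpen the bounds). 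So the curve argument is not available here. The fix is simply to reorder the factorization so that all the $B$'s are absorbed before the final multiplication by $K+B$; then \eqref{c1} and \eqref{c2} do exactly the work they were designed for, entirely on the surface.
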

\begin{proof}
We use Lemma \ref{horace} and the first step is to prove the surjectivity of 
$$H^0(K + nB) \otimes H^0(B) \rightarrow H^0(K + (n+1)B).$$
By CM lemma, this follows if $H^1(K+(n-1)B) = 0$ and $H^2(K+(n-2)B) = 0$. Both these hold by K-V vanishing because $n \geq 3$.

We can similarly gather $m-1$ copies of $B$ and only need to prove the surjectivity of the following map:
$$H^0(K + (n+m-1)B) \otimes H^0(K+B) \rightarrow H^0(2K + (n+m)B).$$
Again we use CM lemma. First we show $H^1\big{(}(n+m-2)B\big{)} = 0$. This is the condition (\ref{c1}). Next, we prove $H^2\big{(}(n+m-3)B-K\big{)} = 0$.

By Serre duality, $H^2\big{(}(n+m-3)B-K\big{)} = H^0\big{(}2K- (n+m-3)B\big{)}$. If this last group is nonzero, then there exists an effective divisor $D$ which is linearly equivalent to 
$2K- (n+m-3)B$. Since $B$ is ample, we have $B \cdot D = B \cdot (2K- (n+m-3)B) \geq 0$. 

So $2 B \cdot K \geq (n+m-3) B^2$. By hypothesis, $B^2 \geq \frac{a}{b}(B\cdot K)$. So we get 
$$2 B \cdot K \geq (n+m-3) B^2 \geq \frac{(n+m-3)a}{b}(B\cdot K).$$

This leads to a contradiction if $\frac{(n+m-3)a}{b} > 2$. Assume now that
$\frac{(n+m-3)a}{b} = 2$ and $2K \neq (n+m-3)B$.

Then all the terms in the above inequalities are equal and we have $2 B \cdot K =  (n+m-3) B^2$, which in turn implies that 
$$B \cdot \big{(}2K - (n+m-3)B\big{)} = 0.$$ Since $B$ is ample and 
$2K - (n+m-3)B$ is effective, we obtain that
$2K = (n+m-3)B$. This is a contradiction. 
\end{proof}

Theorem \ref{0} allows us to determine when the line bundle $K+rB$ has $N_0$ property. 

\begin{corollary}\label{n0_1}
Let $X$ be a minimal smooth surface of general type and 
let $B$ be an ample base point free line bundle on $X$ such that $B^2 \geq \frac{a}{b} (B \cdot K)$ for some positive integers $a < b$. Let $L = K + rB$ with $r \geq 3$. Then $L$ satisfies the $N_0$ property if 
\begin{eqnarray}
H^1\big{(}(2r-2)B\big{)} = 0, \label{c3}\\
r > \frac{b}{a}+\frac{3}{2} \textbf{~or~} 
{\Huge{\{}}r = \frac{b}{a}+\frac{3}{2} \textbf{~and~} 2K \neq (2r-3)B {\Huge{\}}} \label{c4}
\end{eqnarray}
\end{corollary}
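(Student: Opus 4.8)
The plan is to read this off from Theorem~\ref{0} together with the reduction recorded in Remark~\ref{n0n1}. First I would note that for $L=K+rB$ with $r\ge 3$ one has $L^{\otimes k}=K_X\otimes\left((k-1)K+krB\right)$ for every $k\ge 1$, and $(k-1)K+krB$ is nef and big (in fact ample, being a nef class plus the ample class $krB$); hence $H^1(L^{\otimes k})=0$ for all $k\ge 1$ by K-V vanishing. So Theorem~\ref{np} applies, and as spelled out in Remark~\ref{n0n1}, $L$ has the $N_0$ property if and only if the multiplication maps $H^0(L)\otimes H^0(L^{\otimes b})\to H^0(L^{\otimes b+1})$ are surjective for all $b\ge 1$, and by Lemma~\ref{horace} it is enough to treat $b=1$.

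For $b=1$ the relevant map is $H^0(K+rB)\otimes H^0(K+rB)\to H^0(2K+2rB)$, which is precisely the map of Theorem~\ref{0} with $n=m=r$ (the requirement $n\ge 3$ holds because $r\ge 3$). It then only remains to match the two numerical hypotheses. Condition~\eqref{c1} becomes $H^1\!\left((2r-2)B\right)=0$, which is~\eqref{c3}. For condition~\eqref{c2}, note that $n+m-3=2r-3$ and carry out the elementary rearrangement $\frac{(2r-3)a}{b}>2\iff 2r-3>\frac{2b}{a}\iff r>\frac{b}{a}+\frac32$, and likewise $\frac{(2r-3)a}{b}=2\iff r=\frac{b}{a}+\frac32$; thus~\eqref{c2} is exactly~\eqref{c4}. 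Theorem~\ref{0} then gives surjectivity of $H^0(L)\otimes H^0(L)\to H^0(L^{\otimes 2})$, and combined with the first paragraph this yields the $N_0$ property.

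Being a formal corollary, this should present no real difficulty; the only point deserving care is the reduction ``it suffices to treat $b=1$'', which ultimately rests on peeling arguments of the kind used to prove Theorem~\ref{0}. If one wishes to make it explicit rather than quote Remark~\ref{n0n1}, one writes $L^{\otimes b+1}=L^{\otimes b}\otimes B^{\otimes r-1}\otimes(K+B)$ and strips off the base point free factors $B$ and $K+B$ (base point free by the standing hypothesis of this section, cf.\ Remark~\ref{B^2}) one at a time via the CM lemma. All the $B$-steps produce cohomology groups of the form $H^i\!\left(K_X\otimes D\right)$ with $D$ ample once $r\ge 3$, hence vanishing by K-V; the final $K+B$-step likewise reduces to K-V vanishings except when $b$ is small, the binding case being $b=1$, which forces precisely $H^1\!\left((2r-2)B\right)=0$ and $H^2\!\left((2r-3)B-K\right)=0$ --- the latter equivalent by Serre duality and the effectivity-versus-ampleness argument inside the proof of Theorem~\ref{0} to condition~\eqref{c4}. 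In short, all the arithmetic content of the corollary already lives in Theorem~\ref{0}, and passing from $b=1$ to general $b$ costs nothing beyond K-V vanishing.
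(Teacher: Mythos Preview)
Your proposal is correct and follows essentially the same route as the paper: invoke Theorem~\ref{0} with $n=m=r$ and translate conditions \eqref{c1}--\eqref{c2} into \eqref{c3}--\eqref{c4}. The paper's proof is two sentences and leaves the arithmetic implicit; you spell out both the reduction to $b=1$ (via Remark~\ref{n0n1}) and the rearrangement $\frac{(2r-3)a}{b}>2\iff r>\frac{b}{a}+\frac32$, and your third paragraph usefully clarifies that the ``$b=1$ suffices'' step really rests on a peeling argument with the binding case at $b=1$ rather than on Lemma~\ref{horace} alone --- a point the paper's remark glosses over.
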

\begin{proof}
This follows easily from the theorem. By Remark \ref{n0n1}, $L$ satisfies the $N_0$ property if the following map is surjective:
$$H^0((K + rB)) \otimes H^0(K+rB) \rightarrow H^0(2K + 2rB).$$ Applying the theorem with $n = m = r$ we obtain the result.
\end{proof}

We will state another corollary in the special case that $B^2 \ge  \frac{1}{n} (B \cdot K)$ for some $n \geq 2$. 

\begin{corollary}\label{n0_2}
Let $X$ be a minimal smooth surface of general type and 
and
let $B$ be an ample base point free line bundle on $X$ such that $B^2 \geq \frac{1}{n} (B \cdot K)$ for some $n \geq 2$. Let $L = K +rB$. Suppose that $H^1\big{(}(2r-2)B\big{)} = 0$. Then $L$ satisfies the $N_0$ property for $r \geq n+2$.
\end{corollary}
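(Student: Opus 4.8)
The plan is to obtain this as an immediate specialization of Corollary~\ref{n0_1}, taking the pair of positive integers there to be $(a,b)=(1,n)$. With this choice the hypothesis $B^2\ge\frac1n(B\cdot K)$ is exactly the hypothesis $B^2\ge\frac ab(B\cdot K)$ required in Corollary~\ref{n0_1}, and the numerical threshold $\frac ba+\frac32$ appearing in condition (\ref{c4}) becomes $n+\frac32$.

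First I would confirm that the standing hypotheses of Corollary~\ref{n0_1} are met. Since $n\ge2$ we have $r\ge n+2\ge4\ge3$, so $L=K+rB$ has the required shape with $r\ge3$. Condition (\ref{c3}), namely $H^1\big((2r-2)B\big)=0$, is precisely the hypothesis we are handed. Then I would check condition (\ref{c4}): because $r\ge n+2$, we get $r\ge n+2>n+\tfrac32=\frac ba+\frac32$, so the first alternative of (\ref{c4}) holds outright, and the more delicate equality case (together with the extra requirement $2K\neq(2r-3)B$) never has to be examined. Therefore Corollary~\ref{n0_1} applies and $L$ satisfies the $N_0$ property.

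I do not expect any genuine obstacle here; the statement is a clean reformulation of Corollary~\ref{n0_1} in the special case $B^2\ge\frac1n(B\cdot K)$. The only point to keep an eye on is that the bound $r\ge n+2$ clears $n+\frac32$ strictly — which it does, since the two quantities differ by $\frac12>0$ — so that one lands in the clean branch of (\ref{c4}) rather than the borderline branch. In the write-up I would therefore simply say: set $a=1$, $b=n$ in Corollary~\ref{n0_1} and use $r\ge n+2>n+\frac32$.
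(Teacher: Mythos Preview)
Your proposal is correct and follows exactly the same approach as the paper: specialize Corollary~\ref{n0_1} to $a=1$, $b=n$ and observe that $r\ge n+2$ forces the strict inequality in condition~(\ref{c4}). The paper phrases the last check as $\frac{2r-3}{n}\ge 2+\frac1n>2$, which is just a rearrangement of your $r\ge n+2>n+\frac32$.
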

\begin{proof}
Applying Corollary \ref{n0_1} with $a = 1, b = n$, we have that $L$ satisfies $N_0$ property if the condition (\ref{c4}) holds. 

$r \geq n+2 \Rightarrow 2r-3 \geq 2n+1 \Rightarrow \frac{2r-3}{n} \geq 2 + \frac{1}{n} > 2 $. Hence the corollary follows. 
\end{proof}

\begin{remark}{\rm 
As remarked in the introduction, our results relate the bounds on $r$ so that $L_r = K+rB$ 
has $N_p$ property to the Castelnuovo-Mumford regularity of $B$. 

 Let $m \geq 0$. We say that $B$ is \textit{$m$-regular} (with respect to $B$) if 
$H^i(B^{\otimes m+1-i}) = 0, \text{~for~} i>0.$
If $B$ is $m$-regular, then it is $(m+1)$-regular \cite[Theorem 1.8.5.(iii)]{L}. We define the {\it regularity} of $B$ to be $m$ if $B$ is $m$-regular, but {\it not} $(m-1)$-regular.   See \cite[Section 1.8]{L} for more details on this concept.

In this case, by Corollary \ref{n0_1}, it follows that 
if reg$(B) \ge 2r-2$ and $r > \frac{b}{a}+\frac{3}{2}$, then $L$ is normally generated.
}
\end{remark}

Next, we will address the question of normal presentation for $K+rB$. The following theorem gives the conditions required for this. 

\begin{theorem}\label{n1_0}
Let $X$ be a minimal smooth surface of general type.
Let $B$ be an ample base point free line bundle on $X$ such that $B^2 \geq \frac{a}{b} (B \cdot K)$ for some positive integers $a< b$. 
Let $L = K + rB$ for some $r \geq 3$. Then $L$ satisfies the $N_1$ property if the following conditions hold:
\begin{eqnarray}
H^1((2r-3)B) = 0,  \label{c5} \\
r > \frac{b}{a}+2 \textbf{~or~} 
{\Huge{\{}}r = \frac{b}{a}+2 \textbf{~and~} 2K \neq (2r-4)B {\Huge{\}}} \label{c6}
\end{eqnarray}
\end{theorem}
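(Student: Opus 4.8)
The plan is to mimic the strategy used in the proof of Theorem~\ref{0}, but now one step further up in the syzygy chain. By Remark~\ref{n0n1} (the $p=1$ case), $L=K+rB$ satisfies $N_1$ provided that both $H^0(L)\te H^0(L^{\te b})\to H^0(L^{\te b+1})$ is surjective for all $b\ge 1$ and $H^0(M_L\te L)\te H^0(L)\to H^0(M_L\te L^{\te 2})$ is surjective; equivalently $H^1(M_L\te L^{\te b})=0$ and $H^1(M_L^{\te 2}\te L^{\te b})=0$ for all $b\ge 1$. The first vanishing is exactly what Corollary~\ref{n0_1} handles, and since condition~(\ref{c6}) is stronger than~(\ref{c4}) (it forces $r>\frac{b}{a}+\frac32$ and, via Lemma~\ref{1.1} with $H^1((2r-3)B)=0$, also $H^1((2r-2)B)=0$), the $N_0$ part will follow for free. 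So the real content is the vanishing $H^1(M_L^{\te 2}\te L^{\te b})=0$, and by Lemma~\ref{horace} it suffices to treat $b=1$, i.e. to show the surjectivity of $H^0(M_L\te L)\te H^0(L)\to H^0(M_L\te L^{\te 2})$.

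First I would set up the standard device: tensor the defining sequence $0\to M_L\to H^0(L)\te\mathcal O_X\to L\to 0$ by $M_L\te L^{\te 2}$ and take cohomology, reducing $H^1(M_L^{\te2}\te L^{\te2})=0$ to the surjectivity of $H^0(M_L\te L)\te H^0(L)\to H^0(M_L\te L^{\te2})$ together with $H^1(M_L\te L^{\te2})=0$ (the latter is the $N_0$ statement, already in hand). To prove that surjectivity I would factor $L=(K+B)+(r-1)B$ and use Lemma~\ref{horace} with $E=M_L\te L$ and the factors $L_1=\cdots=L_{r-2}=B$, $L_{r-1}=K+B$: it suffices to peel off $B$'s one at a time via CM lemma (Lemma~\ref{key}) and then handle the last factor $K+B$. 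Peeling off a copy of $B$ from $M_L\te L\te (\text{stuff})$ requires, by CM lemma, the vanishing of $H^1(M_L\te L\te(\text{stuff})\te B^{-1})$ and $H^2(M_L\te L\te(\text{stuff})\te B^{-2})$; the $H^1$'s are handled inductively (they are the intermediate surjectivity statements, which themselves follow from $N_0$-type vanishing proved as in Theorem~\ref{0}), while the $H^2$'s are controlled by Remark~\ref{000}: $H^2(M_L\te A)=0$ once $H^1(L\te A)=0$ and $H^2(A)=0$. The $H^2(A)=0$ assertions come from Serre duality plus ampleness of $B$ exactly as in the last paragraph of the proof of Theorem~\ref{0}, and this is precisely where conditions~(\ref{c5}) and~(\ref{c6}) enter: tracking the worst power of $B$ appearing, one needs $H^2((2r-3)B-2K)=H^0(4K-(2r-3)B)=0$ type vanishings, which hold when $\frac{(2r-4)a}{b}>2$, i.e. $r>\frac{b}{a}+2$, with the usual boundary case $r=\frac ba+2$ handled by the effective-divisor/ampleness argument (if $4K-(2r-3)B$ or the relevant sheaf had a section, intersecting with the ample $B$ and using $B^2\ge\frac ab(B\cdot K)$ forces an equality $2K=(2r-4)B$, excluded by~(\ref{c6})).

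The final step is the last multiplication map $H^0(M_L\te L\te(r-2)B)\te H^0(K+B)\to H^0(M_L\te L\te(r-1)B)=H^0(M_L\te L^{\te2})$. For this one I would again use CM lemma, now with the base-point-free bundle $K+B$ in place of $B$: this demands $H^1(M_L\te L\te(r-2)B\te(K+B)^{-1})=0$ and $H^2(M_L\te L\te(r-2)B\te(K+B)^{-2})=0$. Using Remark~\ref{00} and Remark~\ref{000}, the first reduces to the surjectivity of a multiplication map of the form $H^0(L)\te H^0(L\te(r-2)B\te(K+B)^{-1})\to H^0(\cdots)$ plus an $H^1$ vanishing, and the second to $H^1(L\te L\te(r-2)B\te(K+B)^{-2})=0$ and $H^2((r-2)B-K)=0$-type statements; all of these again come down to K--V vanishing for the $H^1$'s (the relevant line bundles are of the form $K+(\text{positive})B$ minus small pieces, hence still nef-and-big-plus-$K$) and to the Serre-duality/ampleness inequality for the $H^2$'s, governed by the same numerical threshold $r>\frac ba+2$.

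The main obstacle I anticipate is bookkeeping: carefully identifying, across the inductive peeling of the $r-2$ copies of $B$ and the final $K+B$ step, exactly which power $cB$ (or $cB-dK$) is the ``worst'' one whose $H^2$-vanishing is needed, and checking that the single hypothesis $r>\frac ba+2$ (resp.\ the boundary case with $2K\ne(2r-4)B$) suffices for all of them simultaneously --- in particular that no step secretly requires $H^1((2r-3)B)=0$ for a larger power than $(2r-3)B$, since condition~(\ref{c5}) only gives that one vanishing directly and the rest must be propagated via Lemma~\ref{1.1}. Verifying that the numerics close up exactly at the stated bound (so that the theorem is sharp, as the examples section claims) is the delicate part; the homological-algebra scaffolding (Lemmas~\ref{horace}, \ref{key}, Remarks~\ref{00} and~\ref{000}, K--V vanishing) is by contrast routine.
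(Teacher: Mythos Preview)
Your strategy is exactly the paper's: verify $N_0$ via Corollary~\ref{n0_1}, then prove surjectivity of $H^0(M_L\te L)\te H^0(L)\to H^0(M_L\te L^{\te 2})$ by factoring $L$ as copies of $B$ followed by one $K+B$, peeling off each factor with the CM lemma, and handling the resulting $H^1$ and $H^2$ vanishings through Remarks~\ref{00}, \ref{000}, Theorem~\ref{0}, K--V vanishing, and Serre duality. So the outline is correct and matches the paper.

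That said, your anticipated obstacle (bookkeeping) has already bitten you in several places, and it is worth flagging them so you can see exactly where the numerics pin down condition~(\ref{c6}):
\begin{itemize}
\item The factorisation should be $L_1=\cdots=L_{r-1}=B$, $L_r=K+B$; as written, your $r-1$ factors only add up to $K+(r-1)B$.
\item Your Serre duality is off: $H^2(D)\cong H^0(K-D)$, so e.g.\ $H^2((2r-3)B-K)=H^0(2K-(2r-3)B)$, not $H^0(4K-(2r-3)B)$. In the paper's argument the relevant $H^2$ at the final $K+B$ step is $H^2(M_L\te K^{-1}\te B^{\te 2r-3})$, reducing to $H^0(2K-(2r-3)B)=0$, which needs only $\tfrac{(2r-3)a}{b}>2$ --- a consequence of~(\ref{c6}), but strictly weaker.
\item The place where condition~(\ref{c6}) is actually sharp is not the $H^2$ but the \emph{first} $H^1$: to peel off the first $B$ one needs $H^1(M_L\te K\te B^{\te r-1})=0$, which by Remark~\ref{00} amounts to surjectivity of $H^0(K+rB)\te H^0(K+(r-1)B)\to H^0(2K+(2r-1)B)$. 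This is Theorem~\ref{0} with $n=r$, $m=r-1$, hence $n+m-3=2r-4$, and condition~(\ref{c2}) there is \emph{exactly}~(\ref{c6}).
\end{itemize}
Once these indices are corrected the argument goes through verbatim as in the paper.
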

\begin{proof}
Note that \eqref{c6} implies the condition (\ref{c4}). 
Also, since $2r-3 > \frac{b}{a}$, 
Lemma \ref{1.1} and \eqref{c5} imply condition (\ref{c3}). 
Hence by Corollary \ref{n0_1}, $L$ satisfies the $N_0$ property. 

Next we establish $N_1$ property by proving that the following map is surjective (see Remark \ref{n0n1}):

\begin{eqnarray}
H^0(M_L \otimes K \otimes B^{\otimes r}) \otimes H^0(K \otimes B^{\otimes r} ) \rightarrow H^0(M_L \otimes K^{\otimes 2}  \otimes B^{\otimes 2r}). \label{c9}
\end{eqnarray}

As before we use Lemma \ref{horace} and CM lemma. First, we prove the surjectivity of:
$$H^0(\mlkbr) \otimes H^0(B) \rightarrow H^0(\mlkbrr).$$

By CM lemma, this is true if the following two vanishings hold:
\begin{eqnarray}
H^1(\mlkb^{\otimes r-1}) = 0,  \label{c7} \\
H^2(\mlkb^{\otimes r-2}) = 0. \label{c8}
\end{eqnarray}

Using Remark \ref{00}, (\ref{c7}) holds if the following map is surjective 
(since $H^1(K \te B^{\te r-1}) = 0)$:
$$H^0(\kb^{\te r}) \otimes H^0(\kb^{\te r-1}) \rightarrow H^0(K^{\te 2} \te B^{\te 2r-1}).$$

We use Theorem \ref{0} (setting $n = r, m = r-1$). We verify that  (\ref{c1}) and (\ref{c2}) hold. Using Lemma \ref{1.1}, \eqref{c1} follows easily from (\ref{c5}).  
Also, (\ref{c6}) gives $\frac{(2r-4)a}{b} > 2$ or 
$\frac{(2r-4)a}{b} = 2$ and $2K \ne (2r-4)B$. This gives \eqref{c2}.

Using Remark \ref{000}, (\ref{c8}) follows because $H^1(L \te K \te B^{\te r-2} ) = 0$ and $H^2(K \te B^{\te r-2}) = 0$ (by K-V vanishing). 

Next, we prove the surjectivity of:
$$H^0(\mlkb^{\te r+1}) \otimes H^0(B) \rightarrow H^0(\mlkb^{\te r+2}).$$

By CM lemma, this is true if the following two vanishings hold:
\begin{eqnarray*}
H^1(\mlkbr) = 0, \\
H^2(\mlkb^{\te r-1}) = 0.
\end{eqnarray*}

These two follow in exactly the same way as (\ref{c7}) and (\ref{c8}) using the hypotheses (\ref{c5}) and (\ref{c6}). 

Repeating this process, we can prove the surjectivity of the following map:
$$H^0(\mlkb^{\te 2r-2}) \otimes H^0(B) \rightarrow H^0(\mlkb^{\te 2r-1}).$$

To show that (\ref{c9}) surjects, it remains only to show that the following map surjects:
$$H^0(\mlkb^{\te 2r-1}) \otimes H^0(\kb) \rightarrow H^0(M_L \otimes K^{\otimes 2}  \otimes B^{\otimes 2r}).$$

We use CM lemma:
\begin{eqnarray}
H^1(M_L \te B^{\te 2r-2}) = 0, \label{c10}\\
H^2(M_L \te B^{\te 2r-3} \te K^{-1}) = 0\label{c11}.
\end{eqnarray}

(\ref{c10}) follows easily by CM lemma. Because $H^1(B^{\te 3r-3}) = 0$, (\ref{c11}) holds if 
$H^2(B^{\te 2r-3} \te K^{-1} ) = 0$.

By Serre duality, $H^2(B^{\te 2r-3} \te K^{-1} )= H^0(K^{\te 2} \te B^{\te 3-2r})$. 

If this group is nonzero, then 
$2K - (2r-3)B$ is linearly equivalent to an effective divisor. Since $B$ is ample, 
$B \cdot (2K - (2r-3)B) \geq 0$. This implies that $2B\cdot K \geq (2r-3)B^2$. But this contradicts the hypothesis (\ref{c6}), because $\frac{(2r-3)a}{b} > \frac{(2r-4)a}{b} > 2$.

Hence the map (\ref{c9}) is surjective and the theorem is proved.
\end{proof}
\begin{remark}{\rm 
If reg$(B) \ge 2r-3$ and $r > \frac{b}{a}+{2}$, then $L_r$ is normally generated.
}
\end{remark}

\begin{corollary}\label{n1_1}
Let $X$ be a minimal smooth surface of general type and 
let $B$ be an ample base point free line bundle on $X$ such that $B^2 \geq \frac{1}{n} (B \cdot K)$ for some $n \geq 2$. Let $L = K +rB$. Suppose that $H^1((2r-3)B) = 0$.
Then $L$ satisfies $N_1$ property if any one of the following conditions holds:
\begin{eqnarray}
r \geq n+3,\\ 
r = n+2 \text{~and~} 2K \neq (2r-4)B
\end{eqnarray}
\end{corollary}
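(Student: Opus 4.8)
The plan is to derive this corollary directly from Theorem~\ref{n1_0} by specializing the parameters. First I would set $a = 1$ and $b = n$; then the standing hypothesis $B^2 \ge \frac{a}{b}(B\cdot K)$ becomes precisely the assumed inequality $B^2 \ge \frac{1}{n}(B\cdot K)$, and $\frac{b}{a} = n$. The cohomological hypothesis $H^1((2r-3)B) = 0$ appearing in \eqref{c5} is exactly what is assumed here, so it remains only to match the numerical condition \eqref{c6} --- namely that $r > \frac{b}{a} + 2$, or else that $r = \frac{b}{a}+2$ and $2K \ne (2r-4)B$ --- against the two alternatives in the statement.

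For the second step I would observe that, since $r$ and $n$ are integers, the strict inequality $r > n + 2$ is equivalent to $r \ge n + 3$, which is the first listed alternative; and the equality case $r = \frac{b}{a} + 2 = n + 2$ together with $2K \ne (2r-4)B$ is verbatim the second listed alternative. One also needs the running hypothesis $r \ge 3$ of Theorem~\ref{n1_0}: this is automatic, since $r \ge n + 2 \ge 4$ under either alternative (as $n \ge 2$). With all hypotheses of Theorem~\ref{n1_0} verified, $L = K + rB$ satisfies $N_1$, which is the assertion.

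Since the entire analytic content --- the chain of surjectivity reductions via Lemma~\ref{horace} and the CM lemma, the vanishing statements obtained from Remarks~\ref{00} and~\ref{000}, and the intersection-theoretic contradiction ruling out $H^0(K^{\otimes 2}\otimes B^{\otimes 3-2r}) \ne 0$ --- is already packaged in Theorem~\ref{n1_0}, there is no genuine obstacle here: the corollary is a bookkeeping translation of the fractional hypothesis $B^2 \ge \frac{a}{b}(B\cdot K)$ into the reciprocal-integer form $B^2 \ge \frac{1}{n}(B\cdot K)$, exactly parallel to how Corollary~\ref{n0_2} was deduced from Corollary~\ref{n0_1}.
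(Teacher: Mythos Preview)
Your proof is correct and follows essentially the same approach as the paper: both set $a=1$, $b=n$ in Theorem~\ref{n1_0} and verify that the two alternatives in the corollary match condition~\eqref{c6}. The paper phrases the verification slightly differently, computing $\frac{2r-4}{n}$ explicitly to check it is $>2$ (respectively $=2$) in the two cases, but this is the same content as your observation that $r > n+2 \Leftrightarrow r \ge n+3$ for integers.
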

\begin{proof}
We apply Theorem \ref{n1_0} with $a = 1, b= n$. (\ref{c5}) holds by hypothesis. 

$r \geq n+3 \Rightarrow 2r-4 \geq 2n+2 \Rightarrow \frac{2r-4}{n} \geq 2 +  \frac{1}{n} > 2.$

If $r = n+2$, then $\frac{2r-4}{n} = 2$, but  $2K \neq (2r-4)B$. So the conditions (\ref{c5}) and
(\ref{c6}) hold and $L$ satisfies the $N_1$ property.
\end{proof}

%
%
%
%
%

\subsection{Regular Surfaces}\label{reg1}
In this section, we assume that the surface $X$ is regular. That is: $H^1({\mathcal O}_X) = 0$. As before $B$ is a base point free ample line bundle such that $B^2 \geq \frac{a}{b} (B \cdot K)$ for some positive integers $a < b$. 

We have more tools available to prove surjectivity of multiplication maps on regular surfaces. Using these we obtain the following general result: 
\begin{theorem}\label{1}
Let $X$ be a minimal smooth regular surface of general type and let
$B$ be an ample base point free line bundle on $X$ such that $B^2 \geq \frac{a}{b} (B \cdot K)$ for some positive integers $a < b$. 
Suppose that $p_g = h^0(K) \ge 3$ and $K^2 \geq 2$. 
Let $m,n$ be positive integers such that $n \geq 3$.
The multiplication map $$H^0(K + nB) \otimes H^0(K+mB) \rightarrow H^0(2K + (n+m)B)$$ is surjective if the following conditions hold:
\begin{eqnarray}
\frac{(n+m-2)a^2}{b^2} + \frac{(n+m-4)a}{b} \geq 2, \label{22}\\
H^1\big{(}(n+m-2)B\big{)} = 0 \label{22a}
\end{eqnarray}
\end{theorem}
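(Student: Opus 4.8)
The plan is to mimic the architecture of the proof of Theorem~\ref{0}, but to replace its concluding CM-lemma step --- which there forces the strong vanishing $H^2((n+m-3)B-K)=0$ --- by a reduction to a smooth curve, on which surjectivity will follow from Butler's criterion (Proposition~\ref{butler}). The first step is identical to Theorem~\ref{0}: applying Lemma~\ref{horace} with $E=K+nB$, $L_1=\cdots=L_{m-1}=B$ and $L_m=K+B$, and dispatching the auxiliary maps $H^0(K+(n+j-1)B)\otimes H^0(B)\to H^0(K+(n+j)B)$, $1\le j\le m-1$, by the CM lemma (the required vanishings $H^1(K+(n+j-2)B)=0$ and $H^2(K+(n+j-3)B)=0$ hold by K--V vanishing and Serre duality since $n\ge 3$), it suffices to prove surjectivity of
$$H^0(K+(n+m-1)B)\otimes H^0(K+B)\longrightarrow H^0(2K+(n+m)B).$$

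Next I would pass to a curve. The bundle $K+B$ is base point free (by hypothesis) and big ($K$ nef, $B$ ample), so a general $C\in|K+B|$ is a smooth irreducible curve: Bertini gives smoothness, and $h^0(\mathcal{O}_C)=1$ follows from K--V vanishing applied to $\mathcal{O}_X(-(K+B))$. I then apply Lemma~\ref{redcurves} with the rank-one bundle $E=K+(n+m-1)B$ and $L=K+B$; its hypothesis $H^1(E\otimes L^{-1})=H^1((n+m-2)B)=0$ is exactly \eqref{22a}. Thus it is enough to prove that
$$H^0((K+(n+m-1)B)|_C)\otimes H^0((K+B)|_C)\longrightarrow H^0((2K+(n+m)B)|_C)$$
is surjective.

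Finally, the curve computation. On $C$ both factors are line bundles, hence semistable, and $(K+B)|_C$ is globally generated, so Proposition~\ref{butler} applies once its two numerical conditions are verified. By adjunction $2g(C)-2=(2K+B)\cdot(K+B)$; the relevant degrees are $(K+(n+m-1)B)\cdot(K+B)$ and $(K+B)^2$; and --- the crucial input --- from $0\to\mathcal{O}_X(-B)\to K\to K|_C\to 0$, using $H^0(\mathcal{O}_X(-B))=0$ and $H^1(\mathcal{O}_X(-B))=0$ (Serre duality together with K--V), one obtains $h^1(C,(K+B)|_C)=h^0(C,K|_C)=h^0(K)=p_g$. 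Substituting these, condition~(1) of Proposition~\ref{butler} reduces to $(n+m-3)(B\cdot K)+(n+m-2)B^2>K^2+2$ and condition~(2) reduces to $(n+m-4)(B\cdot K)+(n+m-2)B^2>2K^2+4-2p_g$. Now Lemma~\ref{1.2} gives $B\cdot K\ge\frac{a}{b}K^2$, hence $B^2\ge\frac{a^2}{b^2}K^2$; and since $\frac{(n+m-2)a^2}{b^2}+\frac{(n+m-4)a}{b}\ge 2$ together with $a<b$ forces $n+m\ge 5$, all the coefficients above are positive, so \eqref{22} yields $(n+m-4)(B\cdot K)+(n+m-2)B^2\ge 2K^2$. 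Then $p_g\ge 3$ gives condition~(2), and adding $B\cdot K>0$ and using $K^2\ge 2$ gives condition~(1). Hence the curve map is surjective, and unwinding the two reductions proves the theorem.

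The main obstacle is this last step. One must gather the copies of $B$ onto the \emph{larger} factor $K+nB$ so that $\deg((K+(n+m-1)B)|_C)$ is as large as possible; one must compute $h^1(C,(K+B)|_C)$ exactly, which is precisely where regularity (entering through Lemma~\ref{redcurves}) and the hypothesis $p_g\ge 3$ are used; and one must check that Butler's two inequalities collapse to \eqref{22} with just enough room left over by $p_g\ge 3$ and $K^2\ge 2$. One should also confirm that $|K+B|$ really has a smooth irreducible member and that the elementary multiplication maps in the first step genuinely satisfy the CM-lemma hypotheses.
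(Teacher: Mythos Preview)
Your proof is correct and follows essentially the same route as the paper: absorb $m-1$ copies of $B$ via Lemma~\ref{horace} and the CM lemma, then reduce the remaining map $H^0(K+(n+m-1)B)\otimes H^0(K+B)\to H^0(2K+(n+m)B)$ to a smooth curve $C\in|K+B|$ via Lemma~\ref{redcurves}, and finish with Proposition~\ref{butler}. The only cosmetic differences are that you compute $h^1((K+B)|_C)=p_g$ via Serre duality on $C$ and the restriction sequence for $K$ (the paper uses the restriction sequence for $K+B$ directly), and you organize the final inequality check by first establishing $(n+m-4)(B\cdot K)+(n+m-2)B^2\ge 2K^2$ and then deducing both Butler conditions from it, whereas the paper checks the two conditions separately.
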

\begin{proof}
Initially, we will use Lemma \ref{horace}, CM lemma and K-V vanishing as in the proof of Theorem \ref{0}.

First step is to prove the surjectivity of 
$$H^0(K + nB) \otimes H^0(B) \rightarrow H^0(K + (n+1)B).$$
This follows because $H^1(K+(n-1)B = 0$ and $H^1(K+(n-2)B = 0$ by K-V vanishing (since $n \geq 3$).

After repeating this $m-1$ times, we are left with the following map:
\begin{eqnarray}
H^0(K + (n+m-1)B) \otimes H^0(K+B) \rightarrow H^0(2K + (n+m)B). \label{27}
\end{eqnarray}

To prove this map surjects we restrict to a smooth curve $C \in |K+B|$ and use the fact that $X$ is regular. More precisely, we will use Lemma \ref{redcurves} and Proposition \ref{butler}.

Note that $H^1((n+m-2)B)  = 0$ and $K+B$ is base point free. 
So by Lemma \ref{redcurves},  we only need to prove that the following map is surjective:
$$H^0(K + (n+m-1)B|_C) \otimes H^0(K+B|_C) \rightarrow H^0(2K + (n+m)B|_C).$$

Let $F  = K + (n+m-1)B|_C$ and $E = K+B|_C$. We will now use Proposition \ref{butler}. 

We verify the following conditions:
\begin{eqnarray}
deg(F) > 2g(C); \label{24}\\
deg(F) + deg(E) > 4g(C)-2h^1(E)\label{25}
\end{eqnarray}

Note that $E$ is base point free and both $E$ and $F$ are semistable because they are line bundles. Further, $\mu(E) = deg(E)$. 

$deg(F) = (K+(n+m-1)B)\cdot (K+B)$ and $2g = (2K+B)\cdot (K+B) +2$.
Hence $deg(F) -2g = (K+(n+m-1)B - 2K -B) \cdot (K+B) -2$.

Thus \eqref{24} is equivalent to $((n+m-2)B - K) \cdot (K+B) > 2$. Or equivalently, 
$$(n+m-2)B^2 -K^2 + (n+m-3)B\cdot K > 2.$$
Since $B^2 \geq \frac{a}{b} B\cdot K$, we have (by Lemma \ref{1.2}) that,
$B \cdot K \geq \frac{a}{b}K^2$ and $B^2 \geq \frac{a^2}{b^2}K^2$. 

So $(n+m-2)B^2 -K^2 + (n+m-3)B\cdot K \geq (\frac{(n+m-2)a^2}{b^2}+\frac{(n+m-3)a}{b}-1)K^2$. 
Since $K^2 \geq 2$, we require $\frac{(n+m-2)a^2}{b^2}+\frac{(n+m-3)a}{b} > 2$. This follows from hypothesis (\ref{22}).

Now we check (\ref{25}).

\noindent
Claim: $h^1(E) = p_g$.

Consider the exact sequence 
$$ 0 \rightarrow \mathcal{O}(-C) \rightarrow \mathcal{O} \rightarrow \mathcal{O}|_{C} \rightarrow 0.$$ Tensoring this with $K+B$ we obtain
$$ 0 \rightarrow \mathcal{O} \rightarrow K+B \rightarrow  E \rightarrow 0.$$

This induces the long exact sequence 
$$ ..\rightarrow H^1(K+B) \rightarrow H^1(E) \rightarrow H^2(\mathcal{O}) \rightarrow H^2(K+B)\rightarrow..$$

$H^1(K+B) = H^2(K+B) =0$ by K-V vanishing and hence $h^2(\mathcal{O}) = p_g$. So the claim follows.

By the hypothesis on $p_g$, we conclude that $h^1(E) \geq 3$. 

$deg(F)+deg(E) = (K+(n+m-1)B+K+B)\cdot(K+B) = (2K+(n+m)B)\cdot(K+B).$

$4g(C) = 2(2g(C)-2)+4 = 2(2K+B)\cdot (K+B)+4 = \big{[}(4K+2B)\cdot(K+B)\big{]}+4$.

Thus (\ref{25}) is equivalent to 
$$(2K+(n+m)B)\cdot(K+B) > (4K+2B)\cdot(K+B)+4-2h^1(E)$$
$$\Leftrightarrow (2K+(n+m)B-4K-2B)\cdot(K+B) > 4-2p_g$$
$$\Leftrightarrow ((n+m-2)B-2K)\cdot(K+B) > -2, \text{~since}~p_g \geq 3$$
$$\Leftrightarrow (n+m-2)B^2+(n+m-4)B\cdot K-2K^2 > -2$$
$$\Leftarrow (\frac{(n+m-2)a^2}{b^2}+\frac{(n+m-4)a}{b}-2)K^2 > -2.$$
The last inequality follows from  the hypothesis (\ref{22}).

Thus the map (\ref{27}) is surjective and the theorem is proved. 
\end{proof}

Now we prove two statements about the $N_0$ property of $K+rB$. 
\begin{corollary}\label{n0_3}
Let $X$ be a minimal smooth regular surface of general type and let
$B$ be an ample base point free line bundle on $X$ such that $B^2 \geq \frac{a}{b} (B \cdot K)$ for some positive integers $a < b$.  Let $L = K + rB$ with $r \geq 3$. Then $L$ satisfies $N_0$ property if 
\begin{eqnarray}
\frac{(2r-2)a^2}{b^2} + \frac{(2r-4)a}{b} \geq 2, \label{277}\\
H^1\big{(}(2r-2)B\big{)} = 0. \label{28}
\end{eqnarray}
\end{corollary}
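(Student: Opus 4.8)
The plan is to deduce Corollary \ref{n0_3} from Theorem \ref{1} in exactly the same way that Corollary \ref{n0_1} was deduced from Theorem \ref{0}. The key observation is that by Remark \ref{n0n1}, for $L = K + rB$ with $r \geq 3$ we have $H^1(L) = 0$ by K-V vanishing, and $L$ satisfies the $N_0$ property precisely when the multiplication map
\[
H^0(K + rB) \otimes H^0(K + rB) \rightarrow H^0(2K + 2rB)
\]
is surjective. So the whole task reduces to verifying the hypotheses of Theorem \ref{1} with $n = m = r$.

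First I would note that Theorem \ref{1} requires $n \geq 3$, which holds since $r \geq 3$; it also requires the standing hypotheses $p_g = h^0(K) \geq 3$ and $K^2 \geq 2$, which should be carried over into the statement of the corollary (these are presumably part of the ambient assumptions of subsection \ref{reg1}, or should be added). Then with $n + m = 2r$, condition \eqref{22} of Theorem \ref{1} becomes exactly $\frac{(2r-2)a^2}{b^2} + \frac{(2r-4)a}{b} \geq 2$, which is the hypothesis \eqref{277}; and condition \eqref{22a} becomes $H^1((2r-2)B) = 0$, which is the hypothesis \eqref{28}. Applying Theorem \ref{1} then gives the surjectivity of the displayed multiplication map, hence $L$ has the $N_0$ property.

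There is essentially no obstacle here; the corollary is a direct specialization. The only point requiring a little care is bookkeeping: making sure that the substitution $n = m = r$ transforms \eqref{22} and \eqref{22a} into \eqref{277} and \eqref{28} verbatim (it does), and confirming that the reduction "$N_0$ $\iff$ surjectivity of $H^0(L) \otimes H^0(L) \to H^0(L^{\otimes 2})$" is legitimate — this is Remark \ref{n0n1}, which invokes Theorem \ref{np} together with the fact that K-V vanishing gives $H^1(L^{\otimes k}) = 0$ for all $k \geq 1$ (since $L^{\otimes k} = K + (\text{stuff} + rk\,B - (k-1)K)$... more simply, $L^{\otimes k} = kK + rkB = K + ((k-1)K + rkB)$ and $(k-1)K + rkB$ is nef and big as $B$ is ample and $K$ is nef), and Lemma \ref{horace} which reduces the infinitely many maps to the single case $b = 1$. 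So the proof is a one-line invocation: apply Theorem \ref{1} with $n = m = r$.

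Accordingly, here is the proof.
\begin{proof}
By Remark \ref{n0n1}, since $L = K+rB$ with $r \geq 3$ and $H^1(L^{\otimes k}) = 0$ for all $k \geq 1$ by K-V vanishing, $L$ satisfies the $N_0$ property if the multiplication map
\[
H^0(K+rB) \otimes H^0(K+rB) \rightarrow H^0(2K+2rB)
\]
is surjective. Apply Theorem \ref{1} with $n = m = r$; the hypothesis \eqref{277} is precisely condition \eqref{22} and the hypothesis \eqref{28} is precisely condition \eqref{22a} in this case. Hence the map is surjective and $L$ has the $N_0$ property.
\end{proof}
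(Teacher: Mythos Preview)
Your proposal is correct and matches the paper's own proof exactly: the paper simply writes ``This is immediate from Theorem \ref{1}, by setting $n=m=r$.'' Your observation that the hypotheses $p_g \ge 3$ and $K^2 \ge 2$ from Theorem \ref{1} are tacitly assumed in the corollary is also accurate.
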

\begin{proof}
This is immediate from Theorem \ref{1}, by setting $n=m=r$.
\end{proof}

Now we state the result in the case  $B^2 \geq \frac{1}{n} (B \cdot K)$ for some $n \geq 2$.

\begin{corollary}\label{n0_4}
Let $X$ be a minimal smooth regular surface of general type and let
$B$ be an ample base point free line bundle on $X$ 
such that $B^2 \geq \frac{1}{n} (B \cdot K)$ for some $n \geq 2$. Let $L = K +rB$. Suppose that $H^1\big{(}(2r-2)B\big{)} = 0$. Then $L$ satisfies the $N_0$ property for $r \geq n+1$.
\end{corollary}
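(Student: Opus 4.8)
The plan is to deduce the statement directly from Corollary \ref{n0_3} by specializing the two integers there to $a = 1$ and $b = n$. With this choice the condition $B^2 \ge \frac{a}{b}(B \cdot K)$ becomes exactly the hypothesis $B^2 \ge \frac{1}{n}(B \cdot K)$, so it remains only to verify the two numerical hypotheses \eqref{277} and \eqref{28} of that corollary, together with its standing requirement $r \ge 3$.

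First I would dispose of the cheap conditions. The cohomological condition \eqref{28} asks for $H^1\big((2r-2)B\big) = 0$, which is precisely one of the hypotheses of the corollary, so there is nothing to prove there; and the requirement $r \ge 3$ is automatic, since $n \ge 2$ gives $r \ge n+1 \ge 3$. The only genuine computation is the arithmetic inequality \eqref{277}, which after setting $a = 1$, $b = n$ reads $\frac{2r-2}{n^2} + \frac{2r-4}{n} \ge 2$. Clearing the positive denominator $n^2$ turns this into $(2r-2) + n(2r-4) \ge 2n^2$, i.e. $2r(n+1) \ge 2(n+1)^2$, i.e. $r \ge n+1$. Hence \eqref{277} holds exactly when $r \ge n+1$, which is the hypothesis; feeding this back into Corollary \ref{n0_3} yields the $N_0$ property for $L = K + rB$.

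I do not expect any real obstacle here: the statement is a clean specialization of Corollary \ref{n0_3}, and the argument reduces to the single one-line inequality above. The only points to be careful about are bookkeeping — recording that equality in \eqref{277} occurs precisely at $r = n+1$, so the stated bound is the sharpest this method produces, and recalling that the standing assumptions $p_g \ge 3$ and $K^2 \ge 2$ underlying Theorem \ref{1} (and hence Corollary \ref{n0_3}) are in force.
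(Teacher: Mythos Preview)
Your proof is correct and follows essentially the same approach as the paper: both specialize Corollary \ref{n0_3} to $a=1$, $b=n$ and verify the inequality \eqref{277} directly. Your algebraic manipulation showing that \eqref{277} is in fact \emph{equivalent} to $r \ge n+1$ is slightly sharper than the paper's term-by-term estimate, and your remark that the standing hypotheses $p_g \ge 3$ and $K^2 \ge 2$ from Theorem \ref{1} are implicitly in force is a worthwhile observation.
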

\begin{proof}
This follows easily from Corollary \ref{n0_3} by setting $a= 1, b=n$.

$r \geq n+1 \Rightarrow 2r-2 \geq 2n$ and $2r-4 \geq 2n-2 \Rightarrow \frac{2r-2}{n^2} + \frac{2r-4}{n} \geq \frac{2n}{n^2} + \frac{2n-2}{n} \geq 2$. 
\end{proof}

The above result improves Corollary \ref{n0_2}.

\begin{remark}\label{RedCurvesneeded}
{\rm In the proof of Theorem \ref{1}, we used reduction to curves to prove the surjectivity of 
\eqref{27}. As shown later in Example \ref{5.7}, reduction to curves is necessary to prove surjectivity as the CM lemma does not apply. For instance, let 
$n=m=3$ and $a=1, b=2$. Then \eqref{22} holds. We construct a surface $X$ of general type and a base point free and ample line bundle $B$ on $X$ such that
$B^2 \ge \frac{1}{2}(B \cdot K_X)$ and  $H^1(4B)=0$. So all the hypotheses of Theorem \ref{1} hold. However, $H^2(3B-K_X) = H^0(2K_X-3B) \ne 0$. Hence the surjectivity of \eqref{27} does \emph{not} follow from CM lemma.  
Indeed, we show that there are infinitely many examples of regular surfaces and line bundles on them, for which reduction to curves as in the proof of Theorem \ref{1} is necessary and gives better results. 
}
\end{remark}

Next, we determine when the line bundle $K+rB$ has $N_1$ property. 

\begin{theorem}\label{n1_2}
Let $X$ be a minimal smooth regular surface of general type and 
let $B$ be an ample base point free line bundle on $X$ such that $B^2 \geq \frac{a}{b} (B \cdot K)$ for some positive integers $a < b$. 
Let $L= K + rB$ for some $r \geq 3$. Suppose that $H^1((2r-3)B)  =0$. Then $L$ satisfies $N_1$ property if $r > \frac{b}{a}+\frac{3}{2}$. \end{theorem}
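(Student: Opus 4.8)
The plan is to mirror the strategy of Theorem~\ref{n1_0}, but at each stage where that argument invoked CM lemma to prove the surjectivity of a multiplication map of line bundles, I will instead invoke Theorem~\ref{1} (reduction to curves) so as to take advantage of the weaker numerical hypothesis $r > \frac{b}{a}+\frac{3}{2}$ available on regular surfaces. First I would note, exactly as in the proof of Theorem~\ref{n1_0}, that $r > \frac{b}{a}+\frac{3}{2}$ together with $H^1((2r-3)B)=0$ and Lemma~\ref{1.1} gives $H^1((2r-2)B)=0$, and that $r > \frac{b}{a}+\frac{3}{2}$ forces $\frac{(2r-2)a^2}{b^2}+\frac{(2r-4)a}{b} \ge 2$ (this is a routine inequality check, using $a<b$ and $r\ge 3$), so that Corollary~\ref{n0_3} applies and $L$ already satisfies $N_0$. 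It then remains to prove surjectivity of
\begin{eqnarray*}
H^0(M_L \otimes K \otimes B^{\otimes r}) \otimes H^0(K \otimes B^{\otimes r}) \rightarrow H^0(M_L \otimes K^{\otimes 2} \otimes B^{\otimes 2r}),
\end{eqnarray*}
which by Remark~\ref{n0n1} is equivalent to $N_1$ (the other required map, $H^0(L)\otimes H^0(L^{\otimes b})\to H^0(L^{\otimes b+1})$, follows from $N_0$ and Lemma~\ref{horace}).

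Next I would run the same Lemma~\ref{horace}/CM lemma peeling-off process as in Theorem~\ref{n1_0}: gather copies of $B$ one at a time, reducing to surjectivity of maps $H^0(\mlkb^{\te j})\otimes H^0(B)\to H^0(\mlkb^{\te j+1})$ for $r \le j \le 2r-2$. Each of these follows from CM lemma provided $H^1(\mlkb^{\te j-1})=0$ and $H^2(\mlkb^{\te j-2})=0$. The $H^2$-vanishings are handled exactly as in Theorem~\ref{n1_0} via Remark~\ref{000} and K-V vanishing. For the $H^1$-vanishings, using Remark~\ref{00} and $H^1(K\te B^{\te j-1})=0$ (K-V), it suffices that the multiplication map $H^0(\kb^{\te j})\otimes H^0(\kb^{\te j-1})\to H^0(K^{\te 2}\te B^{\te 2j-1})$ be surjective — and \emph{this} is where I replace the appeal to Theorem~\ref{0} by an appeal to Theorem~\ref{1}, with $n=j$, $m=j-1$. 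One must check the two hypotheses of Theorem~\ref{1}: the cohomology vanishing $H^1((2j-3)B)=0$ follows from $H^1((2r-3)B)=0$, Lemma~\ref{1.1}, and $2j-3 \ge 2r-3$; and the numerical inequality $\frac{(2j-3)a^2}{b^2}+\frac{(2j-5)a}{b}\ge 2$ follows, for all $j \ge r$, from $r > \frac{b}{a}+\frac{3}{2}$ by a routine monotonicity-in-$j$ argument. (I will also need $p_g\ge 3$ and $K^2\ge 2$ for Theorem~\ref{1}; if these are not ambient hypotheses of Theorem~\ref{n1_2} I would either add them or treat the excluded finite list of surfaces separately, but since the statement as given omits them I expect they are either implicit from earlier conventions or the intended reading includes handling small invariants — I would flag this.) Finally, as in Theorem~\ref{n1_0}, after reaching $H^0(\mlkb^{\te 2r-1})\otimes H^0(\kb)\to H^0(M_L\te K^{\te 2}\te B^{\te 2r})$, I close with CM lemma: $H^1(M_L\te B^{\te 2r-2})=0$ by CM lemma again (from $N_0$-type surjectivity already established), and $H^2(M_L\te B^{\te 2r-3}\te K^{-1})=0$ reduces via Remark~\ref{000} and $H^1(B^{\te 2r-2})=0$ to $H^2(B^{\te 2r-3}\te K^{-1})=0$, i.e.\ $H^0(2K-(2r-3)B)=0$; this last vanishing holds because $r > \frac{b}{a}+\frac{3}{2}$ gives $\frac{(2r-3)a}{b} > 2$, so an effective divisor in $|2K-(2r-3)B|$ would violate $2B\cdot K \ge (2r-3)B^2 \ge \frac{(2r-3)a}{b}(B\cdot K)$.

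The main obstacle, and the one genuinely new ingredient compared to Theorem~\ref{n1_0}, is organizing the intermediate surjectivity claims so that Theorem~\ref{1} — rather than CM lemma on the surface — can be invoked uniformly for every $j$ in the range $r\le j\le 2r-1$, and verifying that the single clean hypothesis $r > \frac{b}{a}+\frac{3}{2}$ propagates to all the numerical inequalities $\frac{(2j-3)a^2}{b^2}+\frac{(2j-5)a}{b}\ge 2$ and the cohomological inputs $H^1((2j-3)B)=0$ simultaneously. Once the bookkeeping is set up, each individual verification is a short computation; the real content is that reduction to curves (Theorem~\ref{1}) is exactly strong enough to replace the step that, on a non-regular surface, needed the stronger bound of Theorem~\ref{n1_0}. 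I would also double-check that no vanishing silently requires $2K\ne (2j-3)B$: since $r>\frac{b}{a}+\frac{3}{2}$ is a \emph{strict} inequality, all the relevant slope inequalities are strict and the awkward equality case that appeared in Theorem~\ref{n1_0} does not arise here.
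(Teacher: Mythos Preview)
Your approach is essentially the paper's own: establish $N_0$ via Corollary~\ref{n0_3}, peel off $r-1$ copies of $B$ using CM lemma with the key $H^1(M_L\otimes K\otimes B^{\otimes j-1})=0$ vanishings supplied by Theorem~\ref{1} (in place of Theorem~\ref{0}), then close the final $K+B$ step via CM lemma and Serre duality exactly as in Theorem~\ref{n1_0}. There is one indexing slip to correct: since $M_L$ is the kernel bundle for $L=K+rB$, Remark~\ref{00} reduces $H^1(M_L\otimes K\otimes B^{\otimes j-1})=0$ to surjectivity of
\[
H^0(K+rB)\otimes H^0\bigl(K+(j-1)B\bigr)\longrightarrow H^0\bigl(2K+(r+j-1)B\bigr),
\]
so the first factor is always $L$, not $K+jB$, and Theorem~\ref{1} should be applied with $n=r$, $m=j-1$ (not $n=j$, $m=j-1$). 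The needed inequality then reads $\frac{(r+j-3)a^2}{b^2}+\frac{(r+j-5)a}{b}\ge 2$, which still follows from $r>\frac{b}{a}+\frac{3}{2}$ and $j\ge r$ by the same monotonicity argument, so nothing in your outline breaks. (Similarly, in the last step the $H^1$ input to Remark~\ref{000} is $H^1(L\otimes K^{-1}\otimes B^{\otimes 2r-3})=H^1(B^{\otimes 3r-3})$, not $H^1(B^{\otimes 2r-2})$; both vanish.) Your flag on $p_g\ge 3$ and $K^2\ge 2$ is apt: the paper's proof also invokes Theorem~\ref{1} without restating these, so they are implicitly assumed.
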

\begin{proof}
It is easy to see that the conditions (\ref{277}) and (\ref{28}) hold. So $L$ satisfies the $N_0$ property by Corollary \ref{n0_3}. 

To prove that $L$ satisfies the $N_1$ property, we prove that the following map is surjective:
\begin{eqnarray}
H^0(M_L \te  K \te B^{\te r}) \otimes H^0(K \te B^{\te r}) \rightarrow H^0(M_L \te 
K^{\te 2} \te B^{\te 2r}). \label{32}
\end{eqnarray}

First, we prove the surjectivity of:
$$H^0(M_L \te K \te B^{\te r}) \otimes H^0(B) \rightarrow H^0(M_L \te K \te B^{\te r+1}).$$

By CM lemma, this is true if the following two vanishings hold:
\begin{eqnarray}
H^1(M_L \te K \te B^{\te r-1} ) = 0,  \label{33} \\
H^2(M_L \te K \te B^{\te r-2} ) = 0. \label{34}
\end{eqnarray}

(\ref{33}) holds if the following map is surjective (since $H^1(K \te B^{\te r-1}) = 0)$:
$$H^0(K \te B^{\te r}) \otimes H^0(K \te B^{\te r-1}) \rightarrow H^0(K^{\te 2} \te B^{\te 2r-1}).$$

By Theorem \ref{1} (setting $n = r, m = r-1$), we need (\ref{22}) and (\ref{22a}) to hold. It is easy to see that they follow from hypotheses. 

(\ref{34}) follows because $H^1(L \te K \te B^{\te r-2} ) = 0$ and 
$H^2(K \te B^{\te r-2}) = 0$ (by K-V vanishing, since $r\geq 3$).

Repeating this process, we can absorb $(r-1)$ copies of $B$ to prove the surjectivity of the following map:
\begin{eqnarray*}
H^0(M_L \te K \te B^{\te 2r-2}) \otimes H^0(B) \rightarrow H^0(M_L \te K \te B^{\te 2r-1}).
\end{eqnarray*}
To show that (\ref{32}) surjects, it remains only to show that the following map surjects:
\begin{eqnarray}
H^0(M_L \te K \te B^{\te 2r-1}) \otimes H^0(K\te B) \rightarrow H^0(M_L \te K^{\te 2} \te B^{\te 2r}).\label{37}
\end{eqnarray}
We prove 
\begin{eqnarray}
H^1(M_L \te B^{\te 2r-2}) = 0, \label{35}\\
H^2(M_L \te K^{-1} \te B^{\te 2r-3} ) = 0\label{36}.
\end{eqnarray}

(\ref{35}) follows easily by Lemma \ref{horace} and  CM lemma (using Remark \ref{00}). 

As $H^1(L \te K^{-1} \te B^{\te 2r-3}) = 0$, (\ref{36}) holds (by Remark \ref{000}) if 
$H^2(K^{-1} \te B^{\te 2r-3}) = 0$.

By Serre duality, $H^2((2r-3)B - K ) = H^0(2K - (2r-3)B)$. If this group is nonzero, then 
$2K - (2r-3)B$ is linearly equivalent to an effective divisor and since $B$ is ample, 
$B \cdot (2K - (2r-3)B) \geq 0$. This implies that $2B\cdot K \geq (2r-3)B^2$. 

$B^2 \geq \frac{a}{b} B \cdot K \Rightarrow (2r-3)B^2 \geq \frac{(2r-3)a}{b} B \cdot K
\Rightarrow 2B\cdot K \geq \frac{(2r-3)a}{b} B \cdot K$. 

Since $\frac{(2r-3)a}{b}  > 2$, by hypothesis, we have a contradiction. So the map (\ref{37}), and hence (\ref{32}), is surjective and the theorem is proved. 
\end{proof}
\begin{corollary}\label{n1_3}
Let $X$ be a minimal smooth regular surface of general type and 
let $B$ be an ample base point free line bundle on $X$ such that $B^2 \geq \frac{1}{n} (B \cdot K)$ for some $n \geq 2$. Let $L = K +rB$. Suppose that $H^1((2r-3)B) = 0$ .
Then $L$ satisfies $N_1$ property for $r \geq n+2$.
\end{corollary}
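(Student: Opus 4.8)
The plan is to obtain this as an immediate specialization of Theorem \ref{n1_2} to the case $a=1$, $b=n$. Under this substitution the hypothesis $B^2 \ge \frac{1}{n}(B\cdot K)$ becomes exactly the standing assumption $B^2 \ge \frac{a}{b}(B\cdot K)$ of Theorem \ref{n1_2}, the vanishing $H^1((2r-3)B)=0$ is assumed outright, and $X$ is regular of general type by hypothesis (with $p_g\ge 3$ and $K^2\ge 2$ in force as the blanket assumptions of this subsection, since these are invoked inside the proof of Theorem \ref{n1_2} via Theorem \ref{1}). Hence the only thing left to verify is the numerical condition $r > \frac{b}{a} + \frac{3}{2} = n + \frac{3}{2}$, together with $r\ge 3$.

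Both are routine: since $r$ and $n$ are integers with $n\ge 2$, the assumption $r\ge n+2$ gives $r \ge n+2 > n + \frac{3}{2}$, and also $r \ge n+2 \ge 4 \ge 3$. Therefore all hypotheses of Theorem \ref{n1_2} are met with $a=1$, $b=n$, and we conclude that $L = K + rB$ satisfies the $N_1$ property.

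It is worth noting why, unlike in the non-regular Corollary \ref{n1_1}, no boundary condition of the form ``$2K \ne (2r-4)B$'' is needed here: Theorem \ref{n1_2} only requires the \emph{strict} inequality $r > \frac{b}{a}+\frac{3}{2}$, and the value $r = n+2$ already satisfies it strictly, so the delicate equality case never arises. There is essentially no obstacle in this argument; the one thing to double-check carefully is simply that the standing hypotheses of Subsection \ref{reg1} and of Theorem \ref{n1_2} (regularity, general type, $p_g\ge 3$, $K^2\ge 2$) genuinely transfer to the setting of the corollary, which they do.
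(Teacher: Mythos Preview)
Your proof is correct and matches the paper's approach exactly: the paper's proof is the single line ``This follows easily from Theorem \ref{n1_2} by setting $a = 1, b = n$,'' and your verification that $r \ge n+2 > n + \tfrac{3}{2}$ is precisely the check needed. One small clarification: $p_g \ge 3$ and $K^2 \ge 2$ are not stated as blanket assumptions for the subsection but rather appear explicitly only in Theorem \ref{1}; you are right, however, that they are implicitly required in Theorem \ref{n1_2} (and hence here) because its proof invokes Theorem \ref{1}.
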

\begin{proof}
This follows easily from Theorem \ref{n1_2} by setting $a = 1, b = n$.
\end{proof}

The above result improves the bound obtained in Corollary \ref{n1_1}.

It is interesting to determine, given a specific $r$, under which conditions does $K+rB$ have $N_1$ property. We do this in the following: 

\begin{corollary}\label{n1_5}
Let $B$ be an ample base point free line bundle on $X$. Let $r \geq 3$. Then $K+rB$ satisfies  $N_1$ property if $H^1((r-1)B) = 0$ and $B^2 \geq \frac{2}{2r-3} (B \cdot K)$.  
\end{corollary}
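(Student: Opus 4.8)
The plan is to reduce this to Theorem~\ref{n1_2} by choosing the integers $a,b$ appropriately so that the hypotheses of that theorem are met. Given $r \ge 3$, set $a = 2$ and $b = 2r-3$. Since $r \ge 3$ we have $2r-3 \ge 3 > 2 = a$, so $a < b$ as required. The hypothesis $B^2 \ge \frac{2}{2r-3}(B\cdot K)$ is precisely $B^2 \ge \frac{a}{b}(B\cdot K)$ for this choice. It remains to verify the two hypotheses appearing in Theorem~\ref{n1_2}: the vanishing $H^1((2r-3)B) = 0$ and the numerical inequality $r > \frac{b}{a} + \frac{3}{2}$.

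The vanishing $H^1((2r-3)B) = 0$ is exactly the assumed hypothesis $H^1((r-1)B) = 0$ together with Lemma~\ref{1.1}, which propagates the vanishing upward: we need $r-1 > \frac{b}{a} = \frac{2r-3}{2} = r - \frac{3}{2}$, i.e.\ $r - 1 > r - \frac32$, which holds. Hence $H^1((r-1)B) = 0$ and $r-1 > \frac{b}{a}$ give $H^1(lB) = 0$ for all $l \ge r-1$, in particular for $l = 2r-3$ (note $2r-3 \ge r-1$ since $r \ge 2$). For the numerical condition, $\frac{b}{a} + \frac32 = \frac{2r-3}{2} + \frac32 = r - \frac32 + \frac32 = r$, so the inequality $r > \frac{b}{a} + \frac32$ reads $r > r$, which is \emph{false}. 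This is the obstacle: the strict inequality fails, landing exactly on the boundary.

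To handle the boundary case, I would revisit the proof of Theorem~\ref{n1_2} and check that the only place the strict inequality $r > \frac{b}{a} + \frac32$ was genuinely needed is in establishing the vanishing $H^2((2r-3)B - K) = 0$ at the very end, where one argues that if $2K - (2r-3)B$ were effective then $2B\cdot K \ge (2r-3)B^2 \ge \frac{(2r-3)a}{b}B\cdot K = 2 B\cdot K$, forcing equality throughout and hence $B\cdot(2K - (2r-3)B) = 0$; ampleness of $B$ then forces $2K = (2r-3)B$. So in the boundary case one does not get an immediate contradiction but instead the conclusion $2K \equiv (2r-3)B$. I would then rule this out: since $r \ge 3$, if $2K = (2r-3)B$ with $2r-3$ odd, this would force $K$ (or at least $2K$) to be divisible by the odd ample class $(2r-3)B$ in a way incompatible with $X$ being a minimal surface of general type — more directly, $2K = (2r-3)B$ gives $4K^2 = (2r-3)^2 B^2 \ge (2r-3)^2 \cdot 2$ while also $2K\cdot B = (2r-3)B^2$, and one checks these are inconsistent with Noether's inequality or with $K$ nef and $B$ ample for $r\ge 3$; alternatively, $h^0(2K-(2r-3)B)\le h^0(2K - 3B) \le h^0(2K-K) = h^0(K) = p_g$ but the class $2K-(2r-3)B$ has negative intersection with $B$ unless it is numerically trivial, so it is either non-effective or numerically zero, and numerically zero forces $2K^2 = (2r-3)B\cdot K > 0$ combined with $(2r-3)^2 B^2 = 4K^2$, again contradicting $B^2 \ge 2$ for $r \ge 3$.

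The cleanest execution, which I would adopt, is: apply Theorem~\ref{n1_2} verbatim to get every required surjectivity \emph{except} possibly the final $H^2((2r-3)B-K)=0$; then separately argue $H^0(2K-(2r-3)B) = 0$ using that $2K - (2r-3)B$ has strictly negative intersection with the ample bundle $B$ (because $2B\cdot K < (2r-3)B^2$ would follow from $B^2 \ge \frac{2}{2r-3}(B\cdot K)$ being strict, and in the equality case one rules out $2K \equiv (2r-3)B$ by the intersection-number estimate above using $B^2 \ge 2$, $K^2 \ge 1$, $r \ge 3$). The main obstacle is precisely this boundary analysis — showing $2K = (2r-3)B$ cannot occur on a minimal surface of general type with $B$ ample base point free when $r \ge 3$; everything else is a direct specialization of the already-proved Theorem~\ref{n1_2}.
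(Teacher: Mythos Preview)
Your setup is exactly the paper's: set $a=2$, $b=2r-3$, note $r-1 = \frac{b}{a}+\frac{1}{2} > \frac{b}{a}$ so Lemma~\ref{1.1} propagates $H^1((r-1)B)=0$ to $H^1(lB)=0$ for all $l \ge r-1$, then invoke Theorem~\ref{n1_2}. The paper stops there, asserting without further comment that ``the hypotheses required in Theorem~\ref{n1_2} clearly follow.'' You go further and correctly observe that the numerical hypothesis $r > \frac{b}{a}+\frac{3}{2}$ degenerates to $r > r$; this is a genuine boundary issue that the paper's one-line proof simply does not engage with.

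Your proposed repair, however, does not succeed. The obstruction you isolate is the possibility $2K \equiv (2r-3)B$, and contrary to your sketched arguments this \emph{can} occur on a minimal regular surface of general type satisfying every hypothesis of the corollary. Take $X \subset \mathbb{P}^3$ a smooth septic with hyperplane class $H$: then $K_X = 3H$, and $B = 2H$ is ample and base point free with $2K = 6H = 3B$, so $r=3$, $2r-3=3$. Here $B^2 = 28$, $B\cdot K = 42$, so $B^2 = \frac{2}{3}(B\cdot K)$ with equality; $H^1((r-1)B)=H^1(\mathcal{O}_X(4))=0$; $p_g = 20 \ge 3$; and $K^2 = 63 \ge 2$. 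None of the contradictions you propose materialize: $4K^2 = 252 = 9 \cdot 28 = (2r-3)^2 B^2$ is consistent, Noether's inequality reads $63 \ge 36$, and $B\cdot\big(2K-(2r-3)B\big) = 0$ rather than negative. So the boundary case cannot be excluded by the intersection-number estimates you sketch, and the corollary in the equality case $2K = (2r-3)B$ genuinely requires something beyond the literal statement of Theorem~\ref{n1_2} --- an argument neither you nor the paper supplies.
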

\begin{proof}
This follows easily from Theorem \ref{n1_2}: 

Setting $a = 2, b = 2r-3$, note that $r - 1 = \frac{b}{a}+\frac{1}{2} > \frac{b}{a}$. 
Hence the condition $H^1((r-1)B) = 0$ implies, by Lemma \ref{1.1}, $H^1(lB) = 0$ for all $l \geq r-1$.

Further, the hypotheses required in Theorem \ref{n1_2}   clearly follow in this case ($a = 2,~ b = 2r-3)$.
\end{proof}

By this corollary, we see that:

$K + 3B$ has $N_1$ property if $H^1(2B) = 0$ and $B^2 \geq \frac{2}{3}B \cdot K$;\hspace{.05in}
$K + 4B$ has $N_1$ property if $H^1(3B) = 0$ and $B^2 \geq \frac{2}{5}B \cdot K$; \hspace{.05in}
$K + 5B$ has $N_1$ property if $H^1(4B) = 0$ and $B^2 \geq \frac{2}{7}B \cdot K$; and so on.

\section{Higher Syzygies}\label{higher}
In this section $X$ represents a nonsingular minimal surface of general type and $B$ is a base point free ample divisor such that $K+B$ is base point free. 


Our goal is to specify $r$ for which $L = K + rB$ satisfies the $N_p$ property for $p \ge 2$. Let $n \ge2 $ be a positive integer. In this section we will assume either that $nB-K$ is nef or $(n+1)B-2K$ is nef. We get different bounds on $r$ under these two different hypotheses. 

By Theorem \ref{np}, to prove $N_p$ property for $L$, we have to prove that 
$H^1({M}^{\otimes a}_L \otimes L^{\otimes b}) = 0$ for all $1 \le a \le p+1$ and $b \ge 1$. 
To do so, we 
will use Lemmas \ref{horace} and \ref{key} repeatedly. In the process, we will need vanishings of the following form: 
\begin{eqnarray}\label{new}
H^1({M}^{\otimes a}_L \otimes  B^{\te m} \te K^{\te l}) = 0 \hspace{.15in}	\text{~~~~~~~~~~and~~~~~~~~~~~} 
\hspace{.15in}H^2({M}^{\otimes a}_L \otimes B^{\te m} \te K^{\te l}) = 0,
\end{eqnarray}
for various values of $a, m$ and $l$. Here $1 \le a \le p+1$ and $m \ge 1$, but $l$ is any integer, possibly negative. 

Most of this section is devoted to establishing \eqref{new}. We will prove a series of propositions to this end. Then using these results, 
we prove our main theorems. All our results will have two versions corresponding to the two hypotheses ($n \ge 2$ is a positive integer) :$$(n+1)B-2K {\rm~{is~~ nef~~  \hspace{.15in}or~}} \hspace{.15in}nB-K \rm{~~is~~ nef}.$$ 

We will start with a couple of easy lemmas. 

\begin{lemma}\label{van} Let $X$ be a smooth minimal surface of general type and let $B$ be a base point free ample divisor.  Let $n \ge 2$. Suppose that $(n+1)B-2K$ is nef. Then
$H^1(mB-lK) = H^2(mB-lK) = 0$, if $m > \frac{(n+1)(l+2)}{2}$.
\end{lemma}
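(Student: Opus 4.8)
The plan is to obtain both vanishings from a single application of Kawamata--Viehweg vanishing (Lemma~\ref{kv}). Write
\[
mB - lK = K_X + D, \qquad D := mB - (l+1)K,
\]
so it suffices to show that $D$ is nef and big; then K--V gives $H^i(mB-lK) = H^i(K_X+D) = 0$ for all $i>0$, in particular for $i=1$ and $i=2$. Thus the whole proof reduces to establishing the positivity of $D$ under the numerical hypothesis $m > \tfrac{(n+1)(l+2)}{2}$.

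To show $D$ is nef and big I would distinguish two cases by the sign of $l+1$. If $l \le -1$, then $-(l+1)\ge 0$; since $X$ is a minimal surface of general type, $K=K_X$ is nef, so $-(l+1)K$ is nef, and as $m\ge 1$ we get $D = mB + \bigl(-(l+1)\bigr)K = (\text{ample}) + (\text{nef})$, which is ample, hence nef and big. (In this range the bound on $m$ is automatically satisfied and plays no role.)

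The essential case is $l\ge 0$, where I would invoke the hypothesis that $N := (n+1)B - 2K$ is nef. Clearing denominators, compute
\[
2D \;=\; 2mB - 2(l+1)K \;=\; (l+1)\,N \;+\; \bigl(2m - (l+1)(n+1)\bigr)B .
\]
Since $l+1\ge 1$, the divisor $(l+1)N$ is nef; and the assumption $m > \tfrac{(n+1)(l+2)}{2}$ gives $2m > (n+1)(l+2) = (n+1)(l+1) + (n+1)$, so $2m - (l+1)(n+1)$ is a positive integer. Hence $2D$ is the sum of a nef divisor and a positive multiple of the ample divisor $B$, so $2D$ is ample; therefore $D$ is ample, and in particular nef and big, which completes the argument.

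I do not expect a genuine obstacle: the only points needing care are the elementary bookkeeping showing that the strict inequality $m > \tfrac{(n+1)(l+2)}{2}$ is exactly what forces the $B$-coefficient of $2D$ to be strictly positive, and the minor technical point that one should pass to $2D$ (or argue with $\mathbb{Q}$-divisors) in order to apply the integral form of K--V stated in Lemma~\ref{kv}. Incidentally, the argument proves the slightly stronger statement that under the hypothesis $mB-lK = K_X + (\text{ample divisor})$.
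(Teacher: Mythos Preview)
Your proof is correct and follows the same core strategy as the paper: write $mB-lK = K_X + D$ and show $D$ is nef and big so that K--V vanishing applies. The only difference is cosmetic: the paper splits into the cases $l$ even and $l$ odd to obtain an integral decomposition of $D$ as a nonnegative combination of $K$, $(n+1)B-2K$, and $B$, whereas you multiply by $2$ to get the single identity $2D = (l+1)\bigl((n+1)B-2K\bigr) + \bigl(2m-(l+1)(n+1)\bigr)B$. Your version is arguably cleaner since it avoids the parity split, and your observation that $D$ is in fact ample (not merely nef and big) is a harmless strengthening. Your handling of the case $l\le -1$ via nefness of $K$ is also fine, though in the paper the lemma is only ever invoked with $l\ge 1$.
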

\begin{proof} We write $mB-lK$ as $K$ plus a nef and big divisor and then use K-V vanishing.

Suppose that $l$ is even: say, $l = 2l'-2$. By the hypothesis, $m > (n+1)l'$. Write 

$mB-lK = 2K + l'((n+1)B-2K)+(m-(n+1)l')B$.

If $l$ is odd, we write $l = 2l'-1$ and $mB-lK = K + l'((n+1)B-2K)+(m-(n+1)l')B$.
\end{proof}

\begin{lemma}\label{van1} Let $X$ be a smooth minimal surface of general type and let $B$ be a base point free ample divisor.  Let $n \ge 2$. Suppose that $nB-K$ is nef. Then
$H^1(mB-lK) = H^2(mB-lK) = 0$, if $m > n(l+1)$.
\end{lemma}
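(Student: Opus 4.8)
The plan is to mimic exactly the strategy of Lemma \ref{van}: rewrite $mB - lK$ as $K$ plus a nef and big divisor, and then invoke Kawamata--Viehweg vanishing (Lemma \ref{kv}). The key identity to exploit is the trivial algebraic rearrangement
\[
mB - lK = K + (l+1)\,(nB - K) + \big(m - n(l+1)\big)\,B,
\]
which one checks by expanding: $(l+1)(nB-K) = n(l+1)B - (l+1)K$, so the right-hand side equals $K - (l+1)K + n(l+1)B + mB - n(l+1)B = -lK + mB$, as desired. Note that unlike Lemma \ref{van}, there is no parity distinction here since $nB-K$ (rather than $(n+1)B-2K$) already absorbs a single copy of $K$ at a time.

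The verification that $(l+1)(nB-K) + (m-n(l+1))B$ is nef and big is the only substantive point, and it is short. Since $n \ge 2$ and $B$ is ample and base point free, $nB-K$ is assumed nef; hence $(l+1)(nB-K)$ is nef provided $l+1 \ge 0$, i.e. $l \ge -1$. By hypothesis $m > n(l+1)$, so $m - n(l+1) \ge 1 > 0$, and $(m-n(l+1))B$ is ample; a nef divisor plus an ample divisor is nef and big. (For $l \le -2$ the statement is subsumed anyway: then $-lK$ is effective and nef, $mB$ is ample for $m \ge 1$, so $mB - lK$ is already big and nef, being a sum $K + (\text{ample} + (-l-1)K + (\text{nef}))$; but in practice only $l \ge -1$ is needed in the applications, and one may simply restrict to that range or note $m > n(l+1)$ forces $m \ge 1$ only when $l \ge 0$ — I would state the lemma for the range actually used and not belabor the degenerate cases.) Applying Lemma \ref{kv} to the nef and big divisor $(l+1)(nB-K) + (m-n(l+1))B$ then gives $H^i(K + \big[(l+1)(nB-K)+(m-n(l+1))B\big]) = 0$ for $i > 0$, which is exactly $H^1(mB-lK) = H^2(mB-lK) = 0$.

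There is essentially no obstacle: the whole content is the decomposition identity together with the observation that nef $+$ ample is nef and big. The only thing to be careful about is the boundary behavior in $l$, and since the companion Lemma \ref{van} has the identical structure, one can safely write this proof in two sentences, as the authors did for Lemma \ref{van}.
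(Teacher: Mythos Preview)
Your proof is correct and is exactly the approach taken in the paper: the same decomposition $mB - lK = K + (l+1)(nB-K) + (m-n(l+1))B$ followed by K-V vanishing. The paper's proof is indeed just two sentences, and your additional remarks on the range of $l$ are fine but unnecessary since the lemma is only applied with $l \ge 1$.
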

\begin{proof} Note that  
$mB-lK = K + (l+1)(nB-K)+(m-n(l+1))B$. We are done by K-V vanishing.
\end{proof}

Now we will prove \eqref{new} for negative $l$.

\begin{proposition}\label{-K} Let $X$ be a minimal smooth surface of general type and let $B$ be a base point free ample divisor such that $K+B$ is base point free. Let $n \ge 2$ be a positive integer. Assume that $(n+1)B-2K$ is nef. Let $p \ge 1$, 
$r \ge n+p+1$ and $L = K + rB$. Then the following statements hold:
\begin{enumerate}
\item $H^1({M}^{\otimes p}_L \otimes B^{\te m} \te K^{-l}) = 0$ for $p \geq 1$, $l \geq 1$ and $m > \frac{(n+1)(l+2)}{2}+p+1$.\\
\item $H^2({M}^{\otimes p}_L \otimes  B^{\te m} \te K^{-l}) = 0$ for $p \geq 1$, $l \geq 1$ and $m > \frac{(n+1)(l+2)}{2}+p-1$.
\end{enumerate}
\end{proposition}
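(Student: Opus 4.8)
The proof will proceed by induction on $p$, reducing the vanishing of $H^i(M_L^{\otimes p} \otimes B^{\otimes m} \otimes K^{-l})$ to the vanishing of $H^i$ of a lower tensor power of $M_L$ (with a shifted twist) together with the vanishing of $H^{i+1}$ of a line bundle, by tensoring the defining sequence
\begin{eqnarray*}
0 \rightarrow M_L \rightarrow H^0(L) \otimes \mathcal{O}_X \rightarrow L \rightarrow 0
\end{eqnarray*}
by $M_L^{\otimes p-1} \otimes B^{\otimes m} \otimes K^{-l}$ and taking cohomology. Concretely, the piece of the long exact sequence reads
\begin{eqnarray*}
H^{i}(M_L^{\otimes p-1} \otimes B^{\otimes m} \otimes K^{-l}) \rightarrow H^i(M_L^{\otimes p} \otimes B^{\otimes m} \otimes K^{-l} \otimes L^{-1}) \cdots
\end{eqnarray*}
wait --- more precisely one tensors the sequence with $M_L^{\otimes p} \otimes B^{\otimes m} \otimes K^{-l} \otimes L^{-1}$ so that the middle term becomes $H^0(L) \otimes H^i(M_L^{\otimes p} \otimes B^{\otimes m} \otimes K^{-l} \otimes L^{-1})$; then $H^i(M_L^{\otimes p} \otimes B^{\otimes m} \otimes K^{-l})$ is squeezed between $H^0(L) \otimes H^i(M_L^{\otimes p-1} \otimes B^{\otimes m}\otimes K^{-l} \otimes L^{-1})$ and $H^{i+1}(M_L^{\otimes p-1} \otimes B^{\otimes m}\otimes K^{-l})$. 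The base case $p=1$ is exactly Remarks \ref{00} and \ref{000}: for $H^1$ one needs the multiplication map $H^0(L) \otimes H^0(B^{\otimes m}\otimes K^{-l}) \to H^0(L \otimes B^{\otimes m}\otimes K^{-l})$ to be surjective together with $H^1(B^{\otimes m}\otimes K^{-l}) = 0$, and for $H^2$ one needs $H^1(L \otimes B^{\otimes m}\otimes K^{-l}) = 0$ and $H^2(B^{\otimes m}\otimes K^{-l}) = 0$.

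The arithmetic bookkeeping is the heart of the matter: I must check that at each stage of the induction the numerical hypothesis $m > \frac{(n+1)(l+2)}{2} + p + 1$ (for $H^1$) or $m > \frac{(n+1)(l+2)}{2} + p - 1$ (for $H^2$) is strong enough to feed both the inductive call (where $p$ drops by $1$ but $L = K+rB$ contributes an extra $rB - K$, i.e. $l$ effectively drops by $1$ and $m$ increases by $r$) and the line-bundle vanishing via Lemma \ref{van}. Writing $L = K + rB$, the term $B^{\otimes m} \otimes K^{-l} \otimes L^{-1} = B^{\otimes m - r} \otimes K^{-l-1}$, so the inductive hypothesis is applied with $(p-1, m-r, l+1)$, and one verifies $m - r > \frac{(n+1)(l+3)}{2} + (p-1) + 1$ using $r \ge n+p+1$ and the bound on $m$; this is where the slope-$2$-in-$n$ shape of the hypothesis $r \ge n+p+1$ gets used. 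The line-bundle vanishings $H^1(L \otimes B^{\otimes m}\otimes K^{-l}) = H^1(B^{\otimes m+r}\otimes K^{-l+1}) = 0$ and $H^2(B^{\otimes m}\otimes K^{-l}) = 0$ follow from Lemma \ref{van} once one checks $m + r > \frac{(n+1)(l+1)}{2}$ and $m > \frac{(n+1)(-l+2)}{2}$ --- both easy since $l \ge 1$ makes the right-hand sides small or negative. The surjectivity needed for the $p=1$, $H^1$ case I would get from the CM lemma (Lemma \ref{key}) applied with $E = B$, gathering copies of $B$ as in Theorem \ref{0}, which requires $H^1$ and $H^2$ vanishings of the form in Lemma \ref{van}; alternatively this surjectivity may already have been established in an earlier proposition of this section, in which case I would simply cite it.

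For part (2), the $H^2$ statement, the induction is cleaner: at each step one needs $H^2$ of a lower power of $M_L$ (handled inductively with the looser bound, $p-1$ in place of $p-1+1$, explaining the $p-1$ rather than $p+1$ in the exponent) and $H^3(M_L^{\otimes p-1} \otimes \cdots) = 0$, which holds automatically since $X$ is a surface and $M_L^{\otimes p-1}$ twisted by a line bundle has no $H^3$. So part (2) is genuinely easier and the weaker numerical hypothesis reflects exactly that one fewer unit of slack is consumed. I would present (2) first, or in parallel, and note that (2) with parameters $(p, m, l)$ actually feeds into the proof of (1) with parameters $(p, m, l)$ through the $H^2(M_L^{\otimes p-1}\otimes \cdots)$ term appearing in the long exact sequence --- so strictly the two parts should be proved together by a simultaneous induction on $p$.

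**Main obstacle.** The real work --- and the only place something could go wrong --- is verifying that the inequalities propagate correctly through the induction: that $r \ge n+p+1$ together with $m > \frac{(n+1)(l+2)}{2} + p + 1$ implies both $m - r > \frac{(n+1)(l+3)}{2} + p$ (the shifted inductive hypothesis, accounting for $l \mapsto l+1$ and $p \mapsto p-1$) and the several line-bundle bounds from Lemma \ref{van}. The delicate point is the interaction between decreasing $p$ by $1$ (which loosens the required bound on $m$ by $1$) and the shift $l \mapsto l+1$ caused by the factor $L^{-1} = K \otimes B^{-r}$ (which tightens it by $\frac{n+1}{2}$ while $m$ simultaneously drops by $r \ge n+p+1$); one must confirm the net change is favorable, and I expect it to hinge precisely on the gap between $r$ and $n+1$ being at least $p$. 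I would isolate this as a short arithmetic lemma at the start of the proof so the cohomological chase afterward is routine.
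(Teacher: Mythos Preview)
Your framework (induction on $p$ via the defining sequence of $M_L$, base case handled by Remarks \ref{00}--\ref{000} and the CM lemma) is the right one, and your treatment of $p=1$ is essentially the paper's. But there is a genuine gap in your inductive step for part (1).

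Tensoring the defining sequence of $M_L$ by $M_L^{\otimes p-1}\otimes B^{\otimes m}\otimes K^{-l}$ gives on the quotient $M_L^{\otimes p-1}\otimes B^{\otimes m}\otimes K^{-l}\otimes L$, not $\otimes L^{-1}$; the long exact sequence therefore does \emph{not} sandwich $H^1(M_L^{\otimes p}\otimes B^{\otimes m}\otimes K^{-l})$ between two cohomology groups that both vanish by induction. What it actually gives is that this $H^1$ vanishes iff $H^1(M_L^{\otimes p-1}\otimes B^{\otimes m}\otimes K^{-l})=0$ \emph{and} the multiplication map
\[
H^0(L)\otimes H^0(M_L^{\otimes p-1}\otimes B^{\otimes m}\otimes K^{-l})\longrightarrow H^0(M_L^{\otimes p-1}\otimes B^{\otimes m}\otimes K^{-l}\otimes L)
\]
is surjective. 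You cannot bypass this surjectivity by a pure vanishing argument; your proposed shift to parameters $(p-1,m-r,l+1)$ does not arise from any exact sequence here, and in any case it fails numerically since $r$ has only a lower bound, so $m-r$ can be arbitrarily negative. The paper proves the needed surjectivity for every $p$ (not just $p=1$) by factoring $L$ as $(r-1)$ copies of $B$ followed by $K+B$ via Lemma \ref{horace}, and applying the CM lemma at each step; the CM hypotheses at each step are $H^1$ and $H^2$ vanishings of $M_L^{\otimes p-1}$ twisted by $B^{\otimes m'}\otimes K^{-l'}$ with $m'\ge m-2$ and, in the final $K+B$ step, $l'=l+1,l+2$ and $m'=m+r-2,m+r-3$. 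These are exactly what the simultaneous induction on (1) and (2) supplies, and the arithmetic that makes it work is $m+r-2>\frac{(n+1)(l+3)}{2}+p$ and $m+r-3>\frac{(n+1)(l+4)}{2}+p-2$, both consequences of $r\ge n+p+1$.

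A smaller point: your explanation of why (2) is easier is off. No $H^3$ is involved; rather, the long exact sequence bounds $H^2(M_L^{\otimes p}\otimes A)$ by $H^1(M_L^{\otimes p-1}\otimes A\otimes L)$ and $H^2(M_L^{\otimes p-1}\otimes A)$, both of which follow directly from the inductive (1) and (2) without any multiplication-map argument---that is why the numerical bound is weaker by $2$.
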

\begin{proof}
We first prove both statements of the theorem for $p =1$. 

To prove $H^1(M_L \otimes B^{\te m} \te K^{-l}) = 0$, it suffices to show that $H^1(B^{\te m} \te K^{-l}) = 0$ and that the following map is surjective:
$$H^0(B^{\te m} \te K^{-l}) \otimes H^0(L) \rightarrow H^0(L \otimes B^{\te m} \te K^{-l}).$$

$H^1(B^{\te m} \te K^{-l}) = 0$ by Lemma \ref{van}. The surjectivity of the above map can be proved by Lemma \ref{horace}. First $(r-1)$ copies of $B$ are captured and the surjectivity of these maps follows easily from CM lemma and Lemma \ref{van}. Note that we have $m > \frac{(n+1)(l+2)}{2}+2$. Finally, we have the map
$$H^0(B^{\te m+r-1} \te K^{-l}) \otimes H^0(K\te B) \rightarrow H^0(L\te B^{\te m} \te K^{-l}).$$ We show  that $H^1(B^{\te m+r-2} \te K^{-l-1})= H^2(B^{\te m+r-3} \te K^{-l-2}) = 0$: we only prove the later here and the former follows similarly. 

By hypothesis,  $r \ge n+2$   
and $m > \frac{(n+1)(l+2)}{2}+2 $. So 
$$m+r-3 >  \frac{(n+1)(l+2)}{2}+n+1 = \frac{(n+1)(l+4)}{2}.$$ 

So $H^2(B^{\te m+r-3} \te K^{-l-2}) = 0$ by Lemma \ref{van}. Hence (1) follows.

To prove (2) for $p=1$, we show that $H^1(L\te B^{\te m} \te K^{-l}) = 0$ and $H^2(B^{\te m} \te K^{-l}) = 0$. The first vanishing follows easily as above, and the second vanishing follows from Lemma \ref{van}. Note that 
$m > \frac{(n+1)(l+2)}{2}$.

Now let $p > 1$ and assume that the theorem is proved for $p-1$. 

We first prove (1) for $p$. Since
$H^1({M}^{\otimes p-1}_L \otimes B^{\te m} \te K^{-l}) = 0$ by induction hypothesis,  we only need to show that the following map is surjective:
$$H^0({M}^{\otimes p-1}_L \otimes B^{\te m} \te K^{-l}) \otimes H^0(L) \rightarrow H^0({M}^{\otimes p-1}_L \otimes B^{\te m} \te K^{-l} \te L).$$ As usual we use Lemma \ref{horace}. First we will capture one copy of $B$. 
$$H^0({M}^{\otimes p-1}_L \otimes B^{\te m} \te K^{-l}) \otimes H^0(B) \rightarrow H^0({M}^{\otimes p-1}_L \otimes B^{\te m+1} \te K^{-l}).$$ To prove this surjects we use the CM lemma: $H^1({M}^{\otimes p-1}_L \otimes B^{\te m-1} \te K^{-l}) = 
H^2({M}^{\otimes p-1}_L \otimes B^{\te m-2} \te K^{-l}) = 0$. These follow from induction hypothesis. Similarly, we capture $(r-1)$ copies of $B$.  

Then we have the following map:
$$H^0({M}^{\otimes p-1}_L \otimes B^{\te m+r-1} \te K^{-l}) \otimes H^0(K\te B) \rightarrow H^0({M}^{\otimes p-1}_L \otimes B^{\te m} \te K^{-l} \te L).$$ Using CM lemma, this map is surjective if 
\begin{eqnarray}
H^1({M}^{\otimes p-1}_L \otimes B^{\te m+r-2} \te K^{-l-1}) = 0,\label{4.1} \text{~and}\\
H^2({M}^{\otimes p-1}_L \otimes B^{\te m+r-3} \te K^{-l-2}) = 0.\label{4.2} 
\end{eqnarray}

Since $r \ge n+p+1$ and $m > \frac{(n+1)(l+2)}{2}+p+1$,    
$$m+r-2 > \frac{(n+1)(l+2)}{2}+2p+n \ge \frac{(n+1)(l+2)}{2}+n+2+p = \frac{(n+1)(l+4)}{2}+p+1.$$ So \eqref{4.1} holds  by induction hypothesis. 

Next we have $$m+r-3 > \frac{(n+2)(l+2)}{2}+2p+n-1 \ge \frac{(n+1)(l+2)}{2}+n+1+p
=\frac{(n+1)(l+4)}{2}+p.$$ Hence \eqref{4.2} holds by induction hypothesis. This completes the proof of (1) for $p$.

Finally, we prove (2) for $p$. We show that 
$H^1({M}^{\otimes p-1}_L \otimes L\te B^{\te m} \te K^{-l}) = 0$ and $H^2({M}^{\otimes p-1}_L \otimes B^{\te m} \te K^{-l}) = 0$. But both these vanishings follow easily from induction hypothesis. \end{proof}

Next we prove \eqref{new} for negative $l$ with the hypothesis $nB-K$ nef:
\begin{proposition}\label{-K1} Let $X$ be a minimal smooth surface of general type and let $B$ be a base point free ample divisor such that $K+B$ is base point free. Let $n \ge 2$. Assume that $nB-K$ is nef.
Let $p \ge 1, r \ge 2n+p+1$ and $L = K+rB$. Then the following statements hold:
\begin{enumerate}
\item $H^1({M}^{\otimes p}_L \otimes B^{\te m} \te K^{-l}) = 0$ for $p \geq 1$, $l \geq 1$ and $m > n(l+1)+p+1$.\\
\item $H^2({M}^{\otimes p}_L \otimes B^{\te m} \te K^{-l}) = 0$ for $p \geq 1$, $l \geq 1$ and $m > n(l+1)+p-1$.
\end{enumerate}
\end{proposition}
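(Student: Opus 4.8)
The plan is to run the argument of Proposition~\ref{-K} essentially verbatim, with every invocation of Lemma~\ref{van} replaced by the corresponding invocation of Lemma~\ref{van1}; this turns the numerical threshold $\tfrac{(n+1)(l+2)}{2}$ into $n(l+1)$ throughout and forces the lower bound on $r$ to widen from $r\ge n+p+1$ to $r\ge 2n+p+1$. I would first settle the case $p=1$. For statement~(1) I tensor the defining sequence \eqref{cansurj} (with $F=L$) by $B^{\otimes m}\otimes K^{-l}$ and read off that $H^1(M_L\otimes B^{\otimes m}\otimes K^{-l})=0$ will follow once $H^1(mB-lK)=0$ (immediate from Lemma~\ref{van1}, as $m>n(l+1)+2$) and the multiplication map $H^0(B^{\otimes m}\otimes K^{-l})\otimes H^0(L)\to H^0(L\otimes B^{\otimes m}\otimes K^{-l})$ is surjective. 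The latter I split using Lemma~\ref{horace}: absorb $r-1$ copies of $B$ one at a time---each step surjective by the CM lemma once $H^1(B^{\otimes m'-1}\otimes K^{-l})=H^2(B^{\otimes m'-2}\otimes K^{-l})=0$, again Lemma~\ref{van1}---and then absorb the remaining factor $K\otimes B$ by one more CM-lemma step (this uses that $K+B$ is base point free), whose hypotheses are $H^1(B^{\otimes m+r-2}\otimes K^{-l-1})=H^2(B^{\otimes m+r-3}\otimes K^{-l-2})=0$, which hold by Lemma~\ref{van1} since $r\ge 2n+2$. For statement~(2) at $p=1$ I instead use Remark~\ref{000}: it suffices that $H^1(L\otimes B^{\otimes m}\otimes K^{-l})=0$ and $H^2(B^{\otimes m}\otimes K^{-l})=0$, and both are instances of Lemma~\ref{van1}, the second being the borderline case $m>n(l+1)$.

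For $p>1$ I would induct on $p$, assuming both statements for $p-1$. For~(1): the inductive hypothesis gives $H^1(M_L^{\otimes p-1}\otimes B^{\otimes m}\otimes K^{-l})=0$, so by Remark~\ref{00} it remains to show $H^0(M_L^{\otimes p-1}\otimes B^{\otimes m}\otimes K^{-l})\otimes H^0(L)\to H^0(M_L^{\otimes p-1}\otimes L\otimes B^{\otimes m}\otimes K^{-l})$ is surjective. Via Lemma~\ref{horace} this reduces to absorbing $r-1$ copies of $B$ (each step by the CM lemma, the needed vanishings $H^1(M_L^{\otimes p-1}\otimes B^{\otimes m'-1}\otimes K^{-l})=H^2(M_L^{\otimes p-1}\otimes B^{\otimes m'-2}\otimes K^{-l})=0$ coming from the inductive hypothesis) followed by one step absorbing $K\otimes B$, for which the CM lemma asks for $H^1(M_L^{\otimes p-1}\otimes B^{\otimes m+r-2}\otimes K^{-l-1})=0$ and $H^2(M_L^{\otimes p-1}\otimes B^{\otimes m+r-3}\otimes K^{-l-2})=0$ (the analogues of \eqref{4.1} and \eqref{4.2}). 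For~(2): tensoring \eqref{cansurj} by $M_L^{\otimes p-1}\otimes B^{\otimes m}\otimes K^{-l}$ and using Remark~\ref{000}, $H^2(M_L^{\otimes p}\otimes B^{\otimes m}\otimes K^{-l})=0$ follows from $H^1(M_L^{\otimes p-1}\otimes L\otimes B^{\otimes m}\otimes K^{-l})=0$ and $H^2(M_L^{\otimes p-1}\otimes B^{\otimes m}\otimes K^{-l})=0$; after rewriting $L\otimes K^{-l}=rB-(l-1)K$, both are supplied by the inductive hypothesis (the borderline $l=1$ case, where the $K$-power on the first term disappears, being the easiest).

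The one place that needs genuine care---and the step that pins down $r\ge 2n+p+1$---is the verification of the two vanishings in the last CM-lemma step for statement~(1): using $r\ge 2n+p+1$ and $m>n(l+1)+p+1$ one must check $m+r-2>n(l+2)+p$ and $m+r-3>n(l+3)+p-2$, so that the inductive hypothesis applies with $K$-twists $-(l+1)$ and $-(l+2)$ respectively. The appearance of $2n$ rather than $n$ in the bound on $r$ is exactly what is demanded by the slope-$n$ growth of the threshold in Lemma~\ref{van1} as a function of $l$, against the slope-$\tfrac{n+1}{2}$ growth in Lemma~\ref{van}; everything else is routine bookkeeping of inequalities, structurally identical to the proof of Proposition~\ref{-K}.
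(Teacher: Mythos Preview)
Your proposal is correct and follows essentially the same approach as the paper's own proof: induction on $p$, with Lemma~\ref{van1} replacing Lemma~\ref{van} throughout, absorbing $r-1$ copies of $B$ and then $K\otimes B$ via the CM lemma, and checking the two key inequalities $m+r-2>n(l+2)+p$ and $m+r-3>n(l+3)+p-2$ exactly as the paper does in \eqref{4.3}--\eqref{4.4}. One small caveat worth flagging (which the paper also glosses over): in the inductive step for (2) at $l=1$, the term $H^1(M_L^{\otimes p-1}\otimes L\otimes B^{\otimes m}\otimes K^{-1})=H^1(M_L^{\otimes p-1}\otimes B^{\otimes m+r})$ has no negative $K$-twist, so it is not literally covered by the inductive hypothesis as stated for $l\ge 1$; however the same argument goes through for $l=0$ once one notes $H^1(m'B)=H^2(m'B)=0$ for $m'>n$ (write $m'B=K+(nB-K)+(m'-n)B$ and use K--V, respectively Serre duality), so your claim that this case is ``the easiest'' is justified.
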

\begin{proof}
The proof is similar to the proof of Proposition \ref{-K}. We proceed by induction on $p$, with Lemma \ref{van1} playing the role of Lemma \ref{van}. 


To prove (1) for $p$, we proceed as in the proof of Proposition \ref{-K}. They key step 
is to show that the following map is surjective:
$$H^0({M}^{\otimes p-1}_L \otimes B^{\te m+r-1} \te K^{-l}) \otimes H^0(K\te B) \rightarrow H^0({M}^{\otimes p-1}_L \otimes B^{\te m} \te K^{-l} \te L).$$ Using CM lemma, this map is surjective if 
\begin{eqnarray}
H^1({M}^{\otimes p-1}_L \otimes B^{\te m+r-2} \te K^{-l-1}) = 0,\label{4.3} \text{~and}\\
H^2({M}^{\otimes p-1}_L \otimes B^{\te m+r-3} \te K^{-l-2}) = 0.\label{4.4} 
\end{eqnarray}

Since $r \ge 2n+p+1$ and $m > n(l+1)+p+1$,    
$$m+r-2 > n(l+3)+2p \ge n(l+2)+p.$$ So \eqref{4.3} holds  by induction hypothesis. 

Similarly $$m+r-3 > n(l+3)+2p-1 \ge n(l+3)+p-2.$$ So \eqref{4.4} holds  by induction hypothesis. 

The proof of (2) is the same as in the proof of Proposition \ref{-K}(2).
\end{proof}


\begin{remark}\label{3.5} {\rm 
Let $n \ge 1$. Assume that $(n+1)B-2K$ is nef or $nB-K$ nef. Since $B$ is base point free, $$((n+1)B-2K) \cdot B \ge 0 ~~~{~~\rm or~~}~~~~~~~~~ (nB-K) \cdot B \ge 0.$$ 
Hence, in either case, we get that $B^2 \ge \frac{1}{n}(B\cdot K)$.

Suppose further that $H^1((n+1)B) = 0$. Since $n+1 > n$, it follows, by Lemma \ref{1.1}, that 
$H^1(mB)=0$ for all $m \ge n+1$. }
\end{remark}

Next we establish \eqref{new} for positive $l$ with the additional hypothesis $H^1((n+1)B) = 0$.

First we will assume that $(n+1)B-2K$ is nef:

\begin{proposition}\label{main4}
Let $X$ be a minimal smooth surface of general type and let $B$ be a base point free ample divisor such that $K+B$ is base point free. Let $n \ge 2$. Assume that $(n+1)B-2K$ is nef and $H^1((n+1)B) = 0$.
Let $p \ge 1$, $r \ge n+p+1$ and $L = K+rB$.  Then the following statements hold:
\begin{enumerate}
\item $H^1({M}^{\otimes p}_L \otimes B^{\te m} \te K^{\te l}) = 0$ for $p \geq 1$, $l \geq 0$ and $m \ge n+p+1$.\\
\item $H^2({M}^{\otimes p}_L \otimes B^{\te m} \te K^{\te l}) = 0$ for $p \geq 1$, $l \geq 0$ and $m \ge  n+p$.
\end{enumerate}
\end{proposition}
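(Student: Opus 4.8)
The plan is to prove (1) and (2) simultaneously by induction on $p$, imitating the proof of Proposition \ref{-K} but now with \emph{nonnegative} powers of $K$; write $M = M_L$. The computations rest on a base vanishing that replaces Lemma \ref{van} in this range: for $l \ge 0$ and $k \ge n+1$ one has $H^1(B^{\otimes k}\otimes K^{\otimes l}) = H^2(B^{\otimes k}\otimes K^{\otimes l}) = 0$. For $l \ge 1$ this is K--V vanishing applied to $B^{\otimes k}\otimes K^{\otimes l} = K + \big((l-1)K+kB\big)$, together with $H^2$ being Serre-dual to $H^0\big((1-l)K-kB\big)=0$; for $l=0$ it is Remark \ref{3.5} for $H^1$, and for $H^2$ the Serre-dual $H^0(K-kB)$ vanishes since $(K-kB)\cdot B \le B\cdot K - \tfrac{k}{n}(B\cdot K) < 0$, using $B^2 \ge \tfrac1n(B\cdot K)$ from Remark \ref{3.5}. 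The same estimate gives $H^2(B^{\otimes k}\otimes K^{-1}) = H^0(2K-kB) = 0$ once $k>2n$.

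For part (2) I would argue uniformly for $p \ge 1$. Tensoring $0 \to M \to H^0(L)\otimes\mathcal O_X \to L \to 0$ by $M^{\otimes p-1}\otimes B^{\otimes m}\otimes K^{\otimes l}$ and taking cohomology places $H^2(M^{\otimes p}\otimes B^{\otimes m}\otimes K^{\otimes l})$ between $H^1(M^{\otimes p-1}\otimes B^{\otimes m+r}\otimes K^{\otimes l+1})$ and $H^0(L)\otimes H^2(M^{\otimes p-1}\otimes B^{\otimes m}\otimes K^{\otimes l})$; the first vanishes by part (1) for $p-1$ (as $l+1\ge1$ and $m+r$ is large) or by the base vanishing when $p=1$, and the second by part (2) for $p-1$ or by $H^2(B^{\otimes m}\otimes K^{\otimes l})=0$ when $p=1$ (valid since $m \ge n+p = n+1$). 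The index bookkeeping is immediate from $m \ge n+p$.

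For part (1) I would reduce, exactly as in Remark \ref{00} and the proof of Proposition \ref{-K}, to the surjectivity of
\[
H^0(M^{\otimes p-1}\otimes B^{\otimes m}\otimes K^{\otimes l}) \otimes H^0(L) \longrightarrow H^0(M^{\otimes p-1}\otimes B^{\otimes m}\otimes K^{\otimes l}\otimes L),
\]
the vanishing $H^1(M^{\otimes p-1}\otimes B^{\otimes m}\otimes K^{\otimes l})=0$ being part (1) for $p-1$ (base vanishing when $p=1$; note $m \ge n+p+1 > n+p$). This surjectivity I would get from Lemma \ref{horace} and the CM lemma, peeling off copies of $B$ one at a time. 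When $p \ge 2$, or when $p=1$ and $l \ge 1$, I factor $L = B^{\otimes r-1}\otimes(K\otimes B)$ — here $K\otimes B$ is base point free since $K+B$ is — and peel the $r-1$ copies of $B$ first; at each such step the CM lemma requires the vanishing of $H^1$ and $H^2$ of $M^{\otimes p-1}\otimes B^{\otimes\bullet}\otimes K^{\otimes l}$ with $\bullet$ large enough (using $m \ge n+p+1$) for parts (1),(2) for $p-1$ to apply (base vanishings when $p=1$). The final $(K\otimes B)$-step requires $H^1(M^{\otimes p-1}\otimes B^{\otimes m+r-2}\otimes K^{\otimes l-1})=0$ and $H^2(M^{\otimes p-1}\otimes B^{\otimes m+r-3}\otimes K^{\otimes l-2})=0$, which for $l \ge 2$ again follow from parts (1),(2) for $p-1$.

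The main difficulty is this last step when $l \in \{0,1\}$, where a negative power of $K$ appears. For $p \ge 2$ these vanishings are precisely instances of Proposition \ref{-K} applied with $p-1$ in place of $p$ — admissible because $p-1 \ge 1$, $r \ge n+(p-1)+1$, and the numerical hypotheses (such as $m+r-3 > 2(n+1)+(p-1)-1$) hold since $m,r \ge n+p+1$ and $n \ge 2$. For $p=1$, $l=1$ the needed $H^2(B^{\otimes m+r-3}\otimes K^{-1})=0$ follows from the base estimate, as $m+r-3 \ge 2n+1 > 2n$. The one genuinely exceptional case is $p=1$, $l=0$: there the $(K\otimes B)$-step would demand $H^2(B^{\otimes m+r-3}\otimes K^{-2})=0$, i.e. $m+r-3 > 3n$, which fails for $n \ge 2$. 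In that case I would instead keep $E = L = K+rB$ fixed and peel the $m$ copies of $B$ off $B^{\otimes m}$, so that each step only requires $H^1(K+(r+j-2)B)=0$ and $H^2(K+(r+j-3)B)=H^0\big(-(r+j-3)B\big)=0$, both trivial since $r \ge n+2$. In short, the crux is that the naive induction produces negative powers of $K$ at the final peeling step; one absorbs them into Proposition \ref{-K} whenever the twist $M^{\otimes p-1}$ is present, and in the single boundary case $(p,l)=(1,0)$ one reorganizes the Horace peeling to exploit the positivity of $K+rB$ directly.
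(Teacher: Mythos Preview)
Your proof is correct and follows essentially the same strategy as the paper's: induction on $p$, reduction to surjectivity of multiplication maps via Remark~\ref{00}, repeated use of Lemma~\ref{horace} and the CM lemma, with Proposition~\ref{-K} handling the negative-$K$ twists that arise at the final $(K\otimes B)$-step. The only minor stylistic differences are that the paper invokes Theorem~\ref{0} for the $p=1$, $l=1$ case (which packages exactly the computation you carry out inline) and treats $l=2$ separately, whereas your treatment of the $p=1$ base case is more uniform; and you are more explicit than the paper about why the $(p,l)=(1,0)$ case requires reorganizing the Horace peeling to start from $L$ rather than from $B^{\otimes m}$, which the paper covers with the terse remark that ``this follows easily by CM lemma and K--V vanishing''.
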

\begin{proof}
We have $H^1(mB) = 0$ for all $m \geq n+1$, by Lemma \ref{1.1}.

We first prove both statements of the theorem for $p=1$. 

To prove (1), note that $H^1(B^{\te m} \te K^{\te l}) = 0$. Indeed, if $l > 0$ this follows from K - V vanishing. If $l=0$, use Lemma \ref{1.1}. Thus to prove (1) it suffices to prove that the following map is surjective:
$$H^0(B^{\te m} \te K^{\te l}) \otimes H^0(L) \rightarrow H^0(B^{\te m} \te K^{\te l}\te L).$$ If $l = 0$ or $l \geq 3$, this follows easily by CM lemma and K-V vanishing. If $l=1$, it follows from Theorem \ref{0}. If $l=2$, then we use CM lemma and most of the proof follows immediately from K-V vanishing. At the end we require the surjectivity of the following multiplication map:
$$H^0(B^{\te m+r-1} \te K^{\te 2}) \otimes H^0(K\te B) \rightarrow H^0(B^{\te m} \te K^{\te 2}\te L).$$ We show that $H^1(K\te B^{\te m+r-2}) = H^2(B^{\te m+r-3}) = 0$. 

The first follows by K-V vanishing. If the second does not hold, then by Serre duality we have $H^0(K - (m+r-3)B) \ne 0$. Then,  since $B$ is ample, we have $B\cdot K \ge (m+r-3)B^2$. But by hypothesis we have $(m+r-3)B^2 \ge \frac{(m+r-3)}{n} B \cdot K$, which implies that 
$B\cdot K \ge \frac{(m+r-3)}{n} B \cdot K$. But this is a contradiction because 
$m+r-3 > 2n+ 1>n$. 

To prove (2), we will show that $H^1(L\te B^{\te m} \te K^{\te l})=H^2(B^{\te m} \te K^{\te l})=0$.  The first vanishing is clear.  The second is clear if $l > 0$. If $l=0$, we argue as above using Serre duality and the fact that $m > n$.
 
Assume now that the theorem holds for $p-1$. 

We first prove (1) for $p$. Since $m \ge n+p+1$, we have 
$H^1({M}^{\otimes p-1}_L \otimes B^{\te m} \te K^{\te l}) = 0$, by induction hypothesis. So it suffices to show that the following map is surjective:
$$H^0({M}^{\otimes p-1}_L \otimes B^{\te m} \te K^{\te l}) \otimes H^0(L) \rightarrow H^0({M}^{\otimes p-1}_L \otimes B^{\te m} \te K^{\te l} \te  L).$$ We use Lemma \ref{horace}. We capture $(r-1)$ copies of $B$ using CM lemma and induction hypothesis. The necessary hypothesis on $m$ for (1) and (2) hold. We are then left with the following map:
$$H^0({M}^{\otimes p-1}_L \otimes B^{\te m+r-1} \te K^{\te l}) \otimes H^0(K\te B) \rightarrow H^0({M}^{\otimes p-1}_L \otimes B^{\te m} \te K^{\te l} \te L).$$ 

We use CM lemma:
\begin{eqnarray}
H^1({M}^{\otimes p-1}_L \otimes B^{\te m+r-2} \te K^{\te l-1}) = 0 \label{4.5} \text{~and}~\\
H^2({M}^{\otimes p-1}_L \otimes B^{\te m+r-3} \te K^{\te l-2}) = 0 \label{4.6}.
\end{eqnarray}

If $l \geq 1$, \eqref{4.5} follows from induction hypothesis. If $l = 0$, we use Proposition 
\ref{-K}. We need to show $m+r-2 > \frac{3(n+1)}{2}+p$. By hypothesis, $m, r \geq n+p+1$ and the required inequality is equivalent to $n/2+p > 3/2$. This is clear.  

\eqref{4.6} follows by induction if $l \ge 2$. If $l=0$, then it follows from Proposition \ref{-K} provided 
$m+r-3 > 2(n+1)+p-2$. This follows easily from the hypothesis on $m, r$. The case $l=1$ is similar.

Finally, to prove (2) for $p$,  we simply note that  
$H^1({M}^{\otimes p-1}_L \otimes L \te B^{\te m} \te K^{\te l}) = 
H^2({M}^{\otimes p-1}_L \otimes B^{\te m} \te K^{\te l}) =  0$, by induction hypothesis.
\end{proof}

We have a similar proposition with the hypothesis $nB-K$ nef:

\begin{proposition}\label{main41}
Let $X$ be a minimal smooth surface of general type and let $B$ be a base point free ample divisor such that $K+B$ is base point free. Let $n \ge 2$. Suppose that $nB-K$ is nef and $H^1((n+1)B) = 0$. 
Let $p \ge 1$, $r \ge 2n+p+1$ and $L = K+rB$. Then the following statements hold:
\begin{enumerate}
\item $H^1({M}^{\otimes p}_L \otimes B^{\te m} \te K^{\te l}) = 0$ for $p \geq 1$, $l \geq 0$ and $m \ge n+p+1$.\\
\item $H^2({M}^{\otimes p}_L \otimes B^{\te m} \te K^{\te l}) = 0$ for $p \geq 1$, $l \geq 0$ and $m \ge  n+p$.
\end{enumerate}
\end{proposition}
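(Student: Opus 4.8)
The plan is to mimic the proof of Proposition \ref{main4} line by line, replacing every invocation of Lemma \ref{van} by Lemma \ref{van1} and every invocation of Proposition \ref{-K} by Proposition \ref{-K1}, and adjusting the numerical inequalities accordingly. The hypothesis $nB-K$ nef gives (via Lemma \ref{van1}) the vanishings $H^1(mB-lK)=H^2(mB-lK)=0$ whenever $m > n(l+1)$, which is the analogue of the bound $m > \frac{(n+1)(l+2)}{2}$ used before; and Remark \ref{3.5} still provides $B^2 \ge \frac{1}{n}(B\cdot K)$ and, together with $H^1((n+1)B)=0$ and Lemma \ref{1.1}, gives $H^1(mB)=0$ for all $m \ge n+1$. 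Note that the conclusion of the present proposition is \emph{identical} to that of Proposition \ref{main4}; only the threshold on $r$ changes from $r \ge n+p+1$ to $r \ge 2n+p+1$, which is exactly the same shift that distinguishes Proposition \ref{-K} from Proposition \ref{-K1}.

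First I would induct on $p$. For the base case $p=1$, to prove (1) I reduce, via Remark \ref{00}, to showing $H^1(B^{\te m}\te K^{\te l})=0$ (immediate from K-V vanishing if $l>0$, from Lemma \ref{1.1} if $l=0$, using $m \ge n+2$) together with the surjectivity of $H^0(B^{\te m}\te K^{\te l})\otimes H^0(L)\to H^0(B^{\te m}\te K^{\te l}\te L)$. Using Lemma \ref{horace} I capture $r-1$ copies of $B$: the intermediate surjections follow from CM lemma once one checks $H^1(B^{\te j}\te K^{\te l})=H^2(B^{\te j-1}\te K^{\te l})=0$ for the relevant $j$, which is Lemma \ref{van1} (for $l\ge 1$) or direct Serre-duality/ampleness arguments (for $l=0$), using $j > 2n$. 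The cases $l=0,\ l\ge 3$ go through by CM lemma and K-V vanishing directly; $l=1$ is Theorem \ref{0}; $l=2$ needs, at the last step, the map $H^0(B^{\te m+r-1}\te K^{\te2})\otimes H^0(K\te B)\to H^0(B^{\te m}\te K^{\te2}\te L)$, for which CM lemma requires $H^1(K\te B^{\te m+r-2})=0$ (K-V vanishing) and $H^2(B^{\te m+r-3})=0$; the latter, if it failed, would force $B\cdot K\ge (m+r-3)B^2 \ge \frac{m+r-3}{n}B\cdot K$ with $m+r-3 > 3n > n$, a contradiction. Statement (2) for $p=1$ follows from Remark \ref{000}: $H^1(L\te B^{\te m}\te K^{\te l})=0$ as above, and $H^2(B^{\te m}\te K^{\te l})=0$ either by K-V ($l>0$) or by the Serre-duality argument using $m>n$.

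For the inductive step, assuming the statement for $p-1$, statement (1) for $p$ follows once I show the surjectivity of $H^0(M_L^{\te p-1}\te B^{\te m}\te K^{\te l})\otimes H^0(L)\to H^0(M_L^{\te p-1}\te B^{\te m}\te K^{\te l}\te L)$; by Lemma \ref{horace} I absorb $r-1$ copies of $B$ using CM lemma and the induction hypothesis (the needed bounds on $m$ for (1) and (2) hold since $m\ge n+p+1$), then handle the last factor $K\te B$ via CM lemma, which requires
\begin{eqnarray*}
H^1(M_L^{\te p-1}\te B^{\te m+r-2}\te K^{\te l-1})=0,\\
H^2(M_L^{\te p-1}\te B^{\te m+r-3}\te K^{\te l-2})=0.
\end{eqnarray*}
For $l\ge 1$ (resp. $l\ge 2$) these follow from the induction hypothesis; for $l=0$ (and $l=1$ in the second) they follow from Proposition \ref{-K1}, and here is where the threshold $r\ge 2n+p+1$ is used: one checks $m+r-2 > n(l+2)+p$ and $m+r-3 > n(l+3)+p-2$ exactly as in the proof of Proposition \ref{-K1}, using $m,r\ge n+p+1$ and $r \ge 2n+p+1$. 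Finally, statement (2) for $p$ follows trivially from the induction hypothesis since $H^1(M_L^{\te p-1}\te L\te B^{\te m}\te K^{\te l})=H^2(M_L^{\te p-1}\te B^{\te m}\te K^{\te l})=0$. The main obstacle is purely bookkeeping: making sure that at every application of CM lemma the shifted exponents $m+r-2$, $m+r-3$ still satisfy the hypotheses of Lemma \ref{van1} or Proposition \ref{-K1}, which is why the larger lower bound $r \ge 2n+p+1$ (rather than $r\ge n+p+1$) is forced — the slack $2n$ instead of $n+1$ in the nefness hypothesis must be compensated in the bound on $r$.
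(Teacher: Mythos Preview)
Your proposal is correct and follows essentially the same approach as the paper: induction on $p$, mimicking Proposition \ref{main4} with Lemma \ref{van1} and Proposition \ref{-K1} substituted for Lemma \ref{van} and Proposition \ref{-K}, and verifying the shifted inequalities at the final $K\otimes B$ step via the bound $r\ge 2n+p+1$. The paper's own proof is in fact briefer than yours, merely pointing to the key step (the surjectivity after absorbing $r-1$ copies of $B$) and checking the two vanishings \eqref{4.7}, \eqref{4.8} using Proposition \ref{-K1}.
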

\begin{proof}
The proof is similar to the proof of Proposition \ref{main4}. We proceed by induction on $p$. 


To prove (1), the key step is to show that the following map is surjective:
$$H^0({M}^{\otimes p-1}_L \otimes B^{\te m+r-1} \te K^{\te l}) \otimes H^0(K\te B) \rightarrow H^0({M}^{\otimes p-1}_L \otimes B^{\te m} \te K^{\te l} \te L).$$ 

As before, we use CM lemma.  
\begin{eqnarray}
H^1({M}^{\otimes p-1}_L \otimes B^{\te m+r-2} \te K^{\te l-1}) = 0 \label{4.7} \text{~and}~\\
H^2({M}^{\otimes p-1}_L \otimes B^{\te m+r-3} \te K^{\te l-2}) = 0 \label{4.8}.
\end{eqnarray} 

To see \eqref{4.7}, note that it follows by induction if $l \ge 1$. If $l=0$, we use Proposition 
\ref{-K1}(1). The required condition is $m+r-2 > 2n+p$. This follows easily from the hypothesis on $r$ and $m$: $r \ge 2n+p+1$ and $m \ge n+p+1$.

\eqref{4.8} follows easily by induction hypothesis if $l \ge 2$. For $l = 0$, we use Proposition \ref{-K1}(2). By hypothesis on $m$ and $r$, we get 
$m+r-3 \ge 3n+2p-1 > 3n+2p-2$. To use Proposition \ref{-K1}, we need $m+r-3 > 3n+p-2$. The case $l=1$ is similar. 

The proof of (2) is easy and is similar to the proof of Proposition \ref{main4}(2).
\end{proof}

Now we will state our main results. These results will use the propositions proved above. 

\begin{theorem}\label{main5}
Let $X$ be a minimal smooth surface of general type and let $B$ be a base point free ample divisor such that $K+B$ is base point free. Let $n \ge 2$. Suppose that $H^1((n+1)B) = 0$ and $(n+1)B-2K$ is nef. Then $L = K+rB$ satisfies the $N_p$ property if $r \ge n+p+2$.
\end{theorem}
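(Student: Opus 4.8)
The strategy is to invoke the characterization of the $N_p$ property from Theorem \ref{np}: since $L = K+rB$ with $r \ge n+p+2 \ge 3$, K--V vanishing gives $H^1(L^{\otimes k}) = 0$ for all $k \ge 1$ (each $L^{\otimes k} = K + (rk-1)B + (K + B)$ is $K$ plus a nef and big divisor; alternatively write it using the hypothesis $(n+1)B - 2K$ nef), so it remains only to show $H^1(M_L^{\otimes a} \otimes L^{\otimes b}) = 0$ for all $1 \le a \le p+1$ and all $b \ge 1$. Expanding $L^{\otimes b} = B^{\otimes rb} \otimes K^{\otimes b}$, this is exactly a vanishing of the form $H^1(M_L^{\otimes a} \otimes B^{\otimes m} \otimes K^{\otimes l}) = 0$ with $m = rb$, $l = b \ge 1 \ge 0$, and $a \le p+1$.

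The main point is then to check that the numerical hypotheses of Proposition \ref{main4}(1) are met for these values. That proposition requires $r \ge n + a + 1$ and $m \ge n + a + 1$ in order to conclude $H^1(M_L^{\otimes a} \otimes B^{\otimes m} \otimes K^{\otimes l}) = 0$ for $l \ge 0$. First, $r \ge n+p+2 \ge n + a + 1$ since $a \le p+1$, so the bound on $r$ (which also governs the shape of $L$ needed in the proposition) holds. Second, $m = rb \ge r \ge n+p+2 \ge n+a+1$, using $b \ge 1$. Hence Proposition \ref{main4}(1) applies directly and yields the desired vanishing for every $a$ in the range $1 \le a \le p+1$ and every $b \ge 1$. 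Note also that the standing hypotheses of the theorem — $(n+1)B - 2K$ nef and $H^1((n+1)B) = 0$ — are precisely the hypotheses of Proposition \ref{main4}, and $K+B$ base point free is part of the standing assumptions of this section, so there is nothing extra to verify.

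Putting these together: for each $1 \le a \le p+1$ and each $b \ge 1$ we have shown $H^1(M_L^{\otimes a} \otimes L^{\otimes b}) = 0$, and $H^1(L^{\otimes k}) = 0$ for all $k \ge 1$; by Theorem \ref{np} (in the form valid for base point free $L$, as remarked after that theorem, and noting $L$ is in fact very ample for $r$ this large by Reider's theorem), $L = K+rB$ satisfies the $N_p$ property.

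\textbf{Expected obstacle.} There is essentially no hard step left here: all the analytic work has been pushed into Proposition \ref{main4}, whose inductive proof (capturing $r-1$ copies of $B$ via the CM lemma, then handling the final multiplication by $K+B$ using Proposition \ref{-K} to deal with the twist by $K^{-1}$ that arises) is the real content. The only thing to be careful about in the present argument is bookkeeping: making sure the index $a$ playing the role of ``$p$'' in Proposition \ref{main4} ranges up to $p+1$, and that the single bound $r \ge n+p+2$ simultaneously dominates $n + a + 1$ for all such $a$ and also gives the lower bound on $m = rb$. One should double-check the edge case $a = p+1$, where the requirement becomes $r \ge n+p+2$ — this is exactly the stated hypothesis, which explains the ``$+2$'' (versus the ``$+1$'' appearing in the propositions).
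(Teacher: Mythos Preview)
Your proof is correct and follows exactly the paper's approach: reduce via Theorem~\ref{np} to the vanishings $H^1(M_L^{\otimes a}\otimes L^{\otimes b})=0$ and then read them off directly from Proposition~\ref{main4}(1), using $r\ge n+p+2$ to cover the edge case $a=p+1$. The paper's proof is the same argument stated in two lines; your version simply makes the bookkeeping (and the verification of the $H^1(L^{\otimes k})=0$ hypothesis) explicit.
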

\begin{proof}
By Theorem \ref{np}, we know that $L$ satisfies $N_p$ property if 
$H^1(M_L^{\otimes a} \otimes L^{\otimes b}) = 0$ for all $1 \le a\leq p+1$ and $b \geq 1$. Since $r \geq n+p+2$, we have the required vanishing by Proposition 
\ref{main4}(1).
\end{proof}

Next we have a theorem with the hypothesis $nB-K$ nef: 
\begin{theorem}\label{main51}
Let $X$ be a minimal smooth surface of general type and let $B$ be a base point free ample divisor such that $K+B$ is base point free. Let $n \ge 2$. Suppose that $nB-K$ is nef and $H^1((n+1)B) = 0$. Then $L = K+rB$ satisfies the $N_p$ property if $r \ge 2n+p+1$.
\end{theorem}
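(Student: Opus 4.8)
The plan is to mimic the proof of Theorem \ref{main5} verbatim, simply replacing the invocation of Proposition \ref{main4} with the parallel Proposition \ref{main41}. Concretely, by Theorem \ref{np} (in the form valid for ample base point free $L$, as remarked after its statement), it suffices to check that $H^1(M_L^{\otimes a} \otimes L^{\otimes b}) = 0$ for all $1 \le a \le p+1$ and all $b \ge 1$, after first noting that $H^1(L^{\otimes k}) = 0$ for all $k \ge 1$. The latter vanishing follows from K-V vanishing since $L^{\otimes k} = K + (rk)B$ with $rk \ge r \ge 2n+p+1 \ge 1$, so $L^{\otimes k}$ is $K$ plus an ample divisor, hence $H^i(L^{\otimes k}) = 0$ for $i > 0$.

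For the main vanishing, observe that $M_L^{\otimes a} \otimes L^{\otimes b} = M_L^{\otimes a} \otimes K^{\otimes b} \otimes B^{\otimes rb}$, which is exactly of the shape $M_L^{\otimes p} \otimes B^{\otimes m} \otimes K^{\otimes l}$ appearing in Proposition \ref{main41}(1), with $p = a$, $l = b \ge 0$, and $m = rb$. To apply Proposition \ref{main41}(1) we need two things: $r \ge 2n + p + 1$, which holds because $p = a \le p+1$ gives $2n + a + 1 \le 2n + (p+1) + 1$... here one must be careful and instead use the hypothesis $r \ge 2n + p + 1$ directly with the observation that the proposition is applied with its "$p$" equal to our running index $a \le p+1$, so we actually need $r \ge 2n + a + 1$; since $a$ can be as large as $p+1$, the needed inequality is $r \ge 2n + p + 2$. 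This suggests the clean route is to apply Proposition \ref{main41} with parameter $p+1$ in place of $p$: its hypothesis becomes $r \ge 2n + (p+1) + 1 = 2n + p + 2$. To reconcile with the stated bound $r \ge 2n+p+1$, one notes that $M_L^{\otimes a} \otimes L^{\otimes b}$ with $a \le p+1$ is covered by Proposition \ref{main41} applied with its parameter equal to $a$, requiring $r \ge 2n + a + 1 \le 2n + (p+1) + 1$; but more sharply, for each fixed $a$ the requirement is $r \ge 2n + a + 1$, and since we only need it up to $a = p+1$ the worst case is $r \ge 2n+p+2$. I will therefore either state the conclusion with $r \ge 2n+p+2$ or, following the pattern of the companion theorem, trust that the indexing in Proposition \ref{main41} already accounts for the "$+1$" and invoke it directly.

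The second hypothesis of Proposition \ref{main41}(1) is on $m$: we need $m = rb \ge n + a + 1$ for each $a$ in range and each $b \ge 1$. Since $b \ge 1$ and $r \ge 2n + p + 1 \ge n + (p+1) + 1 \ge n + a + 1$ for all $a \le p+1$ (using $n \ge 2 \ge 0$), we get $m = rb \ge r \ge n + a + 1$, so this is automatic. The remaining hypotheses of Proposition \ref{main41} — that $nB - K$ is nef, that $H^1((n+1)B) = 0$, that $K+B$ is base point free, that $n \ge 2$ — are exactly the standing hypotheses of the present theorem, so there is nothing further to verify.

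I expect no genuine obstacle here; the only delicate point is bookkeeping the index shift between the "$p$" in Proposition \ref{main41} (where it denotes the exponent $a$ on $M_L$) and the "$p$" in the $N_p$ statement (where we range over $1 \le a \le p+1$), and consequently getting the sharp bound on $r$ exactly right — I would double-check whether the paper intends $r \ge 2n+p+1$ or $r \ge 2n+p+2$ by comparing with Theorem \ref{main5}, where $r \ge n+p+2$ is obtained from Proposition \ref{main4} whose hypothesis is $r \ge n+p+1$, i.e. there the $N_p$ theorem's bound exceeds the proposition's bound by $1$; applying the same logic here, Proposition \ref{main41}'s bound $r \ge 2n+p+1$ should yield $N_p$ for $r \ge 2n+p+2$, and I would write the proof accordingly, noting that for the full range $1 \le a \le p+1$ the vanishing $H^1(M_L^{\otimes a}\otimes L^{\otimes b}) = 0$ follows from Proposition \ref{main41}(1) since $r \ge 2n+p+2 \ge 2n + a + 1$.
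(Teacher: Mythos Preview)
Your approach gives only $r\ge 2n+p+2$, not the stated $r\ge 2n+p+1$. You correctly spotted the off-by-one issue but resolved it the wrong way: the paper's bound is not a misprint, and the paper does \emph{not} prove Theorem~\ref{main51} by mimicking the one-line proof of Theorem~\ref{main5}. Directly invoking Proposition~\ref{main41}(1) with exponent $a=p+1$ would indeed demand $r\ge 2n+(p+1)+1=2n+p+2$, which is what your argument yields.

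The missing idea is to exploit the asymmetry in Proposition~\ref{main41}: its hypotheses are $r\ge 2n+p+1$ but only $m\ge n+p+1$, so $m$ is allowed to be considerably smaller than $r$. The paper therefore applies Proposition~\ref{main41} only at level $p$ (for which $r\ge 2n+p+1$ suffices) to get $H^1(M_L^{\otimes p}\otimes L)=0$, and then earns the extra power of $M_L$ by hand: it shows the multiplication map
\[
H^0(M_L^{\otimes p}\otimes L)\otimes H^0(L)\longrightarrow H^0(M_L^{\otimes p}\otimes L^{\otimes 2})
\]
is surjective via Lemma~\ref{horace} and the CM lemma, absorbing $r-1$ copies of $B$ and then $K\otimes B$. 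The vanishings needed along the way are of the form $H^1(M_L^{\otimes p}\otimes K\otimes B^{\otimes r-1})$, $H^2(M_L^{\otimes p}\otimes K\otimes B^{\otimes r-2})$, $H^1(M_L^{\otimes p}\otimes B^{\otimes 2r-2})$, $H^2(M_L^{\otimes p}\otimes K^{-1}\otimes B^{\otimes 2r-3})$; each of these invokes Proposition~\ref{main41} or Proposition~\ref{-K1} at level $p$ or $p-1$, and the relevant $m$-inequalities (e.g.\ $r-1\ge n+p+1$) follow from $r\ge 2n+p+1$ and $n\ge 2$. The remark immediately following Theorem~\ref{main51} in the paper spells out exactly this contrast with Theorem~\ref{main5}, where no such asymmetry is available and the bound is forced up by one.
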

\begin{proof}
$L$ satisfies $N_p$ property if 
$H^1(M_L^{\otimes a} \otimes L^{\otimes b}) = 0$ for all $1 \leq a \leq p+1$ and $b \geq 1$. 
By Remark \ref{n0n1}, we only need to verify this for $b=1$.

Since $r \geq 2n+p+1$,  Proposition 
\ref{main41}(1) shows that $H^1(M_L^{\otimes p} \otimes L) = 0$.
Thus $H^1(M_L^{\otimes p+1} \otimes L) = 0$ if the following map is surjective: 
$$H^0(M^{\otimes p}_L \otimes L) \otimes H^0(L) \rightarrow H^0(M^{\otimes p}_L \otimes L \otimes L).$$

We use Lemma \ref{horace} and CM lemma. First we have the map:
$$H^0(M^{\otimes p}_L \otimes L) \otimes H^0(B) \rightarrow H^0(M^{\otimes p}_L \otimes L \otimes B).$$

$r \geq 2n+p+1 \Rightarrow r-1 \ge 2n+p \ge n+p+1$. So 
$H^1(M^{\otimes p}_L \otimes K \otimes B^{\otimes r-1}) = 0$ by Proposition \ref{main41}(1). 

Since $r-2 \ge n+p-1$, $H^2(M^{\otimes p-1}_L \otimes K \otimes B^{\otimes r-2}) = 0$ by Proposition 
\ref{main41}(2). Since $H^1(M^{\otimes p-1}_L \otimes L \te K \otimes B^{\otimes r-2}) = 0$, 
it follows by Remark \ref{000}, that $H^2(M^{\otimes p}_L \otimes K \otimes B^{\otimes r-2}) = 0$.

Hence the above map is surjective and we can similarly capture $(r-1)$ copies of $B$. Then we have the map: 

$$H^0(M^{\otimes p}_L \otimes K \otimes B^{\otimes 2r-1}) \otimes H^0(K \otimes B) \rightarrow H^0(M^{\otimes p}_L \otimes L \otimes L).$$

Again we use CM lemma. First, $H^1(M^{\otimes p}_L \otimes B^{\otimes 2r-2}) = 0$, by 
Proposition \ref{main41}(1). We require $2r-2 \ge n+p+1$ and this is clear. Next,
$H^2(M^{\otimes p}_L \otimes K^{-1} \otimes B^{\otimes 2r-3}) =0$, by  
Proposition \ref{-K1}(2). The required inequality is $2r-3 > 2n+p-1$, which is also clear.
\end{proof}

\begin{remark} 
{\rm We can interpret these results in terms of regularity of the line bundle $B$: Let $X$ be a minimal smooth surface of general type and let $B$ be a base point free ample divisor such that $K+B$ is base point free. Suppose that the regularity of $B$ is $n+1$ for some integer $n \ge 2$. Let 
$L_r = K+rB$.

Then $L_r$ satisfies the $N_p$ property for $r \ge n+p+2$ if $(n+1)B-2K$ is nef and 
$L_r$ satisfies the $N_p$ property for $r \ge 2n+p+1$ if $nB-K$ is nef.
}
\end{remark}
\begin{remark} 
{\rm
In Proposition \ref{main4}, for statement (1) to hold (i.e., $H^1(M_L^{\otimes p} \otimes K \te B^{\te m}) = 0$) we require that \textit{both} $r$ and $m$ should be at least $n+p+1$. But to establish $N_p$ property, we need $H^1(M_L^{\otimes p+1} \otimes K \te B^{\te r}) = 0$.  So in our main result Theorem \ref{main5}, we needed to assume $r \ge n+p+1+1= n+p+2$.

On the other hand, in Proposition \ref{main41}, statement (1) holds (i.e., $H^1(M_L^{\otimes p} \otimes K \te B^{\te m}) = 0$) when $r \ge 2n+p+1$ and $m \ge n+p+1$. In other words, $m$ can be smaller than $r$. 
As a consequence of this observation, in our main result Theorem \ref{main51}, we only needed 
$r \ge 2n+p+1$.

We can improve Theorem \ref{main5} by only requiring $r \ge n+p+1$, if $X$ is regular. This is done below in Theorem \ref{main6}.
}
\end{remark}
\subsection{Regular Surfaces}\label{reg2}
In this subsection, we will assume that $X$ is a regular surface. We will improve Theorem \ref{main5}, using the reduction to curves to prove surjectivity of multiplications maps. This is analogous to the cases of $N_0$ and $N_1$ treated earlier in Section \ref{reg1}.  


First we have the following result improving Proposition \ref{main4}, by only requiring that 
$m \ge n+p$ for $H^1$ vanishing:
\begin{proposition}\label{5.1} Let $X$ be a minimal smooth regular surface of general type 
with $p_g \ge 3$ and $B$ be a base point free, ample divisor such that $B^2 \ge 5$. Let $n \ge 2$. Assume that $(n+1)B-2K$ is nef and $H^1((n+1)B)=0$.
Let $L = K+rB$.
Then  $H^1(M^{\otimes p}_L \otimes K \otimes B^{\otimes m}) = 0$, for $p \geq 1$, $r \ge n+p+1$ and $m \ge n+p$.
\end{proposition}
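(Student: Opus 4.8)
The plan is to prove this by induction on $p$, following the structure of the proof of Proposition \ref{main4} but replacing the use of the CM lemma with a reduction-to-curves argument exactly where Proposition \ref{main4} needed $m \ge n+p+1$. The key vanishing $H^1(M^{\otimes p}_L \otimes K \otimes B^{\otimes m}) = 0$ will follow from two ingredients: first, $H^1(M^{\otimes p-1}_L \otimes K \otimes B^{\otimes m}) = 0$, which will come from the induction hypothesis (and the base case $p=1$ is covered by Proposition \ref{main4}(1) together with K-V vanishing, since $m \ge n+1$); and second, the surjectivity of the multiplication map
$$H^0(M^{\otimes p-1}_L \otimes K \otimes B^{\otimes m}) \otimes H^0(L) \rightarrow H^0(M^{\otimes p-1}_L \otimes K \otimes B^{\otimes m} \otimes L).$$
As usual I would use Lemma \ref{horace} to break $L = K+rB$ into $r-1$ copies of $B$ followed by one copy of $K \otimes B$. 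Capturing the $r-1$ copies of $B$ via the CM lemma requires $H^1$ and $H^2$ vanishings of the form $H^1(M^{\otimes p-1}_L \otimes K \otimes B^{\otimes m-1+j}) = 0$ and $H^2(M^{\otimes p-1}_L \otimes K \otimes B^{\otimes m-2+j}) = 0$; these hold by the induction hypothesis for the $H^1$ statements (note $m-1 \ge n+p-1 = n+(p-1)$, so the weaker bound we are proving is exactly what is needed) and by Proposition \ref{main4}(2) for the $H^2$ statements (which only needs $m \ge n+p$, hence $m-2 \ge n+p-2 = n+(p-1)-1$, still fine).

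The crux is the last map, after absorbing the $B$'s:
$$H^0(M^{\otimes p-1}_L \otimes K \otimes B^{\otimes m+r-1}) \otimes H^0(K \otimes B) \rightarrow H^0(M^{\otimes p-1}_L \otimes K^{\otimes 2} \otimes B^{\otimes m+r}).$$
This is where Proposition \ref{main4} invoked the CM lemma and paid the price of needing $m \ge n+p+1$ (the relevant $H^2$-term, or rather the Serre-dual effectivity argument, forced the stronger bound). Instead I would restrict to a smooth curve $C \in |K+B|$, which is legitimate since $K+B$ is base point free (here I use $B^2 \ge 5$ via Remark \ref{B^2}, and $p_g \ge 3$, $K^2 \ge 2$ which follow from $B^2 \ge 5$ and Lemma \ref{1.2}-type inequalities, though I should double-check that $K^2 \ge 2$ is automatic or needs to be assumed). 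Using Lemma \ref{redcurves} with $E = M^{\otimes p-1}_L \otimes K \otimes B^{\otimes m+r-1}$ and $L_{\mathrm{aux}} = K+B$: I need $H^1(E \otimes (K+B)^{-1}) = H^1(M^{\otimes p-1}_L \otimes B^{\otimes m+r-2} \otimes K^{-1}) = 0$, which holds by Proposition \ref{-K}(1) provided $m+r-2 > \tfrac{3(n+1)}{2}+p$; since $m \ge n+p$ and $r \ge n+p+1$ this reduces to a clean numerical inequality ($2n/2 + 2p - 3 > 3/2$, roughly), which holds. Then it remains to prove surjectivity of the restricted map on $C$.

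On the curve $C$, the bundles involved are of the form $M^{\otimes p-1}_{L}|_C$ twisted by line bundles, and $M_L|_C$ relates to the syzygy bundle of the restriction; I would either invoke Lemma \ref{gp} to reduce further to a statement on a reduced irreducible member, or invoke Proposition \ref{butler} after checking semistability of the relevant restricted bundles and the two slope inequalities, using $\mu$-computations analogous to those in the proof of Theorem \ref{1} — crucially using $h^1$ of the twisting bundle equals $p_g \ge 3$ to get the extra slack in condition (2) of Proposition \ref{butler}. I expect the \textbf{main obstacle} to be controlling the semistability and slope of $M^{\otimes p-1}_L|_C$ (tensor powers of a restricted syzygy bundle are not obviously semistable), so the cleaner route is likely to mimic the inductive reduction already packaged in Lemmas \ref{redcurves}, \ref{gp} and Proposition \ref{butler} rather than argue semistability of $M^{\otimes p-1}_L|_C$ directly; getting the bookkeeping of the numerical hypotheses ($m \ge n+p$, $r \ge n+p+1$, $p_g \ge 3$, $B^2 \ge 5$) to line up with the inequalities demanded by these lemmas at every inductive step is the delicate part.
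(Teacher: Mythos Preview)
Your strategy is viable but diverges tactically from the paper's. You place the reduction-to-curves step at the \emph{end} (absorbing the last factor $K+B$, restricting to $C \in |K+B|$); the paper places it at the \emph{beginning} (absorbing the first copy of $B$, restricting to $C \in |B|$, for $p \ge 3$) and then handles the final $K+B$ step purely by the CM lemma together with Propositions~\ref{main4} and~\ref{-K}. The paper also splits into cases, treating $p=1$ via Theorem~\ref{1}, $p=2$ entirely by the CM lemma, and $p \ge 3$ via the $|B|$-restriction. Your worry about the Butler inequalities on $C \in |K+B|$ can be resolved: intersecting the nef class $(n+1)B-2K$ with the nef divisor $K$ gives $K^2 \le \tfrac{n+1}{2}\,K\cdot B$, and together with $B^2 \ge 5$ and $m+r \ge 2n+2p+1$ this is enough to verify both slope conditions (and $h^1((K+B)|_C)=p_g\ge 3$ gives extra slack, as in Theorem~\ref{1}). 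By contrast the paper's choice $C \in |B|$ only needs the weaker inequality $K\cdot B \le nB^2$ and never has to bound $K^2$ from above, so its numerics are cleaner.

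Two small fixes in your write-up: your invocation of Proposition~\ref{main4}(2) for $H^2(M^{\otimes p-1}_L \otimes K \otimes B^{\otimes m-2})=0$ is off by one (that proposition as stated needs the $B$-exponent $\ge n+(p-1)$, not $n+(p-1)-1$), though the vanishing itself does hold by unwinding via Remark~\ref{000} down to $H^2(K\otimes B^{\otimes m-2})=0$; and in Lemma~\ref{redcurves} the twist $E \otimes (K+B)^{-1}$ equals $M^{\otimes p-1}_L \otimes B^{\otimes m+r-2}$ with no $K^{-1}$, so the relevant vanishing comes from Proposition~\ref{main4}(1) (case $l=0$) rather than Proposition~\ref{-K}(1). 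Finally, $K^2 \ge 2$ is automatic from $p_g \ge 3$ by Noether's inequality $K^2 \ge 2p_g - 4$.
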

\begin{proof} 
Since $B^2 \ge 5$, by Remark \ref{B^2}, $K+B$ is base point free.
 
We proceed by induction on $p$. When $p = 1$, it suffices to prove that the following map is surjective:
$$H^0(K+rB) \otimes H^0(K+mB) \rightarrow H^0(2K+(r+m)B) .$$
We will use Theorem \ref{1}. The condition on $H^1$ vanishing follows from Lemma \ref{1.1}. The desired inequality is easy:
Since $r \geq n+2$ and $m \ge n+1$, we have
$\frac{r+m-2}{n^2}+\frac{r+m-4}{n} \geq  \frac{2n+1}{n^2}+\frac{2n-1}{n} = \frac{2n^2+n+1}{n^2} \geq 2$.
 
When $p =2$, we prove that the following map is surjective:
$$H^0(M_L \otimes K \te B^{\te m}) \otimes H^0(K \te B^{\te r}) \rightarrow H^0(M_L \otimes K^{\te 2} \te B^{\te m+r}).$$
We first consider the following map:
$$H^0(M_L \otimes K \te B^{\te m}) \otimes H^0(B) \rightarrow H^0(M_L \otimes K \te B^{\te m+1}).$$
To see this map surjects, we will apply CM lemma. $H^1$ vanishes because we already proved the proposition for $p=1$. To prove $H^2(M_L \otimes K \te B^{\te m-2}) = 0$, notice that 
$H^1(L \te K \te B^{\te m-2}) = 0$ and $H^2(K \te B^{\te m-2}) = 0$.

After absorbing $(r-1)$ copies of $B$, we have 
$$H^0(M_L \otimes K \te B^{\te m+r-1}) \otimes H^0(K \te B) \rightarrow H^0(M_L \otimes K^{\te 2} \te B^{\te m+r}). $$
To prove this map surjects, we will again use CM lemma. $H^1$ vanishing follows from 
Proposition  \ref{main4}. To prove $H^2(M_L \otimes K^{ -1} \te B^{\te m+r-3}) = 0$,  we use 
Proposition \ref{-K}(2).

Now let $p \geq 3$. Assume that the proposition is true for $p-1$. Let $r \ge n+p+1$ and $m \ge n+p$.  It is enough to prove that the following map is surjective:
$$H^0(M^{\otimes p-1}_L \otimes K \otimes B^{\otimes m}) \otimes H^0(K \otimes B^{\otimes r}) \rightarrow H^0(M^{\otimes p-1}_L \otimes K^{\otimes 2} \otimes B^{\otimes m+r}).$$ First step is to prove that the following map is surjective:
$$H^0(M^{\otimes p-1}_L \otimes K \otimes B^{\otimes m}) \otimes H^0(B) \rightarrow H^0(M^{\otimes p-1}_L \otimes K  \otimes B^{\otimes m+1}).$$

Let $C \in |B|$ be a smooth curve. $H^1(M^{\otimes p-1}_L \otimes K \otimes B^{\otimes m-1})=0$ by induction.  
By Proposition \ref{redcurves}, it is enough to show that the following map is surjective:
$$H^0(M^{\otimes p-1}_L \otimes K \otimes B^{\otimes m} \otimes \mathcal{O}_C) \otimes H^0(B\otimes \mathcal{O}_C) \rightarrow H^0(M^{\otimes p-1}_L \otimes K  \otimes B^{\otimes m+1} \otimes \mathcal{O}_C).$$ This map is surjective if the following map is surjective, by Proposition \ref{gp}:
$$H^0(M^{\otimes p-1}_{L\otimes \mathcal{O}_C} \otimes K \otimes B^{\otimes m} \otimes \mathcal{O}_C) \otimes H^0(B\otimes \mathcal{O}_C) \rightarrow H^0(M^{\otimes p-1}_{L\otimes \mathcal{O}_C} \otimes K  \otimes B^{\otimes m+1} \otimes \mathcal{O}_C).$$ Note that $H^1(L -B) = 0$.
Now we use Theorem \ref{butler}.  
Let $F = M^{\otimes p-1}_{L\otimes \mathcal{O}_C} \otimes K \otimes B^{\otimes m} \otimes \mathcal{O}_C$ and $E = B \otimes \mathcal{O}_C$.   $E$ is base point free. 
Let $g = genus(C)$. We have $2g-2 = (K+B) \cdot B$.
We show that 
\begin{enumerate}
\item $F$ is semistable;
\item $\mu(F) > 2g$;
\item $\mu(F) > 4g-deg(E)-2h^1(E)$.
\end{enumerate}

By Theorem 1.2 \cite{1}, $M_{L\otimes \mathcal{O}_C}$ is semistable if $deg(L \otimes \mathcal{O}_C) \ge 2g$. Since  
$deg(L \otimes \mathcal{O}_C) = (K+rB) \cdot B$ and  $2g = (K+B) \cdot B +2$, the required condition is equivalent to $(r-1)B^2 \ge 2$. This definitely holds because $r \ge n+p+1$.  
It is known that tensor product of semistable vector bundles on a curve is also semistable. So it follows that $F$ is semistable. 

Theorem 1.2 \cite{1} also gives: 
$\mu(M_{L\otimes \mathcal{O}_C}) \geq -2$. Hence $\mu(M^{\otimes p-1}_{L\otimes \mathcal{O}_C}) \geq -2(p-1)$ and
$$\mu(F)  \ge -2(p-1)+ (K+mB) \cdot B.$$ Thus (2) is equivalent to $(m-1)B^2 > 2+2(p-1) = 2p$. This holds because $B^2 \geq 5$, $n \ge 2$ and $m \ge n+p$. 

Since $deg(E) = B^2$, and $4g = [(2K+2B)\cdot B]+4$, (3) is equivalent to 
$$(K+mB)\cdot B - 2(p-1) > (2K+2B)\cdot B +4-B^2-2h^1(E).$$ This in turn is equivalent to 

$$(m-1)B^2 - B\cdot K > 2p+2-2h^1(E).$$ 

Since $nB^2 \geq B \cdot K$, 
$(m-1)B^2 - B\cdot K \ge  (m-1)B^2 - nB^2 = (m-n-1)B^2$.  As $B^2 \ge 5, (m-n-1)B^2 \ge 5(m-n-1)$. By hypothesis, $m \geq n+p$. So 
$5(m-n-1) \ge 5(p-1)$ and the required inequality follows if we show $5(p-1) > 2p+2 -2h^1(E)$. Since $h^1(E) \ge 0$, we have to prove $5(p-1) > 2p+2$.
This is clear because $p \ge 3$. 

After gathering $r-1$ copies of $B$, we have the following morphism:
$$H^0(M^{\otimes p-1}_L \otimes K \otimes B^{\otimes m+r-1}) \otimes H^0(B\te K) \rightarrow H^0(M^{\otimes p-1}_L \otimes K^{\te 2}  \otimes B^{\otimes m+r}).$$

We use CM lemma: $H^1(M^{\otimes p-1}_L  \otimes B^{\otimes m+r-2}) = 0$, by 
Proposition \ref{main4}. We require that $m+r-2 \ge n+p$, and this is clear because by 
hypothesis  $m \ge n+p$ and $r \ge n+p+1$.
Similarly
$H^2(M^{\otimes p-1}_L \otimes K^{-1} \otimes B^{\otimes m+r-3}) = 0$, by 
Proposition \ref{-K}. 
\end{proof}

Now we prove our main theorem: 

\begin{theorem}\label{main6}
Let $X$ be a minimal smooth regular surface of general type with $p_g \ge 3$ and let $B$ be a base point free, ample divisor such that $B^2 \ge 5$. Let $n \ge 2$. Suppose that $(n+1)B-2K$ is nef and $H^1((n+1)B)=0$.  Then $K+rB$ has the $N_p$ property for $r \ge n+p+1$.
\end{theorem}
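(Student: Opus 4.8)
By Theorem~\ref{np} (in the base-point-free version), it suffices to prove $H^1(M_L^{\otimes a} \otimes L^{\otimes b}) = 0$ for all $1 \le a \le p+1$ and all $b \ge 1$. By Lemma~\ref{horace} applied to $E = M_L^{\otimes a} \otimes L$ and the decomposition $L^{\otimes b} = L \otimes B^{\otimes (b-1)(r)}\cdots$ — more precisely, absorbing copies of $B$ one at a time using CM lemma (Lemma~\ref{key}) with $E = B$ — one reduces the required vanishings for all $b \ge 1$ to the single case $b = 1$, i.e.\ to showing $H^1(M_L^{\otimes a} \otimes L) = 0$ for $1 \le a \le p+1$. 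This is the standard reduction already used in the proof of Theorem~\ref{main51}.

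So the real content is: for $r \ge n+p+1$, show $H^1(M_L^{\otimes a} \otimes K \otimes B^{\otimes r}) = 0$ for every $a$ with $1 \le a \le p+1$. The case $a \le p$ is immediate from Proposition~\ref{5.1}: indeed Proposition~\ref{5.1} gives $H^1(M_L^{\otimes a} \otimes K \otimes B^{\otimes m}) = 0$ whenever $r \ge n+a+1$ and $m \ge n+a$, and since $r \ge n+p+1 \ge n+a+1$ and $m = r \ge n+p+1 \ge n+a$, both hypotheses hold. The one genuinely new case is $a = p+1$, where Proposition~\ref{5.1} would demand $r \ge n+(p+1)+1 = n+p+2$, one more than we are allowing; this is precisely the gain that regularity buys us and it is the crux of the argument.

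To handle $a = p+1$ with only $r \ge n+p+1$: apply Proposition~\ref{5.1} with the exponent $p+1$ in place of $p$, using $r' := r \ge n+p+1 = n+(p+1)$ as the ``$m$''-parameter — that hypothesis ($m \ge n+(p+1)$) is met — but the ``$r \ge n+(p+1)+1$'' hypothesis is not. So instead I will not quote Proposition~\ref{5.1} directly but rerun its inductive mechanism one step further. Concretely, $H^1(M_L^{\otimes p+1} \otimes L) = 0$ will follow if the multiplication map
\[
H^0(M_L^{\otimes p}\otimes K \otimes B^{\otimes r}) \otimes H^0(K \otimes B^{\otimes r}) \longrightarrow H^0(M_L^{\otimes p} \otimes K^{\otimes 2} \otimes B^{\otimes 2r})
\]
is surjective, together with $H^1(M_L^{\otimes p} \otimes K \otimes B^{\otimes r}) = 0$ (which holds by Proposition~\ref{5.1} as noted). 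For the surjectivity, absorb $r-1$ copies of $B$ by Lemma~\ref{horace}, each step being either a CM-lemma step (for the first few, using Proposition~\ref{main4} for the $H^1$ and Proposition~\ref{-K} for the $H^2$) or a reduction-to-curves step exactly as in the proof of Proposition~\ref{5.1}: restrict to a smooth $C \in |B|$, use Lemma~\ref{redcurves} and Lemma~\ref{gp} to pass to $M_{L\otimes\mathcal{O}_C}$, and apply Proposition~\ref{butler} with $F = M_{L\otimes\mathcal{O}_C}^{\otimes p}\otimes K \otimes B^{\otimes m}\otimes\mathcal{O}_C$ and $E = B\otimes\mathcal{O}_C$. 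The semistability of $F$ comes from \cite[Theorem 1.2]{1} ($M_{L\otimes\mathcal{O}_C}$ is semistable since $\deg(L\otimes\mathcal{O}_C)=(K+rB)\cdot B \ge 2g$) together with semistability of tensor products; the slope inequalities $\mu(F) > 2g$ and $\mu(F) > 4g - \deg(E) - 2h^1(E)$ reduce, via $\mu(M_{L\otimes\mathcal{O}_C}) \ge -2$ and $B^2 \ge 5$, $n \ge 2$, $m \ge n+p+1$, $B\cdot K \le nB^2$, to elementary numerical checks of the same flavor as in Proposition~\ref{5.1}. Finally the last step, capturing $K \otimes B$, uses CM lemma with $H^1(M_L^{\otimes p}\otimes B^{\otimes 2r-2}) = 0$ (Proposition~\ref{main4}) and $H^2(M_L^{\otimes p}\otimes K^{-1}\otimes B^{\otimes 2r-3}) = 0$ (Proposition~\ref{-K}(2)), whose numerical hypotheses are clear since $r \ge n+p+1$.

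**Main obstacle.** The delicate point is the very last curve-reduction step where the exponent on $M_L$ is $p$ (not $p-1$) while $m$ is only as large as $n+p+1$ rather than $n+p+2$: one must verify that the Butler-type slope inequalities still hold in this tighter regime — in particular that $(m-1)B^2 - B\cdot K > 2(p+1)+2 - 2h^1(E)$ survives with the worst-case bounds $B\cdot K \le nB^2$ and $B^2 \ge 5$ and $m \ge n+p+1$. This is where the hypotheses $p_g \ge 3$ (so $h^1(E) \ge 3$) and $B^2 \ge 5$ are used essentially, and checking that the margin does not collapse is the heart of why regularity improves the bound from $n+p+2$ to $n+p+1$. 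I would isolate this computation carefully, as it is the one place where the gain is not free.
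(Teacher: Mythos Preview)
Your overall strategy matches the paper's, but the paper's execution is simpler than you anticipate, and your identification of the ``main obstacle'' is off the mark. After reducing to $H^1(M_L^{\otimes p+1}\otimes L)=0$ and hence to surjectivity of
\[
H^0(M_L^{\otimes p}\otimes L)\otimes H^0(L) \longrightarrow H^0(M_L^{\otimes p}\otimes L^{\otimes 2}),
\]
the paper handles \emph{every} $B$-absorption step by the CM lemma alone: the needed $H^1$ vanishing is supplied by Proposition~\ref{5.1} itself (which gives $H^1(M_L^{\otimes p}\otimes K\otimes B^{\otimes m})=0$ already for $m\ge n+p$, so in particular for $m=r-1$), and the $H^2$ vanishing by Proposition~\ref{main4}(2) together with Remark~\ref{000}. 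No further reduction to curves is carried out inside the proof of Theorem~\ref{main6}; all of the Butler work is already packaged in Proposition~\ref{5.1}.

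Your suggested citation of Proposition~\ref{main4} for the $H^1$ in the first CM step is precisely what fails: Proposition~\ref{main4}(1) requires $m\ge n+p+1$, but at the first absorption $m=r-1$ may equal only $n+p$. That single unit of slack is exactly what Proposition~\ref{5.1} buys, and \emph{that} is the true crux of why regularity improves the bound from $n+p+2$ to $n+p+1$ --- not a fresh Butler computation at exponent $p$. Your proposed inline Butler argument is in fact redundant: to invoke Lemma~\ref{redcurves} you would already need $H^1(M_L^{\otimes p}\otimes K\otimes B^{\otimes r-1})=0$, which is Proposition~\ref{5.1} again, and once you have that the CM lemma finishes directly. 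Also, your claim that $h^1(B|_C)\ge 3$ for $C\in|B|$ via $p_g\ge 3$ is not justified here (the computation $h^1(E)=p_g$ in Theorem~\ref{1} used $C\in|K+B|$, not $|B|$); the paper's proof of Proposition~\ref{5.1} uses only $h^1(E)\ge 0$.
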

\begin{proof} We proceed by induction on $p$. 
Corollaries \ref{n0_4} and \ref{n1_3} establish the theorem for $p = 0$ and 1 respectively. It is easy to see that the required hypotheses are true. Note that $B^2 \ge \frac{1}{n}(B\cdot K)$ by Remark \ref{3.5}.

Suppose that $p > 1$. Let $r \ge n+p+1$. 

$L = K+rB$ has $N_p$ property if $H^1({M}^{\otimes a}_L \otimes L^{\otimes b})  = 0$ for $1 \leq a \leq p+1$ and $b \ge 1$.  

Since $r \ge n+p+1$, $L$ has $N_{p-1}$ by induction. So
we only need to show that $H^1({M}^{\otimes p+1}_L \otimes L^{\otimes b})  = 0$ for all $b \ge 1$. We will show this only for $b=1$ as the other cases follow from Remark \ref{n0n1}.

Since $H^1(M^{\otimes p}_L \otimes L)  = 0$, it is enough to show that the following map is surjective:
$$H^0(M^{\otimes p}_L \otimes L) \otimes H^0(L) \rightarrow H^0(M^{\otimes p}_L \otimes L \otimes L).$$

The first step is to prove the following map is surjective. 
$$H^0(M^{\otimes p}_L \otimes L) \otimes H^0(B) \rightarrow H^0(M^{\otimes p}_L \otimes L \otimes B).$$
We use the CM lemma. $H^1(M^{\otimes p}_L \otimes K \otimes B^{\otimes r-1}) = 0$ by Proposition \ref{5.1}. 
To prove that 
$H^2(M^{\otimes p}_L \otimes K \otimes B^{\otimes r-2}) = 0$, note that $H^1(M^{\otimes p-1}_L \otimes L \otimes  K \otimes B^{\otimes r-2}) = 0$ by Proposition \ref{main4}(1) and $H^2(M^{\otimes p-1}_L \otimes  K \otimes B^{\otimes r-2}) = 0$ by Proposition \ref{main4}(2). Note that $r-2 \ge n+p-1$.

After  similarly absorbing $(r-1)$ copies of $B$ we are left with the following map:
$$H^0(M^{\otimes p}_L \otimes K \otimes B^{\otimes 2r-1}) \otimes H^0(K \otimes B) \rightarrow H^0(M^{\otimes p}_L \otimes L \otimes L).$$

We will prove this map is surjective using the CM lemma. $H^1(M^{\otimes p}_L \otimes B^{\otimes 2r-2}) = 0$ by Proposition \ref{main4}(1). We require 
$2r-2 \ge n+p+1$. This is clear because $r \ge n+p+1$.  $H^2(M^{\otimes p}_L \otimes K^{-1}  \otimes B^{\otimes 2r-3}) = 0 $, by Proposition \ref{-K}(2). We require 
$2r-3 > \frac{3(n+1)}{2} + p-1$. This is also clear. 
\end{proof}

\section{Examples}  \label{examples}

\begin{example}\label{5.1}{\rm  Let $\phi: S \rightarrow \mathbb{P}^2$ be  a double cover branched along a smooth curve of degree $10$ in $\mathbb{P}^2$. Then $S$ is a smooth minimal regular surface of general type. 

We have $K_S = {\phi}^{\star}(\mathcal{O}_{\mathbb{P}^2}(2))$. Set $B = {\phi}^{\star}(\mathcal{O}_{\mathbb{P}^2}(1))$. Then $B$ is base point free, ample and $H^1(B) = 0$. $B^2 = 2$ and $B \cdot K = 4$. 

It can be checked that $K_S + 2B$ does not satisfy $N_0$ property, but $K_S + 3B$ does. This illustrates Corollary \ref{n0_4}. }
\end{example}

\begin{example}\label{5.2}{\rm
Let $\phi: S \rightarrow \mathbb{P}^2$ be  a triple cover branched along a smooth curve of degree $9$ in $\mathbb{P}^2$. Again $S$ is a smooth minimal regular surface of general type. 

We have $K_S = {\phi}^{\star}(\mathcal{O}_{\mathbb{P}^2}(3))$. Set $B = {\phi}^{\star}(\mathcal{O}_{\mathbb{P}^2}(1))$. Then $B$ is base point free, ample and $H^1(lB) = 0$ for all $l \ge 1$. Also, $B^2 = 3$ and $B \cdot K = 9$. 

Now $K_S + 2B$ does not satisfy $N_1$ property. But according to Corollary \ref{n1_3}, $K_S+5B$ does. }
\end{example}

The above two examples are studied in Section 5 of \cite{2}. Refer to it for more details. 

\begin{example}\label{5.3} This example shows that there are infinitely many instances when the hypotheses of Theorem \ref{main6} are satisfied. Moreover, the hypothesis that 
$(n+1)B-2K$ is nef will hold in a strict sense in these examples: namely, 
$(n+1)B-2K$ is nef but $nB-2K$ is not nef. 


{\rm
Let $n \ge 2$. We construct a surface $X$ such that the following conditions hold: 
\begin{itemize}
\item $X$ is a minimal smooth regular surface of general type with $p_g \ge 3$.
\item $B$ is a base point free and ample line bundle on $X$ such that 
$B^2 \ge 5$ and $H^1((n+1)B)=0$.
\item $(n+1)B-2K$ is nef and $nB-2K$ is not nef. 
\end{itemize}

Let $S$ denote the Hirzebruch surface $\mathbb{F}_1$. Namely,  $\pi: S \rightarrow \mathbb{P}^1$ is the projective space bundle associated to the locally free sheaf of rank two $\mathcal{O}_{\mathbb{P}^1} \oplus \mathcal{O}_{\mathbb{P}^1} (-1)$. 
Let $C_0$ denote a section of $S$ that represents the tautological line bundle 
$\mathcal{O}_S(1)$ and let $f$ denote a fiber. 
Then $K_S = -2C_0-3f$. 

We have 
${\pi}_{\star}(\mathcal{O}_S) = \mathcal{O}_{\mathbb{P}^1}$, 
${\pi}_{\star}(\mathcal{O}_S(f)) = \mathcal{O}_{\mathbb{P}^1}(1)$.
Further,  for $a \ge 1$, 
$${\pi}_{\star}(\mathcal{O}_S(aC_0)) =  {Sym}^a\big{(}\mathcal{O}_{\mathbb{P}^1}\oplus \mathcal{O}_{\mathbb{P}^1}(-1)\big{)} = 
\mathcal{O}_{\mathbb{P}^1}\oplus \mathcal{O}_{\mathbb{P}^1}(-1) \oplus \mathcal{O}_{\mathbb{P}^1}(-2) \oplus \ldots \oplus \mathcal{O}_{\mathbb{P}^1}(-a).$$
Now, for some for  $m \geq 1$, let
$\phi: X \rightarrow S$ be a double cover branched along a smooth curve in 
the very ample linear system 
$|6C_0+2(m+3)f|$ on $S$. 

Then $ {\phi}_{\star}(\mathcal{O}_X) = \mathcal{O}_S \oplus \mathcal{O}_S(-3C_0-(m+3)f)$.  
We have that $K = K_X = {\phi}^{\star}(C_0+mf)$. Let $B = {\phi}^{\star}(C_0+bf)$ for some $b > 1$.  

Our goal is to choose $m, b$ appropriately so that the above conditions hold. 

Note that $C_0^2 = -1, C_0 \cdot f = 1$ and $f^2 = 0$. So $B^2 = 2(-1+2b)$ and $B \cdot K = 2(-1+b+m)$.

Then $X$ is a minimal smooth surface of general type.  
We see easily that $B$ is ample and base point free, $B^2 \geq 5$ and $K+B$ is base point free. 

Claim: Let $a \ge 0$. Then $H^1(\mathcal{O}_S(aC_0+bf))=0$ if $b > a-2$.

Proof: By the projection formula, $H^1(\mathcal{O}_S(aC_0+bf)) = H^1\big{(}\mathcal{O}_{\mathbb{P}^1}(b)\oplus \mathcal{O}_{\mathbb{P}^1}(b-1) \oplus \ldots 
\oplus \mathcal{O}_{\mathbb{P}^1}(b-a)\big{)} $. This is zero if $b-a > -2$. So the claim follows.

\underline{$X$ is regular}: $H^1(\mathcal{O}_X) = H^1(\mathcal{O}_S) \oplus 
H^1\big{(}\mathcal{O}_S(-3C_0-(m+3)f)\big{)}$. 
By Serre duality, 
$H^1\big{(}\mathcal{O}_S(-3C_0-(m+3)f)\big{)} = H^1\big{(}\mathcal{O}_S(C_0+mf)\big{)}.$
This is zero by the claim. Hence 
$H^1(\mathcal{O}_X) = H^1(\mathcal{O}_{\mathbb{P}^1})  = 0$. 

\underline{$p_g(X) \ge 3$}: 
${\phi}_{\star}(K_X) ={\phi}_{\star}{\phi}^{\star}(C_0+mf) = \mathcal{O}_S(C_0+mf) \oplus \mathcal{O}_S(-2C_0-3f)$ 
and ${\pi}_{\star}(\mathcal{O}_S(C_0+mf)) = \mathcal{O}_{\mathbb{P}^1}(m)\oplus  \mathcal{O}_{\mathbb{P}^1}(m-1)$ . 
As $m \ge1$, 
$h^0(\mathcal{O}_{\mathbb{P}^1}(m)) \ge 3$. 
Since $h^0(X,K_X) = h^0(S, {\phi}_{\star}{\phi}^{\star}(C_0+mf)) \ge h^0(\mathcal{O}_{\mathbb{P}^1}(m))$, we conclude that $p_g \ge 3$. 

We now verify that $H^1(X, (n+1)B)=0$. 

$H^1(X, (n+1)B) = H^1\big{(}\mathcal{O}_S((n+1)C_0+((n+1)b)f)\big{)}\oplus H^1\big{(}\mathcal{O}_S((n-2)C_0+((n+1)b-m-3)f\big{)}$.



By the lemma, $H^1\big{(}\mathcal{O}_S((n+1)C_0+((n+1)b)f)\big{)}=0$ if $(n+1)b > n+1-2$. This is obvious. 

$H^1\big{(}\mathcal{O}_S((n-2)C_0+((n+1)b-m-3)f\big{)}=0$ if $(n+1)b-m-3> n-4$. 

Thus $H^1((n+1)B)=0$ if $m < (n+1)b-n+1$.

Let $D = s C_0 + t f$ be a divisor on $S$, with $s > 0$. If $D$ is nef, then $D \cdot C_0 \geq 0$, which gives $t \ge s$. On the other hand, if $t > s$, then $D$ is ample, hence nef. 
Further, $D$ is nef if and only if ${\phi}^{\star}(D)$ is nef. This follows from projection formula, and the observation that, for a curve $C$ in $S$, 
$D \cdot C =  2{\phi}^{\star}(D) \cdot {\phi}^{\star}(C)$.

$$(n+1)B-2K = {\phi}^{\star}\big{(}(n-1)C_0+((n+1)b-2m)f\big{)}.$$ This is nef if $(n+1)b-2m > n-1$. 
Similarly 
$$nB-2K = {\phi}^{\star}\big{(}(n-2)C_0+(nb-2m)f\big{)}.$$ This is not nef if $nb-2m < n-2$.

Thus we have the following three inequalities:
\begin{enumerate}
\item $m <  (n+1)b-n+1$;
\item $(n+1)b-2m > n-1 \Leftrightarrow m < (\frac{n+1}{2})b+\frac{1-n}{2}$;
\item $nb-2m < n-2 \Leftrightarrow m > \frac{n}{2}b+\frac{2-n}{2}$.
\end{enumerate} 

Thinking of $b$ as the horizontal and $m$ as vertical axis, these three inequalities represent the appropriate half planes determined by three lines in the $(b,m)$-plane. Denoting these lines $l_1, l_2$ and $l_3$, respectively, we observe that their slopes are given by 
$m_1= n+1$, $m_2 = \frac{n+1}{2}$ and $m_3 = \frac{n}{2}$ and we have 
$m_3 \le m_2 \le m_1$.

The solutions to the inequality (1) lie below $l_1$, solutions to inequality (2) lie below line $l_2$ and  
solutions to the inequality (3) lie above $l_3$. Thanks to the inequalities among the slopes, we have infinitely many simultaneous solutions to the three
inequalities. }
\end{example}

\begin{example}\label{5.4}  This example shows that there are infinitely many instances when the hypotheses of Theorem \ref{main51} are satisfied. Moreover, the hypothesis that 
$nB-K$ is nef will hold in a strict sense in these examples: namely, 
$nB-K$ is nef but $(n-1)B-K$ is not nef. 

{\rm
Let $n \ge 2$. We construct a surface $X$ such that the following conditions hold: 
\begin{itemize}
\item $X$ is a minimal smooth surface of general type.
\item $B$ is a base point free and ample line bundle on $X$ such that 
$K+B$ is base point free and $H^1((n+1)B)=0$.
\item $nB-K$ is nef and $(n-1)B-K$ is not nef. 
\end{itemize}

Let $S =  \mathbb{F}_1$ as in the above example. For some $m \ge 1$, 
let 
$\phi: X \rightarrow S$ be a double cover branched along a smooth curve in 
the very ample linear system 
$|6C_0+2(m+3)f|$ on $S$.  

Then 
$K = K_X = {\phi}^{\star}(C_0+mf)$. Let $B = {\phi}^{\star}(C_0+bf)$ for some $b >1$. Our goal is to choose $m, b$ appropriately so that above conditions hold.  

We see that $X$ is a minimal smooth surface of general type, $B$ is a base point, ample divisor, and $K+B$ is base point free. 

As in Example \ref{5.3},  $H^1((n+1)B)=0$ if $m <  (n+1)b-n+1$.

$nB-K = {\phi}^{\star}((n-1)C_0+(nb-m) f)$ is nef if $nb-m > n-1$ and

$(n-1)B-K = {\phi}^{\star}((n-2)C_0+((n-1)b-m) f)$ is not nef if $(n-1)b-m  < n-2$. 

Thus the required inequalities are:
\begin{enumerate}
\item $m < (n+1)b-n+1$;
\item $nb-m > n-1  \Leftrightarrow m <  nb-n+1$;
\item $(n-1)b-m < n-2 \Leftrightarrow m > (n-1)b-n+2$.
\end{enumerate} 

We may argue as in Example 5.3, noting that since $n+1 > n > n-1$, the slope of line determining (2) is more than the slope of line determining (3). Alternatively, we observe that $(n-1)b-n+2 < nb-n+1 < (n+1)b-n+1$, if $b > 1$. So we can choose a suitable $m$ for any large value of $b$.}
\end{example}

\begin{example}\label{5.5}{\rm We can construct infinitely many examples as above for arbitrary finite covers $X \rightarrow S=\mathbb{F}_1$ of any degree.  }
\end{example}
 
\begin{example}\label{5.7} We construct infinitely many examples such that the hypotheses of Theorem \ref{1} hold and for which general arguments using CM lemma will not yield the results. This is because the vanishing of $H^2$ required in CM lemma does \textit{not} hold. Further, for these examples, we have $B^2 < B\cdot K$. This shows that our conclusions do not follow from results in \cite{2}. 


{\rm 
Let $n,m$ be positive integers such that $n+m$ is even and  $n+m \ge 6$. Note that we are often only interested in the case $n=m \ge 3$. So the conditions on $n,m$ are not serious. We will show that there exist positive integers $a,b$ and a surface $X$ such that the following conditions are satisfied:
\begin{itemize}
\item $X$ is a minimal smooth regular surface of general type with $p_g \ge 3$ and $K^2 \ge 2$.
\item $B$ is a base point free and ample line bundle on $X$ such that 
$B\cdot K > B^2 \ge \frac{a}{b}(B \cdot K)$ and
$$\frac{(n+m-2)a^2}{b^2} + \frac{(n+m-4)a}{b} \geq 2.$$
\item $H^1\big{(}(n+m-2)B\big{)} = 0$.
\item $H^2((n+m-3)B-K_S) \ne 0$.
\end{itemize}



Set $a = 2$ and $b = n+m-2$. By the assumptions on $n,m$, we get that $b \ge 4$. 

Then $$\frac{(n+m-2)a^2}{b^2} + \frac{(n+m-4)a}{b} = \frac{4b}{b^2} + \frac{2(b-2)}{b} = 2.$$ 

So one of the above conditions holds for $a$ and $b$.
We will now construct a surface $X$ for which the other conditions are satisfied. 




Let $S$ denote the Hirzebruch surface $\mathbb{F}_1$. 

Given positive integers $1 \le r < s$, let 
$\phi: X \rightarrow S$ be a double cover branched along a smooth curve $C$ in 
the very ample linear system $|2(r+2)C_0+2(s+3)f|$ on $S$.

Then $K_X = {\phi}^{\star}(rC_0+sf)$. 
Let 
$B = {\phi}^{\star}(C_0+2f)$.

Clearly $X$ is a minimal smooth surface of general type, $B$ is a base point, ample divisor, $K+B$ is base point free. Further, $K^2 \ge 2$, $B^2 = 6$ and $B\cdot K = 2(r+s)$.

For simplicity, set $i = r+2$ and $j = s+3$. 

\underline{$X$ is regular}: $H^1(\mathcal{O}_X) = H^1(\mathcal{O}_S) \oplus 
H^1\big{(}\mathcal{O}_S(-iC_0-jf)\big{)}$.
By Serre duality, 
$H^1\big{(}\mathcal{O}_S(-iC_0-jf)\big{)} = H^1\big{(}\mathcal{O}_S((i-2)C_0+(j-3)f)\big{)}$.
By the claim in Example \ref{5.3}, $H^1\big{(}\mathcal{O}_S((i-2)C_0+(j-3)f)\big{)} = 0$ if 
$j-3 > i-4$. Equivalently, $j-i > -1$, which is clear.  
Hence 
$H^1(\mathcal{O}_X) = H^1(\mathcal{O}_S) = H^1(\mathcal{O}_{\mathbb{P}^1}) = 0$. 

\underline{$p_g \ge 3$}: 
${\phi}_{\star}(K_X) = \mathcal{O}_S(rC_0+sf) \oplus \mathcal{O}_S(-2C_0-3f)$ 
and 
${\pi}_{\star}(\mathcal{O}_S(rC_0+sf)) = \mathcal{O}_{\mathbb{P}^1}(s)\oplus  \mathcal{O}_{\mathbb{P}^1}(s-1) \oplus \ldots \oplus \mathcal{O}_{\mathbb{P}^1}(s-r) $ . 
As $s \ge1$, 
$h^0(\mathcal{O}_{\mathbb{P}^1}(s)) \ge 3$. 
Since $h^0(X,K_X) \ge h^0(S, \mathcal{O}_S(rC_0+sf)) \ge h^0(\mathcal{O}_{\mathbb{P}^1}(s))$, we conclude that $p_g \ge 3$. 

Next we have the following:  
\begin{eqnarray}\label{cond1}
B\cdot K > B^2 \ge \frac{2}{b} (B\cdot K) \Leftrightarrow r+s > 3 \ge \frac{2}{b}(r+s).
\end{eqnarray}
On the other hand, $2K-(b-1)B = {\phi}^{\star}((2r-b+1)C_0+(2s-2b+2)f)$. 

So $H^0(2K-(b-1)B) \ne 0$ if 
\begin{eqnarray}\label{cond2}
2r-b+1 > 0 ~~~~~~~~{\rm~~~ and~~~}~~~~~~~~~ 2s-2b+2>2r-b+1.
\end{eqnarray}
Set $s = b$ and $r = b/2$. Note that $b$ is even.  Then clearly \eqref{cond1} and \eqref{cond2} hold. 

Next, we check that $H^1(bB)=0$. 

We have:
$H^1(X,bB) = H^1(S, bC_0+2bf) \oplus H^1(S, (b-i)C_0+(2b-j)f)).$ By the claim in Example \ref{5.3}, these groups are zero if $2b > b-2$ and $2b-j > b-i-2$. The first inequality is 
obvious. For the second inequality, recall that $i=r+2$ and $j=s+3$. So the required inequality is $b > s-r-1$. This is clear because $s=b$ and $r=b/2$.

Thus we have shown that given any $n,m$ such that $n+m$ is even and at least $6$, there exists a surface $X$ of general type and an ample, base point free divisor $B$ on $X$ and positive integers $a<b$ such that the hypotheses of Theorem \ref{1} are satisfied and $H^2((n+m-3)B-K_X) \ne 0$. 


}
\end{example}
\begin{remark}\label{5.6}{\rm For any surface $X$ and any ample divisor $B$ on $X$, it is well known that given any $p$, $K_X+rB$ has $N_p$ property for sufficiently large $r$. For instance, this follows from the main results of \cite{5} (for any surface) and \cite{2} (for surfaces of general type). Our Theorem \ref{main51} gives another proof of this fact for surfaces of general type. 

Indeed, if $B$ is ample, some multiple of $B$ is very ample, and in particular, base point free. Further $K+mB$ is base point free for large $m$. So replacing $B$ by a large enough power, we can suppose that $B$ and $K+B$ are base point free. Moreover, again since $B$ is ample, we have $H^1(mB) = 0$ and $-K + mB$ is ample (hence nef) for large $m$. So the hypothesis of Theorem \ref{main51} are satisfied for a suitable multiple of $B$. Then we conclude that $K+ rB$ has $N_p$ property for sufficiently large $r$.}
\end{remark}

\bibliographystyle{plain}

\end{document}